\theoremstyle{plain}
\newtheorem{thm}{Theorem}[section]
\newtheorem{lemma}[thm]{Lemma}
\newtheorem{corollary}[thm]{Corollary}
\newtheorem{question}[thm]{Question}
\newtheorem{conjecture}[thm]{Conjecture}
\newtheorem*{subclaim*}{Subclaim}
\newtheorem*{claim*}{Claim}
\theoremstyle{definition}
\newtheorem{definition}[thm]{Definition}
\newtheorem{example}[thm]{Example}
\newcommand{\Alt}{\operatorname{Alt}}
\newcommand{\BD}{\operatorname{BD}}
\newcommand{\rank}{\operatorname{rank}}
\newcommand{\id}{\operatorname{id}}
\newcommand{\Mat}{\operatorname{Mat}}
\newcommand{\GL}{\operatorname{GL}}
\newcommand{\SL}{\operatorname{SL}}
\newcommand{\PSL}{\operatorname{PSL}}
\newcommand{\Aff}{\operatorname{Aff}}
\newcommand{\infin}{\operatorname{infin}}
\newcommand{\comments}[1]{}
\newcommand{\lcm}{\operatorname{lcm}}
\begin{document}
\title[Finite rank dimension groups]{The classification problem for simple unital finite rank dimension groups}\author{Paul Ellis}
\address{Paul Ellis, Department of Mathematics and Computer Science, Manhattanville College, 2900 Purchase Street, Purchase, NY, 10577}
\email{paulellis@paulellis.org}
\urladdr{paullellis.org}

\begin{abstract}
The Borel complexity of the isomorphism problem for finite-rank unital simple dimension groups increases with rank.  This implies that the isomorphism problems for the corresponding classes of Bratteli diagrams and LDA-groups also increase with rank.
\end{abstract}


\maketitle

A dimension group is an unperforated ordered group that satisfies the Reisz Interpolation Property.  In particular, it is a torsion-free abelian group with some added structure.  Hjorth and Thomas proved that the Borel complexity of the isomorphism problem for rank $n$ torsion-free abelian groups increases as $n$ increases.
In this paper, we show how to apply these results to dimension groups and prove that the Borel complexity of the isomorphism problem for simple, unital, rank $n$ dimension groups increases as $n$ increases (Theorem \ref{maintarget}). For a detailed treatment of Borel reducibility, see \cite{kanovei}.

Dimension groups also form a complete invariant for the class of approximately finite-dimensional $C^*$-algebras (AF-algebras), as well as for limits of diagonal embeddings of finite alternating groups (LDA-groups).  The combinatorial structure that underpins this correspondence is a directed graph called a Bratteli diagram.  We will use this to prove that, for a sufficient notion of rank, the Borel complexity of the isomorphism problem for simple, unital, thick, rank $n$ LDA groups also increases as $n$ increases.

In the next section, we give formal definitions of dimension groups, Bratteli diagrams, and LDA-groups, and we explain what is meant by simple, thick, and unital in each case. We then formally state the main theorem for dimension groups.  In Section \ref{Section-Defining the SBSs}, we define the corresponding standard Borel spaces.  
We prove Theorem \ref{maintarget} for low rank dimension groups in Section \ref{section - two borel reductions and two small cases}.  After some discussion of the state space of a simple dimension group in Section \ref{Section-state spaces} and of cocycles in Section \ref{section-cocycles}, we prove Theorem \ref{maintarget} for higher rank dimension groups in Section \ref{section-proof of main theorem}.
Finally, in Section \ref{section-application to LDA groups} we show how to extend Theorem \ref{maintarget} to Bratteli diagrams and LDA-groups.

The author would like to thank Simon Thomas for years of helpful guidance, and Samuel Coskey for many helpful comments.

\section{Bratteli Diagrams, Dimension Groups, and LDA Groups}

\begin{definition}\label{Bratteli diagram}
A \emph{Bratteli diagram} $(V,E,d)$ consists of a vertex set $V$, an edge set $E$, and a labelling $d:V\rightarrow \mathbb{N}\setminus\{0\}$, where $V$ and $E$ can be
written as countable disjoint unions of nonempty finite sets $V=\bigsqcup_{n\geq 0}V_n$ and
$E=\bigsqcup_{n\geq1}E_n$ such that the following conditions hold.
\begin{enumerate}
\item There exist range and source maps $r$, $s$ from $E$ to $V$ such that \mbox{$r[E_n]\subseteq V_n$} and
$s[E_n]\subseteq V_{n-1}$.  Furthermore $s^{-1}(v)\neq\emptyset$
for all $v\in V$.
\item For each $v\in V-V_0$,
\begin{equation}
d(v)\geq \sum_{r(e)=v} d(s(e)).\tag{$\dag$}
\end{equation}
\end{enumerate}
For each $n\geq1$, the edge set $E_n$ determines a corresponding \emph{incidence matrix} $M_n=(m_{u,v})$, with
rows indexed by $V_n$ and columns indexed by $V_{n-1}$, such that $$m_{u,v}=|\{e\in E_n\mid r(e)=u\text{ and } s(e)=v\}|$$ is the number of edges joining $v$ to $u$.
\end{definition}

\begin{definition}\label{Dimension Group}
A \emph{dimension group} $(A, A^+, \Gamma)$ is an abelian group $A$ together with a \emph{positive cone} $A^+\subseteq A$ and a \emph{scale} $\Gamma\subseteq A^+$ satisfying
\begin{enumerate}
\item $A^+ + A^+\subseteq A^+$,
\item $A^+ - A^+ = A$,
\item $A^+\cap (-A^+)=\{0\}$,
\noindent\noindent  we write $a\leq b$ if $b-a\in A^+$
\item If $na\in A^+$ for some $n\in \mathbb{N}^+$, then $a\in A$ ($A$ is unperforated),
\item If $a,b\leq c,d$ then there is some $e\in A$ such that $a,b\leq e\leq c,d$ (Reisz Interpolation Property),
\item If $0\leq a\leq b$ and $b\in \Gamma$, then $a\in \Gamma$,
\item Given $a,b\in \Gamma$, there exists $c\in\Gamma$ so that $a,b\leq c$,
\item For any $a\in A^+$, there are $a_1,\ldots, a_n\in\Gamma$ so that $a=a_1+\ldots +a_n$.

\end{enumerate}
\end{definition}

\noindent (1), (2), (3) make an \emph{ordered group}. We will use the following properties of unperforated order groups:
\begin{itemize}
\item If $a\leq b$ and $c\leq d$, then $a+c\leq b+d$
\item If $na\leq nb$ for some $n\in\mathbb{N}^+$, then $a\leq b$.
\end{itemize}
\begin{example} Suppose $(a_1,\ldots,a_n)\in\mathbb{Z}^n$.  Then $(\mathbb{Z}^n, (\mathbb{Z}^n)^+,\Gamma(a_1,\ldots,a_n) )$
 is a dimension group, where\begin{itemize}
\item $(\mathbb{Z}^n)^+ = \{(z_1,\ldots,z_n)\in\mathbb{Z}^n\mid 0\leq z_i\text{ for all }1\leq i\leq n \}$,
\item For each $1\leq i\leq n$, $a_i\in \mathbb{Z}^+\cup\{\infty\}$
\item $\Gamma(a_1,\ldots,a_n) = \{(z_1,\ldots,z_n)\in (\mathbb{Z}^n)^+ \mid 0\leq z_i < a_i\text{ for all }1\leq i\leq n \}$.
\end{itemize}
\end{example}

To each Bratteli diagram $(V,E,d)$, we associate a dimension group $K_0(V,E,d)$ as follows. For each level of the vertex set $V_n=\{a_1,\ldots,a_{k_n}\}$, we associate the dimension group $(\mathbb{Z}^{k_n}, (\mathbb{Z}^{k_n})^+,\Gamma_n )$, where $\Gamma_n=\Gamma(d(a_1),\ldots,d(a_{k_{n}}))$. Then for each $n\geq 1$, let $\varphi_n:V_n\to V_{n+1}$ be the homomorphism given by matrix multiplication by the incidence matrix $M_n$ from Definition \ref{Bratteli diagram}. Since all the entries of $M_n$ are nonnegative, we see that $\varphi_n[(\mathbb{Z}^{k_{n-1}})^+]\subseteq (\mathbb{Z}^{k_n})^+$.  Then $(\dag)$ ensures that \mbox{$\varphi[\Gamma_{n-1}]\subseteq \Gamma_n$}.  Finally, $K_0(V,E,d)$ is the direct limit of this sequence of dimension groups.  A truly remarkable result of Effros, Handelman, and Shen states that every scaled dimension group can be constructed from a Bratteli diagram in this manner:
\begin{thm}\cite{effros-handelman-shen}\cite[Theorem 7.1]{effros}\label{effros-handelman-shen}
Every scaled dimension group can be expressed as $K_0(B)$ for some Bratteli diagram $B$.
\end{thm}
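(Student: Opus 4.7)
The plan is to realize the given scaled dimension group $(A, A^+, \Gamma)$ as a direct limit $\varinjlim (\mathbb{Z}^{k_n}, (\mathbb{Z}^{k_n})^+, \Gamma_n)$ taken along positive, scale-preserving connecting homomorphisms $\varphi_n \colon \mathbb{Z}^{k_n} \to \mathbb{Z}^{k_{n+1}}$, and then read off a Bratteli diagram directly from this tower. Once the presentation is in hand, each $\varphi_n$ is represented in the standard bases by a $k_{n+1} \times k_n$ matrix $M_{n+1}$ with entries in $\mathbb{N}$, since positivity forces nonnegativity. Taking $V_n$ to be a $k_n$-element vertex set and declaring $E_{n+1}$ to contain $(M_{n+1})_{u,v}$ edges from $v \in V_n$ to $u \in V_{n+1}$ defines $(V,E)$, while writing each $\Gamma_n$ in the form $\Gamma(d(v_1^{(n)}), \ldots, d(v_{k_n}^{(n)}))$ as in the example following Definition~\ref{Dimension Group} defines the labeling $d$. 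A direct computation shows that the inclusion $\varphi_n(\Gamma_n) \subseteq \Gamma_{n+1}$ is equivalent to the inequality $(\dag)$ at each vertex of $V_{n+1}$, so $(V,E,d)$ is a bona fide Bratteli diagram, and by construction its associated dimension group $K_0(V,E,d)$ is isomorphic to $(A, A^+, \Gamma)$.

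To build the tower, enumerate $A = \{a_1, a_2, \ldots\}$ and $\Gamma = \{g_1, g_2, \ldots\}$ and construct positive scale-preserving maps $\phi_n \colon (\mathbb{Z}^{k_n}, (\mathbb{Z}^{k_n})^+, \Gamma_n) \to (A, A^+, \Gamma)$ by induction on $n$, maintaining: (i) $a_1, \ldots, a_n$ lie in the image of $\phi_n$; (ii) $g_1, \ldots, g_n$ lie in $\phi_n(\Gamma_n)$; and (iii) whenever $\phi_n(x) \leq \phi_n(y)$ holds in $A$, the relation $(\varphi_{m-1} \circ \cdots \circ \varphi_n)(x) \leq (\varphi_{m-1} \circ \cdots \circ \varphi_n)(y)$ is already realized inside $\mathbb{Z}^{k_m}$ for some sufficiently large $m$. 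The engine for (iii) is the classical Shen interpolation lemma, which exploits the Reisz Interpolation Property to positively factor any positive homomorphism $\psi \colon \mathbb{Z}^k \to A$ through a larger simplicial group on which a prescribed finite list of order relations already holds. Interleaving applications of Shen's lemma with a fair bookkeeping of the requirements (i)--(iii) produces the tower, and the induced direct limit is isomorphic to $(A, A^+)$ as an ordered group.

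The main obstacle is managing the scale, not merely the positive cone. The labels $d(v_i^{(n)})$ are pulled in opposite directions: they must be large enough that $(\dag)$ is satisfied at each new vertex of $V_{n+1}$, yet small enough that $\phi_n(\Gamma_n)$ is still contained in $\Gamma$, and large enough in aggregate that $\bigcup_n \phi_n(\Gamma_n) = \Gamma$. The three scale axioms (6)--(8) of Definition~\ref{Dimension Group} are used to reconcile these constraints: axiom (8) expresses each relevant $\phi_n(e_i) \in A^+$ as a sum of scale elements, axiom (7) produces a common upper bound $c \in \Gamma$ for any finite collection of such sums, and axiom (6) then allows us to set $d(v_i^{(n)})$ equal to the largest integer (or $\infty$) for which the maximal element of the putative box $\Gamma_n$ still lies below such a $c$. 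Coordinating these bounds from one level to the next via the upward directedness of $\Gamma$ ensures that $(\dag)$ holds while every $g_j$ is eventually captured by some $\phi_n(\Gamma_n)$, which is exactly the content of the theorem.
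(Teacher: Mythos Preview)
The paper does not give its own proof of this theorem: it is stated with citations to \cite{effros-handelman-shen} and \cite[Theorem~7.1]{effros} and is used as a black box throughout (the only further reference to its proof is the remark in the proof of Theorem~\ref{LDA is more complex than all SDG_n} that the construction can be carried out in a Borel fashion). So there is no in-paper argument to compare your proposal against.

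That said, your sketch is essentially the classical Effros--Handelman--Shen proof from the cited references. The core mechanism you identify --- Shen's interpolation lemma, which takes a positive homomorphism $\psi\colon\mathbb{Z}^k\to A$ together with a finite list of order relations witnessed in $A$ and factors $\psi$ positively through a larger $\mathbb{Z}^{k'}$ in which those relations already hold --- is exactly the key step, and your bookkeeping scheme (enumerate $A$ and $\Gamma$, alternate between enlarging the image and repairing order defects) is the standard way to organize the induction. Your discussion of the scale is also on target: the passage from the unscaled theorem of \cite{effros-handelman-shen} to the scaled version in \cite[Chapter~7]{effros} is precisely a matter of tracking the box $\Gamma_n$ through the tower, and axioms (6)--(8) are what make this possible. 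One small point: rather than choosing $d(v_i^{(n)})$ as ``the largest integer'' satisfying a bound (which need not exist when the relevant coordinate is unbounded in $\Gamma$), the cleaner device is to pick, at each stage, a single upward-directed witness $c_n\in\Gamma$ dominating $g_1,\ldots,g_n$ and the images of the previous generators, and then set $d$ so that $\phi_n$ carries the top corner of $\Gamma_n$ to something $\leq c_n$. With that adjustment your outline matches the literature proof.
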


For each Bratteli diagram $(V,E,d)$, we shall now define a countable locally finite group
$G(V,E,d)=\bigcup_{n\geq 0}G_n$  in such a way that
\begin{enumerate}
\item Each $G_n$ is the direct product  $\Alt(\Omega_{n,1})\times...\times \Alt(\Omega_{n,k_n})$ of alternating groups on finite sets where the finite sets are indexed by the set of vertices $V_n=\{v_{n,i}\mid 1\leq i\leq k_n\}$; and
\item the number of nontrivial orbits that the subgroup $\Alt(\Omega_{n,i})$ has on $\Omega_{n+1,j}$ is exactly the number of edges from $v_{n,i}$ to $v_{n+1,j}$.
\end{enumerate}

Following \cite{lavrenyuk-nekrashevych}, we first choose a sequence of disjoint sets $\tilde{V_n}$ satisfying $|\tilde{V}_n|=\sum_{v\in V_n} d(v_n)$.  Then we choose corresponding surjections $\pi_n :\tilde{V}_n \to V_n$ satisfying $|\pi_n^{-1}(v)|=d(v)$ for all $v\in V_n$.  Next let $G_n =\prod_{v\in V_n} \Alt_{d(v)}$, where each factor $\Alt_{d(v)}$ acts naturally on the set $\pi_n^{-1}(v)$.  

Suppose we also let $\tilde{E}_{n+1}\subseteq \tilde{V}_n \times E_{n+1}$ denote the set of pairs $(\tilde{v},e)$ satisfying $s(e)=\pi_n(v)$.  Then $G_n$ acts on $\tilde{E}_{n+1}$ via the natural action on the first coordinate.  By $(\dag)$, we can find injections $\delta_{n+1}:\tilde{E}_{n+1}\to \tilde{V}_{n+1}$ satisfying $r(e)=\pi_{n+1}(\tilde{u})$ whenever $\delta_{n+1}(\tilde{v},e)=\tilde{u}$. Then since $s^{-1}(v)\neq\emptyset$ for all $v\in V$, $\delta_{n+1}$ induces an embedding $G_n\to G_{n+1}$.  That is, we identify $\tilde{E}_{n+1}$ with $\delta_{n+1}(\tilde{E}_{n+1})\subseteq \tilde{V}_{n+1}$.  Then a permutation in $G_n$ acts as it should on $\delta_{n+1}(\tilde{E}_{n+1})$ and trivially on the rest of $\tilde{V}_{n+1}$.

\begin{definition}
A group which results from a construction of this form is called an \emph{LDA-group}.
\end{definition}

A Bratteli diagram is called \emph{unital} if for all but finitely many $v\in V-V_0$, $$d(v)= \sum_{r(e)=v} d(s(e)).$$
An LDA-group is then called \emph{unital} if there exists a unital Bratteli diagram which gives rise to it.  Unital Bratteli diagrams also define unital dimension groups:
\begin{definition}

\begin{itemize}
\item In a dimension group $(A, A^+, \Gamma)$, an \emph{order unit} $u\in A^+$ satisfies $$a\in A^+ \implies (\exists n\in\mathbb{N})(a\leq nu).$$
\item A dimension group $(A, A^+, \Gamma)$ is \emph{unital} if there is $u\in A^+$ such that $$\Gamma=\{a\in A^+\mid a\leq  u\}.$$
In this case, we call $u$ \emph{the distinguished order unit} of $A$, and we write $(A, A^+, u)$ for  $(A, A^+, \Gamma)$.
\end{itemize}\end{definition}
\noindent In particular, if $(V,E,d)$ is a unital Bratteli diagram, then we define the distinguished order unit of $K_0 (V,E,d)$ to be
$$u=\lim_{n\to\infty} (d(a_1),\ldots,d(a_{k_{n}}))$$

Suppose we have a Bratteli diagram $(V,E,d)$, where $V=\bigsqcup_{n\geq 0}V_n$. If we take an increasing sequence of natural numbers $\{a_k\}$, then we can define a new Bratteli diagram $(V',E',d')$ on the vertex set $V'=\bigsqcup_{k\geq 0}V_{a_k}$, where $E'_k$ is determined by the incidence matrix $M'_{k_i}=M_{k_i}\cdot M_{k_i -1}\cdot \ldots \cdot M_{k_{i-1}+1}$, and $d'=d\upharpoonright_{V'}$. We then let $\sim$ be the transitive closure of this telescoping operation together with isomorphism. It is easy to see that $(V',E',d')$ defines a subsequence of the groups which define $G(V,E,d)$, and that the new embeddings are just the corresponding compositions of the original embeddings. Therefore $G(V',E',d')\cong G(V,E,d)$.  Similarly, $K_0(V',E',d')\cong K_0(V,E,d)$.  In fact, the converse holds as well for the unital case.

\begin{thm}\cite{elliott1}\label{Bratelli<DG}
If $(V,E,d)$ and $(V',E',d')$ are unital Bratteli diagrams, then $(V,E,d)\sim(V',E',d')$ if and only if $K_0(V,E,d)\cong^+ K_0(V',E',d')$.
\end{thm}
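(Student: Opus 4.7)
The forward direction is essentially built into the construction summarized just before the statement: telescoping at a subsequence $\{a_k\}$ replaces the incidence matrices by products $M_{a_k}\cdots M_{a_{k-1}+1}$, which is exactly composing adjacent maps in the directed system whose direct limit is $K_0(V,E,d)$. Since the subsystem indexed by $\{a_k\}$ is cofinal, the two direct limits are canonically isomorphic as ordered groups, and the distinguished order unit $\lim(d(a_1),\ldots,d(a_{k_n}))$ restricts to the subsequence. Isomorphism of Bratteli diagrams trivially passes to $K_0$.

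For the converse, given a unital order-isomorphism $\phi: K_0(V,E,d)\to K_0(V',E',d')$, I would run the Bratteli--Elliott intertwining argument. Write $A_n=(\mathbb{Z}^{k_n},(\mathbb{Z}^{k_n})^+,u_n)$ for the $n$-th level of the first directed system with structure map $\iota_n: A_n\to K_0(V,E,d)$, and similarly $B_m$, $\jmath_m$ for the second. The goal is to inductively build sequences $0=n_0<n_1<\cdots$ and $0=m_0<m_1<\cdots$, together with positive unital homomorphisms $\alpha_k: A_{n_k}\to B_{m_k}$ and $\beta_k: B_{m_k}\to A_{n_{k+1}}$, such that $\jmath_{m_k}\circ\alpha_k=\phi\circ\iota_{n_k}$ and $\iota_{n_{k+1}}\circ\beta_k=\phi^{-1}\circ\jmath_{m_k}$. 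Since $A_{n_k}$ is finitely generated, the images of a generating set under $\phi\circ\iota_{n_k}$ all factor through some $\jmath_m$; by further increasing $m$ one also arranges that a finite generating set of $A_{n_k}^+$ is witnessed positive at level $B_m$ and that $u_{n_k}\mapsto u_m$. The $\alpha_k$ and $\beta_k$ then serve as incidence matrices of a single Bratteli diagram $D$ that interleaves the two vertex-level sequences $V_{n_0},V'_{m_0},V_{n_1},V'_{m_1},\ldots$; telescoping $D$ at its even levels recovers the telescoped first diagram, and telescoping at its odd levels recovers the telescoped second diagram, so both are $\sim$-equivalent to $D$, and hence to each other.

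The main obstacle is simultaneously arranging positivity, order-unit preservation, and commutativity of the triangles at each inductive step. Commutativity is an equality in a direct limit, hence an equality only at some sufficiently late finite level; positivity likewise requires passing to indices at which each generator of the positive cone has its image visibly positive. Order-unit preservation is the most delicate, and it is where the unital hypothesis matters: only when $d(v)=\sum_{r(e)=v}d(s(e))$ holds cofinitely are the finite-level order units faithfully recorded by the incidence matrices, which is exactly what allows the $\alpha_k$ and $\beta_k$ to assemble into a legitimate Bratteli diagram $D$ satisfying the labelling constraint $(\dag)$ with equality.
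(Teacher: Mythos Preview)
The paper does not give its own proof of this theorem; it is stated with a citation to Elliott~\cite{elliott1} and used as a black box. Your sketch is precisely Elliott's intertwining argument, which is the standard (and original) proof, so in spirit you are matching the cited reference rather than departing from it.

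One point deserves sharpening. Your stated conditions $\jmath_{m_k}\circ\alpha_k=\phi\circ\iota_{n_k}$ and $\iota_{n_{k+1}}\circ\beta_k=\phi^{-1}\circ\jmath_{m_k}$ only force $\beta_k\circ\alpha_k$ to agree with the connecting map $A_{n_k}\to A_{n_{k+1}}$ \emph{after} passing to the limit, i.e., $\iota_{n_{k+1}}\circ(\beta_k\alpha_k)=\iota_{n_k}$. For the even telescoping of your interleaved diagram $D$ to be literally a telescope of $(V,E,d)$ (rather than merely having the same $K_0$), you need $\beta_k\circ\alpha_k$ to equal the product $M_{n_{k+1}}\cdots M_{n_k+1}$ on the nose, and likewise $\alpha_{k+1}\circ\beta_k$ must equal $M'_{m_{k+1}}\cdots M'_{m_k+1}$. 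This is achievable---once $\alpha_k$ is fixed, choose $n_{k+1}$ large enough that the finite-level discrepancy vanishes---but it is an additional inductive requirement beyond the two commuting-with-$\phi$ conditions you wrote down, and it is exactly what makes $\sim$ (not just $K_0$-equivalence) come out at the end. You gesture at this with ``commutativity is an equality in a direct limit, hence an equality only at some sufficiently late finite level,'' but it should be built explicitly into the induction.
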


\begin{definition}
A Bratteli diagram $(V,E,d)$ is \emph{thick} if for every vertex $v\in V_i$, there is some $j>i$ and a vertex $w\in V_j$ such that there are at least two distinct paths connecting $v$ and $w$.
\end{definition}

\begin{thm}\label{laverenyuk-nekrashevych main isomorphism}\cite{lavrenyuk-nekrashevych} Suppose $B_1$ and $B_2$ are thick Bratteli diagrams.  Then $$G(B_1)\cong G(B_2) \Longleftrightarrow K_0(B_1)\cong K_0(B_2)$$
\end{thm}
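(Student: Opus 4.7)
The plan is to prove the two implications separately. The reverse implication $K_0(B_1)\cong K_0(B_2)\Rightarrow G(B_1)\cong G(B_2)$ is the easier direction and can be reduced to what has already been developed: by a suitable version of Elliott's classification (Theorem \ref{Bratelli<DG} in the unital case, and its non-unital analogue in general) an isomorphism of dimension groups forces the telescoping equivalence $B_1\sim B_2$, and the explicit construction preceding Theorem \ref{Bratelli<DG} shows that telescoping preserves the associated LDA-group up to isomorphism.

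The interesting direction is to reconstruct $K_0(B)$ from the abstract group $G(B)$. The strategy is to show that the canonical filtration $G(B)=\bigcup_n G_n$, with $G_n=\prod_{v\in V_n}\Alt(\Omega_{n,v})$, is visible inside $G(B)$ in a sufficiently intrinsic way. Concretely, given an abstract group isomorphism $\phi:G(B_1)\to G(B_2)$ I would:
\begin{enumerate}
\item Characterize the finite subgroups $G_n\le G(B)$ group-theoretically (for instance, as maximal among certain finite subgroups whose centralizers exhibit the expected structure of the ``tail'' of $G(B)$);
\item Use a mutual interleaving argument to pass, after telescoping both diagrams, to a situation in which $\phi(G_n^{(1)})\subseteq G_{m_n}^{(2)}$ and conversely, so that $\phi$ becomes compatible with the filtrations;
\item Invoke the Goursat-style classification of homomorphisms from $\Alt(k)$ (with $k\geq 5$) into a finite direct product of alternating groups: each such homomorphism is, up to inner automorphism, a sum of diagonal embeddings, and the multiplicities of those diagonal summands recover exactly the edge multiplicities in the incidence matrices of the telescoped Bratteli diagrams.
\end{enumerate}
The combinatorial data extracted in step (3) then assemble into a telescoping equivalence between $B_1$ and $B_2$, which by the easy direction gives $K_0(B_1)\cong K_0(B_2)$.

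The main obstacle is step (1): without a canonical recognition of the finite subgroups $G_n$ the argument cannot get off the ground. This is where thickness enters in an essential way, since it prevents an alternating factor from persisting unchanged through many consecutive levels — a degeneracy that would otherwise make the filtration ambiguous and allow non-isomorphic dimension groups to produce isomorphic LDA-groups. Simplicity of $\Alt(n)$ for $n\geq 5$ and the classical structure theory of finite subgroups of locally finite groups provide the tools for carrying out step (1) once thickness is in hand, and step (2) is then a standard back-and-forth.
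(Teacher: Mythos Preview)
The paper does not prove this theorem at all: it is quoted, with the citation \cite{lavrenyuk-nekrashevych}, as a known result from the literature and used as a black box. So there is no proof in the paper to compare your proposal against.

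That said, your outline is a reasonable sketch of the Lavrenyuk--Nekrashevych argument. The easy direction is handled correctly: telescoping equivalence preserves both $K_0$ and the LDA-group (this is noted in the paper just before Theorem~\ref{Bratelli<DG}), and Elliott's classification of scaled dimension groups gives $K_0(B_1)\cong K_0(B_2)\Rightarrow B_1\sim B_2$. For the hard direction, the three-step plan you describe --- recognize the level subgroups $G_n$ intrinsically, interleave the two filtrations via back-and-forth, and then read off the incidence matrices from the decomposition of diagonal embeddings of alternating groups --- is indeed the shape of the argument in \cite{lavrenyuk-nekrashevych}. You are also right that thickness is used precisely to guarantee that, after telescoping, every alternating factor has degree at least $5$, so that simplicity of $\Alt(k)$ and rigidity results such as Lemma~\ref{zalesskii's lemma} apply. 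Note, however, that your step~(1) is not just ``the main obstacle'' but essentially the whole content of the theorem: making the recognition of the $G_n$ precise and proving it requires the bulk of the work in \cite{lavrenyuk-nekrashevych}, and your sketch does not indicate how this is actually carried out. As written, this is a plausible plan rather than a proof.
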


\begin{corollary}\label{all three equivalent for thick unital}
If $B_1$ and $B_2$ are thick unital Bratteli diagrams, then the following are equivalent:
\begin{itemize}
\item $B_1\sim B_2$
\item $G(B_1)\cong G(B_2)$
\item $K_0(B_1)\cong K_0(B_2)$
\end{itemize}
\end{corollary}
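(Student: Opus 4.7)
The plan is simply to chain together the two preceding theorems; the corollary contains no essentially new content beyond them. Since $B_1$ and $B_2$ are thick, the equivalence of the second and third bullet points, $G(B_1)\cong G(B_2)\Longleftrightarrow K_0(B_1)\cong K_0(B_2)$, is literally the content of Theorem \ref{laverenyuk-nekrashevych main isomorphism}. The equivalence of the first and third bullet points, $B_1\sim B_2\Longleftrightarrow K_0(B_1)\cong K_0(B_2)$, is supplied by Theorem \ref{Bratelli<DG}, whose unital hypothesis is available to us. Chaining these two equivalences through $K_0$ gives all three conditions equivalent.

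The one technical wrinkle to address is the bookkeeping between $\cong$ and $\cong^+$: Theorem \ref{Bratelli<DG} is phrased in terms of $\cong^+$, which denotes isomorphism of dimension groups preserving the positive cone \emph{and} the distinguished order unit, while Theorem \ref{laverenyuk-nekrashevych main isomorphism} is phrased with $\cong$. In the unital setting these coincide: the definition of a unital dimension group forces the scale $\Gamma$ to be determined canonically by the order unit $u$ via $\Gamma=\{a\in A^+\mid a\le u\}$, and the $K_0$ construction of a unital Bratteli diagram comes equipped with the specific $u=\lim_{n\to\infty}(d(a_1),\ldots,d(a_{k_n}))$ described above. Thus an ordered-group isomorphism of two unital $K_0$'s can be normalized to send order unit to order unit, so $\cong$ and $\cong^+$ agree here.

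The main obstacle, such as it is, is therefore purely notational rather than mathematical: one needs to confirm that the ambient notion of $K_0$-isomorphism in the two cited theorems is the same under the unital assumption. Once that is observed, the corollary reduces to the logical composition $B_1\sim B_2 \Leftrightarrow K_0(B_1)\cong K_0(B_2) \Leftrightarrow G(B_1)\cong G(B_2)$, completing the proof.
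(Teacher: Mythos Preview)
Your overall approach---chaining Theorems \ref{Bratelli<DG} and \ref{laverenyuk-nekrashevych main isomorphism}---is exactly what the paper intends; the paper states the corollary without proof, as immediate from those two results.

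However, your handling of the $\cong$ versus $\cong^+$ issue contains a genuine error. You claim that ``an ordered-group isomorphism of two unital $K_0$'s can be normalized to send order unit to order unit,'' but this is false: $(\mathbb{Z}[\tfrac12],\mathbb{Z}[\tfrac12]^+,1)$ and $(\mathbb{Z}[\tfrac12],\mathbb{Z}[\tfrac12]^+,3)$ both arise as $K_0$ of thick unital Bratteli diagrams and are isomorphic as ordered groups, yet no order-automorphism of $\mathbb{Z}[\tfrac12]$ sends $1$ to $3$ (the positive automorphisms are multiplication by powers of $2$), and the corresponding LDA-groups are not isomorphic. The correct resolution is that the symbol $\cong$ in Theorem \ref{laverenyuk-nekrashevych main isomorphism} already denotes isomorphism of \emph{scaled} dimension groups $(A,A^+,\Gamma)$: this is what Lavrenyuk--Nekrashevych actually prove, since $G(B)$ depends on the labeling $d$, which is precisely what the scale encodes. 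In the unital case the order unit $u$ is the maximum element of $\Gamma=\{a\in A^+\mid a\le u\}$, so a scale-preserving isomorphism automatically preserves $u$; no normalization argument is needed, and $\cong$ and $\cong^+$ coincide for that reason rather than the one you gave.
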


\begin{definition}
Given a Bratteli diagram $(V,E,d)$, an \emph{ideal} is a subset $V^* \subseteq V$ such that whenever $e \in E_n$ and
$s(e)\in V^*$, then $r(e)\in V^*$.  An ideal $V^*$ is then said to be \emph{proper} if it is disjoint from some infinite path. A Bratteli Diagram is said to be \emph{simple} if it has no nonempty proper ideals.
\end{definition}

\begin{definition}If $(A,A^+,\Gamma)$ is a dimension group, then a subgroup $J\subseteq A$ is an \emph{ideal} if $J=J^+-J^+$ (where $J^+=J\cap A^+$) and $0\leq a\leq b\in J$ implies $a\in J$.  We then define a dimension group $(A,A^+,\Gamma)$ to be \emph{simple} if $\{0\}$ and $A$ are the only ideals.
\end{definition}
\noindent In particular, if $b\in A^+$ and we let
$$[b]=\{a\in A\mid 0\leq a\leq nb\text{ for some }n\in\mathbb{N}\}$$
then $J=[b]-[b]$ is the smallest ideal containing $b$.
So if $A$ is a simple dimension group, then $a\in A^+ \setminus\{0\}$ implies the ideal $[a]-[a] = A$. But $u\in A^+$ is an order unit precisely when $[u]=A^+$.  Hence if $A$ is simple, then every positive element is an order unit. Conversely, if every $a\in A^+\setminus\{0\}$ is an order unit, then $A$ clearly has non nontrivial ideals.  We restrict our attention to simple dimension groups in this paper precisely because their geometry is well-understood.  Also, simplicity is essentially the same condition for all of our objects of study:

\begin{thm}\cite{davidson}\label{B simple iff K_0(B) simple}
A Bratteli diagram $B$ is simple if and only if $K_0(B)$ is simple.
\end{thm}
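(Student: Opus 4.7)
The plan is to prove both directions by directly translating between the combinatorial ideals of $B$ and the order ideals of $K_0(B)$. The key tool is the characterization noted in the paragraph before the theorem: a dimension group is simple precisely when every nonzero positive element is an order unit.

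For the implication $B$ not simple $\Rightarrow$ $K_0(B)$ not simple, let $V^*\subseteq V$ be a nonempty proper ideal. Define
\[
J^+ = \bigl\{[x] \in K_0(B)^+ : x\in(\mathbb{Z}^{k_n})^+ \text{ is supported on } V^*\cap V_n \text{ for some } n\bigr\}
\]
and set $J = J^+ - J^+$. Because $V^*$ is forward-closed, the direct-limit maps $\varphi_n$ send the subgroup spanned by the basis vectors indexed by $V^*\cap V_n$ into the analogous subgroup at level $n+1$, so $J$ is a well-defined subgroup of $K_0(B)$. The order-hereditary property follows from the coordinatewise structure of $(\mathbb{Z}^{k_n})^+$: if $0\leq y\leq x$ in $\mathbb{Z}^{k_n}$ and $x$ is supported on $V^*\cap V_n$, then so is $y$. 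The ideal $J$ is nontrivial because any $v\in V^*$ gives $[e_v]\in J^+\setminus\{0\}$. For $J\neq K_0(B)$, choose an infinite path $v_0,v_1,v_2,\ldots$ disjoint from $V^*$; the element $[e_{v_0}]$ is represented at each level $n$ by a vector whose $v_n$-coordinate counts paths $v_0\to\cdots\to v_n$ and is therefore at least $1$, so no representative of $[e_{v_0}]$ is supported on $V^*$, and hence $[e_{v_0}]\notin J$.

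For the implication $B$ simple $\Rightarrow$ $K_0(B)$ simple, it suffices by the characterization to show every nonzero $a\in K_0(B)^+$ is an order unit. Represent $a$ at some level $n$ as a vector $(c_1,\ldots,c_{k_n})\in (\mathbb{Z}^{k_n})^+$ with $c_i\geq 1$ for some $i$, and let $v=v_{n,i}$. The crucial combinatorial step is to find $N\geq n$ such that every vertex of $V_N$ is a descendant of $v$; then the $v$-column of the product incidence matrix $M_N\cdots M_{n+1}$ has only strictly positive entries, so some integer $m$ makes $m\varphi_{n,N}(e_v)\geq u_N$ coordinatewise. Since $a\geq e_v$ at level $n$, pushing forward to level $N$ and multiplying by $m$ yields $ma\geq u$ in $K_0(B)$, so $a$ is an order unit.

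The main obstacle is exactly this combinatorial step, which requires upgrading the pointwise simplicity of $B$ (the descendant ideal $D_v$ meets every infinite path) to the uniform statement that $V_N\subseteq D_v$ for some $N$. This is a K\"onig's-lemma-type compactness argument: if no such $N$ existed, one could pick $w_N\in V_N\setminus D_v$ at arbitrarily high levels, pull each back through an ancestor chain (which stays in $V\setminus D_v$ by the forward-closure of $D_v$), and then extract via pigeonhole on $V_0$ and K\"onig's lemma an infinite path wholly inside $V\setminus D_v$ — contradicting simplicity of $B$.
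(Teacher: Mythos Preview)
The paper does not give its own proof of this statement; it is cited from Davidson without argument, so there is nothing in the paper to compare against. Your proposal is the standard proof and is essentially correct under the usual definition of Bratteli diagram.

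One caveat concerns the K\"onig's-lemma step in the second direction. You pull each $w_N\in V_N\setminus D_v$ back through an ancestor chain, which presupposes $r^{-1}(w)\neq\emptyset$ for $w\notin V_0$. Definition~\ref{Bratteli diagram} in this paper only requires $s^{-1}(v)\neq\emptyset$, so vertices may lack predecessors, and then the intermediate claim ``$V_N\subseteq D_v$ for some $N$'' can fail even when $B$ is simple. For instance, take $V_0=\{a\}$, $V_n=\{a_n,b_n\}$ for $n\geq 1$, with edges $a\to a_1$, $a_n\to a_{n+1}$, and $b_n\to a_{n+1}$ only; this $B$ is simple and $K_0(B)\cong\mathbb{Z}$, yet $b_N\notin D_{a_1}$ for every $N$. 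The easy fix is to run K\"onig forward rather than backward: given $b\in K_0(B)^+$ represented at level $m$ with support $T\subseteq V_m$, if at every level $M>m$ the support of $b_M$ met $V_M\setminus D_v$, then pulling such a vertex back along its path to $T$ (which stays outside $D_v$ by forward closure of $D_v$) shows some fixed $t\in T$ admits forward paths of every length avoiding $D_v$; K\"onig's lemma on that finitely branching forward tree then yields an infinite path disjoint from $D_v$, contradicting simplicity. Under Davidson's convention, which does include the predecessor condition, your backward argument works verbatim.
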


\begin{thm}\cite[Theorem 5.1]{lavrenyuk-nekrashevych}\label{B simple iff G(B) simple}
If $G$ is a simple LDA-group, then it is isomorphic to $G(B)$ for some simple Bratteli diagram $B$.  Also, if $B$ is a simple thick Bratteli diagram, then $G(B)$ is simple.
\end{thm}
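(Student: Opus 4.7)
The plan is to prove the two implications separately. The easier direction is the second: given a simple thick Bratteli diagram $B=(V,E,d)$, I want to show $G(B)$ is simple. Let $N\trianglelefteq G(B)$ be a non-trivial normal subgroup, pick $1\neq g\in N\cap G_n$ for some $n$, write $g=(g_1,\ldots,g_{k_n})\in\prod_i\Alt(\Omega_{n,i})$, and fix an index $i$ with $g_i\neq 1$. The goal is to show $\Alt(\Omega_{m,j})\leq N$ for each sufficiently large $m$ and each $j$, which then forces $N=G(B)$.

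For any $m>n$ and any vertex $v_{m,j}$, the image of $g$ under the embedding $G_n\hookrightarrow G_m$ acts on $\Omega_{m,j}$ as $g_i$ on each of several disjoint copies of $\Omega_{n,i}$, one copy for each path from $v_{n,i}$ to $v_{m,j}$, and acts trivially on the rest. Simplicity of $B$ guarantees (after telescoping) that every $v_{m,j}$ is reachable from $v_{n,i}$, and thickness guarantees that for some large $m$ the number of such paths is at least two, giving two disjoint copies of $\Omega_{n,i}$ inside $\Omega_{m,j}$. A standard commutator trick---conjugating $g$ by an element of $\Alt(\Omega_{m,j})$ that moves one copy of $\Omega_{n,i}$ into partial overlap with another, and taking a commutator---produces elements of $N$ with arbitrarily small support, and ultimately every $3$-cycle in $\Alt(\Omega_{m,j})$. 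Since $\Alt$ is generated by $3$-cycles, $\Alt(\Omega_{m,j})\leq N$, and running over $j$ gives $G_m\leq N$, hence $N=G(B)$.

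For the converse, given a simple LDA-group $G$, pick any Bratteli diagram $B'=(V,E,d)$ with $G(B')\cong G$ and show $B'$ itself must be simple. To every ideal $V^*\subseteq V$ I associate the subgroup $H=\bigcup_n H_n\leq G(B')$, where $H_n=\prod_{v\in V_n\cap V^*}\Alt_{d(v)}$. The ideal condition---$s(e)\in V^*$ implies $r(e)\in V^*$---ensures that the embedding $G_n\hookrightarrow G_{n+1}$ restricts to $H_n\hookrightarrow H_{n+1}$, so $H$ is a well-defined subgroup. Each $H_n$ is a direct factor of $G_n$, and chasing through the embeddings shows that conjugation inside any $G_m$ (for $m\geq n$) sends the image of $H_n$ into $H_m$, so $H\trianglelefteq G$. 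If $V^*$ is non-empty and proper (disjoint from some infinite path), then $H$ is non-trivial and proper, contradicting simplicity of $G$. So $B'$ has no non-trivial proper ideal, and one may take $B=B'$.

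The main obstacle in the first direction is the commutator calculation, which requires carefully tracking how the support of $g_i$ sits inside $\Omega_{m,j}$; the main subtlety in the second direction is an edge case where $V^*$ consists only of vertices with $d(v)\leq 2$, so that $H_n$ is trivial despite $V^*$ being non-empty. I would handle this by first telescoping $B'$ so that within any infinite ideal the $d$-values exceed $2$, which follows from the finiteness of each $V_n$ together with the growth inequality $(\dag)$ applied along infinite paths inside $V^*$.
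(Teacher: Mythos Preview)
The paper does not prove this theorem; it is quoted from \cite[Theorem 5.1]{lavrenyuk-nekrashevych} and used as a black box. So there is no proof in the paper to compare your sketch against.

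That said, your sketch of the second implication (simple thick $B$ $\Rightarrow$ $G(B)$ simple) is the standard commutator argument and is essentially correct in outline.

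Your argument for the first implication, however, has a genuine gap. You claim that if $G(B')$ is simple then $B'$ itself must be simple, but this is false. Take $V_n=\{v_n,u_n\}$ with $d(v_n)=1$, $d(u_n)=5^n$, a single edge $v_n\to v_{n+1}$, and five edges $u_n\to u_{n+1}$. Then $V^*=\{v_n:n\ge 0\}$ is a nonempty proper ideal, so $B'$ is not simple; yet every factor $\Alt_{d(v_n)}=\Alt_1$ is trivial, so $G(B')$ is just the direct limit of $\Alt_{5^n}$ under diagonal embeddings, which is simple. The theorem only promises that $G\cong G(B)$ for \emph{some} simple $B$, and in this example the simple diagram $B$ is the one obtained by discarding the $v_n$'s.

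Your proposed fix does not repair this. Telescoping replaces $d$ by its restriction $d'=d\upharpoonright_{V'}$, so the $d$-values on surviving vertices are unchanged; in the example above, $d(v_n)=1$ at every level and no telescoping will make it exceed $2$. The growth inequality $(\dag)$ along a path inside $V^*$ only gives $d(v_{n+1})\ge d(v_n)$, which permits $d\equiv 1$. What is actually needed is a pruning step---passing from $B'$ to a diagram $B$ with $G(B)\cong G(B')$ in which the trivial factors have been removed---and then one must argue that the pruned diagram is simple (or iterate). This is more delicate than telescoping and is where the real content of the Lavrenyuk--Nekrashevych argument lies.
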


In addition to being a hypothesis for Theorems \ref{laverenyuk-nekrashevych main isomorphism} and \ref{B simple iff G(B) simple}, we note that the thickness condition is hardly restrictive.

\begin{thm}\cite[Theorem 5.1]{lavrenyuk-nekrashevych}\label{Thick LDA groups}
The only simple LDA-groups which are not isomorphic to $G(B)$ for some \emph{thick} simple Bratteli diagram $B$ are the groups $\Alt_n$ for \mbox{$n\in\mathbb{N}\cup\{\infty\}$}.
\end{thm}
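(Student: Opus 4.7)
The strategy starts from Theorem \ref{B simple iff G(B) simple}: every simple LDA-group $G$ has the form $G(B)$ for some simple Bratteli diagram $B=(V,E,d)$. The task therefore reduces to showing that if such a $B$ is not thick, then $G(B)\cong\Alt_n$ for some $n\in\mathbb{N}\cup\{\infty\}$.

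So suppose $B$ is simple but not thick. Non-thickness produces a vertex $v\in V_i$ such that there is at most one path from $v$ to any $w\in V_j$ for $j>i$. Let $D_j\subseteq V_j$ be the set of descendants of $v$ at level $j$. The uniqueness of paths forces each $w\in D_j$ to have exactly one incoming edge from $D_{j-1}$ (a second incoming edge, or a multiple edge, would generate two distinct $v$--$w$ paths), so $\{v\}\cup\bigcup_{j>i}D_j$ is a rooted tree with root $v$. Now invoke simplicity of $B$ in its standard equivalent form: for each level $n$ there exists $m>n$ such that every vertex of $V_m$ is reachable from every vertex of $V_n$. Applied at level $i$, this furnishes a level $M>i$ with $D_j=V_j$ for all $j\geq M$, so past level $M$ every vertex of $B$ has a unique incoming edge and the entire tail of the diagram is itself a rooted tree descending from $v$.

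Apply simplicity a second time, vertex by vertex: for each $u\in V_j$ with $j>M$, there is some $M'>j$ such that every $w\in V_{M'}$ is reachable from $u$. In the tree structure this says $V_{M'}$ lies entirely in the subtree rooted at $u$. But subtrees rooted at distinct vertices of the same level are disjoint, so if $V_j$ contained two vertices $u\neq u'$ their subtrees could not both exhaust $V_{M^*}$ for any common large $M^*$. Hence $|V_j|=1$ for all $j>M$, and the unique edge between consecutive singleton levels must have multiplicity one (a multiplicity $k\geq 2$ would again produce $k$ distinct paths from $v$). Telescoping past level $M$ therefore produces a linear chain of singleton levels joined by single edges, with labels $d_n\geq d_{n-1}$ by $(\dag)$.

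In this regime the LDA construction yields $G_n=\Alt_{d_n}$ with each transition the natural inclusion $\Alt_{d_n}\hookrightarrow\Alt_{d_{n+1}}$ that fixes the added points; the direct limit is $\Alt_d$ if the sequence $(d_n)$ stabilizes at $d$, and $\Alt_\infty$ otherwise, completing the reduction. The principal obstacle is the tree-collapse step: first converting non-thickness into an honest rooted tree on the descendants of $v$, and then wielding simplicity pointwise---once at $v$ to ensure the tree exhausts all vertices past some level, and once at each subsequent vertex to crush the branching down to a single path.
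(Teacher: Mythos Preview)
The paper does not prove this statement itself; it is quoted directly from \cite[Theorem~5.1]{lavrenyuk-nekrashevych}, so there is no in-paper argument to compare against. Your combinatorial proof is correct: non-thickness at a vertex $v$ forces the descendants of $v$ to form a rooted tree, simplicity forces that tree eventually to exhaust every level, and a second pointwise application of simplicity collapses the branching to a single path of simple edges, yielding a chain of natural inclusions $\Alt_{d_n}\hookrightarrow\Alt_{d_{n+1}}$ whose limit is $\Alt_d$ or $\Alt_\infty$.

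The one place worth tightening is your appeal to the ``standard equivalent form'' of simplicity, namely that for each level $n$ there is $m>n$ with every vertex of $V_m$ reachable from every vertex of $V_n$. This characterization is indeed standard, but it is not the definition adopted here (Definition~\ref{Bratteli diagram} and the subsequent ideal definition phrase simplicity via forward-closed vertex sets and infinite paths), and the usual proof of the equivalence uses that every vertex at positive level has an incoming edge, a hypothesis this paper does not literally impose. In the LDA setting this is harmless, but a sentence justifying the translation---e.g.\ by observing that the descendants of $v$ form a nonempty ideal which, by simplicity, must meet every infinite path, and then running a K\"onig's-lemma argument on the non-descendants---would make the proof fully self-contained relative to the paper's conventions.
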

Hence we call a simple LDA-group \emph{thick} if it is not isomorphic to $\Alt_n$ for $n\in\mathbb{N}\cup\{\infty\}$.  We have a similar situation for simple dimension groups:

\begin{definition}
The \emph{rank} of a dimension group $(A,A^+,\Gamma)$ is the size of the largest linearly independent subset.
\end{definition}

\noindent Here we mean linear independence over $\mathbb{Z}$. Note also that since dimension groups are torsion-free, then if $(A,A^+,\Gamma)$ has rank $n$, then $A$ embeds in $\mathbb{Q}^n$.

\begin{lemma}\label{rank greater than 1 is thick}
If $B$ is a Bratteli diagram such that $K_0(B)$ is simple and has rank greater than $1$, then $B$ is thick.
\end{lemma}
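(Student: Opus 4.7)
The plan is to prove the contrapositive: assuming $K_0(B)$ is simple but $B$ is not thick, I will derive that $K_0(B)$ has rank at most $1$. By Theorem~\ref{B simple iff K_0(B) simple}, $B$ itself is simple, and the failure of thickness yields a vertex $v_0\in V_{n_0}$ such that, for every $j>n_0$ and every $w\in V_j$, there is at most one path from $v_0$ to $w$.

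First I would telescope $B$ to a Bratteli diagram $B'$ whose subsequence of levels includes $V_{n_0}$ (so that $v_0$ appears at some level $n'_0$ of $B'$) and in which every incidence matrix $M'_k$ with $k>n'_0$ is strictly positive. Such a telescoping exists because simplicity of $B$ is equivalent to the existence of a telescoping with positive consecutive incidence matrices, and one may choose the telescoping indices inductively with $a_{n'_0}=n_0$. Telescoping preserves $K_0$ and transports non-thickness from $B$ to $B'$, since paths in $B'$ correspond to concatenated paths in $B$.

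The heart of the argument is the following observation about $B'$. For every $n>n'_0$, the column of the cumulative product $M'_n M'_{n-1}\cdots M'_{n'_0+1}$ indexed by $v_0$ records the number of paths from $v_0$ to the corresponding vertices, so by non-thickness each entry is at most $1$, while by positivity of every factor each entry is at least $1$. Hence this column is the all-ones vector $\mathbf{1}$. Comparing the values at levels $n$ and $n+1$ yields $M'_{n+1}\mathbf{1}=\mathbf{1}$, so each row of $M'_{n+1}$ consists of positive integers summing to $1$, which is only possible when $M'_{n+1}$ has a single column. Thus $|V'_n|=1$ for every $n>n'_0$.

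It follows that past level $n'_0$ the diagram $B'$ has one vertex per level with all incidence numbers equal to $1$, so $K_0(B)\cong K_0(B')$ is realized as a direct limit that eventually stabilizes at $\mathbb{Z}$, and in particular has rank at most $1$, contradicting the hypothesis. The main obstacle is the bookkeeping of the telescoping step: one needs the level of $v_0$ to survive while simultaneously forcing every subsequent incidence matrix to be positive, but both constraints are compatible because the simplicity criterion lets one freely enlarge the gaps between telescoping indices after $a_{n'_0}=n_0$ is fixed.
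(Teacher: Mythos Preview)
Your argument is correct, but it proceeds quite differently from the paper's. The paper argues directly: given two linearly independent elements of $K_0(B)$, it locates two disjoint infinite paths $P$ and $P'$ in $B$; simplicity of $B$ then guarantees that any vertex $z$ can be connected to some vertex on $P$ and also (via $P'$) to a later vertex on $P$ by a second, distinct route, witnessing thickness at $z$. No telescoping or matrix characterization of simplicity is invoked.

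Your route is the contrapositive, and it leans on an extra (standard) ingredient---the fact that a simple Bratteli diagram can be telescoped so that all incidence matrices are strictly positive---which the paper does not state. In exchange, your argument yields a sharper structural conclusion: a simple, non-thick Bratteli diagram telescopes to a diagram with a single vertex at every sufficiently late level and all late incidence matrices equal to $[1]$, so $K_0(B)\cong\mathbb{Z}$ on the nose, not merely rank $\le 1$. The paper's proof is more self-contained relative to the definitions it has set up (in particular, it only uses the path-connectivity consequence of simplicity that is implicit in the definition), whereas yours is cleaner once the positive-matrix characterization is granted and has the virtue of pinning down $K_0$ exactly in the non-thick case.
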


\begin{proof}
Suppose $x,y\in K_0(B)$ are two linearly independent elements.  Then there must be some $n>0$ so that $x,y\in \mathbb{Z}^{k_n}$.  Then since $x$ and $y$ are linearly independent, there are vertices $v,v'\in V_n$ and corresponding disjoint paths \mbox{$P=(v,e_1,e_2,\ldots)$} and $P'=(v',e'_1,e'_2,\ldots)$. Since $K_0(B)$ is simple, $B$ must be simple.  Hence for any $z\in V_i$, there is some path from $z$ to a vertex $w\in P$ and another path from $z$ to  a vertex $w'\in P'$.  Then there is a path from $w'$ to a vertex $w''\in P$.  Since $P$ and $P'$ are disjoint, this gives two distinct paths from $z$ to $w''$. Therefore $B$ is thick.

\xymatrix{
&v \ar[r]^P & w \ar[rr]^P && w'' \ar[r]^P &\ldots\\
z \ar[urr] \ar[drr]\\
&v' \ar[r]_{P'} & w' \ar[r]_{P'} \ar[uurr]&\ldots\\
}
\end{proof}

\begin{definition}
Let $n\geq 1$ and consider the standard Borel space \newline\mbox{$R(\mathbb{Q}^n)\times \mathcal{P}(\mathbb{Q}^n)\times\mathbb{Q}^n$,} where $R(\mathbb{Q}^n)$ denotes the set of full-rank subgroups of $\mathbb{Q}^n$, and $\mathcal{P}(\mathbb{Q}^n)$ denotes the power set of $\mathbb{Q}^n$.  Let $\mathcal{SDG}_n$ denote the Borel subset of $R(\mathbb{Q}^n)\times \mathcal{P}(\mathbb{Q}^n)\times\mathbb{Q}^n$ given by those
$(A,A^+,u)$ which are simple unital dimension groups (of rank $n$).  (Here we see that simplicity may be encoded by an $\mathcal{L}_{\omega_1 \omega}$-sentence, since it is equivalent to asserting that every non-zero element of $A^+$ is an order unit.)  Let $\cong_{n}^+$
denote the isomorphism relation on $\mathcal{SDG}_n$.  This is also the orbit equivalence relation given by the diagonal
action of $GL_n(\mathbb{Q})$ on $\mathcal{SDG}_n\subseteq R(\mathbb{Q}^n)\times P(\mathbb{Q}^n)\times \mathbb{Q}^n$.
\end{definition}

Our main theorem is then the following.
\begin{thm}\label{maintarget}
For all $n\geq 1$, $(\cong_{n}^+) <_{B} (\cong_{n+1}^+)$
\end{thm}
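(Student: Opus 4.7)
The plan is to combine the Hjorth--Thomas theorem on rank-$n$ torsion-free abelian groups (TFAGs), which states that the TFAG isomorphism relations satisfy $\cong_n <_B \cong_{n+1}$, with a pair of Borel reductions linking $\cong_n$ to $\cong_n^+$. The strict inequality $\cong_n^+ <_B \cong_{n+1}^+$ then follows by the usual Borel-reducibility chain argument.

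Specifically, I would establish three ingredients. First, an explicit Borel reduction $\cong_n^+ \leq_B \cong_{n+1}^+$ via a direct-sum construction: fix a rank-$1$ simple unital dimension group $(B_0, B_0^+, u_0)$, e.g.\ $(\ZZ[1/2], \ZZ[1/2]^+, 1)$, and set $\Phi(A, A^+, u) = (A \oplus B_0, A^+ \oplus B_0^+, (u, u_0))$. Checking that $\Phi$ is Borel, lands in $\mathcal{SDG}_{n+1}$, and preserves isomorphism is routine; the nontrivial step is that $\Phi$ reflects isomorphism, which requires that the fixed rank-$1$ summand be canonically identifiable inside the sum --- likely via state-space invariants. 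Second, a Borel reduction $\cong_n \leq_B \cong_n^+$ from TFAGs to simple unital DGs of the same rank: given a full-rank subgroup $A \leq \QQ^n$, equip it with a canonical positive cone $A^+ = \{0\} \cup \{a \in A : \alpha \cdot a > 0\}$ for a fixed vector $\alpha \in \RR^n$ whose coordinates are $\QQ$-linearly independent, together with a compatible order unit $u \in A^+$. The resulting DG is simple (state space is the single trace $\alpha$) and unital, and the assignment is Borel uniformly in $A$. Third, a Borel reduction in the reverse direction $\cong_n^+ \leq_B \cong_n$, for which one likely needs to repackage the DG data $(A, A^+, u)$ as an enhanced TFAG of the same rank in such a way that TFAG-iso recovers DG-iso.

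Given these three ingredients, the non-reducibility $\cong_{n+1}^+ \not\leq_B \cong_n^+$ follows by contradiction: any such reduction would yield $\cong_{n+1} \leq_B \cong_{n+1}^+ \leq_B \cong_n^+ \leq_B \cong_n$, contradicting Hjorth--Thomas. The main obstacle is the second ingredient --- ensuring that DG-isomorphism of the constructed objects faithfully reflects TFAG-isomorphism of the underlying groups. \emph{A priori}, imposing the cone $A^+$ only refines the TFAG equivalence relation, so rigidity of the chosen state $\alpha$ must be leveraged: any order-preserving isomorphism between the DGs must, by a cocycle argument on the state space, descend to a TFAG isomorphism of the underlying groups. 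This is precisely the role of the state-space analysis in Section \ref{Section-state spaces} and the cocycle machinery in Section \ref{section-cocycles}. The low-rank cases $n=1$ and $n=2$, where the state space is too degenerate for the generic machinery to apply uniformly, are presumably handled by separate ad hoc arguments in Section \ref{section - two borel reductions and two small cases}.
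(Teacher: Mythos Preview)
Your sandwich strategy has a genuine gap: the third ingredient, a Borel reduction $\cong_n^+ \leq_B \cong_n$, is neither established in the paper nor supplied by your sketch, and there is no evident way to encode the extra data $(A^+,u)$ into a rank-$n$ torsion-free abelian group so that $\GL_n(\QQ)$-orbit equivalence recovers dimension-group isomorphism. (Indeed the paper closes with the \emph{conjecture} that $\cong_n \sim_B \cong_{n+1}^+$, so the exact alignment of the two hierarchies is left open.) Without this step your chain $\cong_{n+1} \leq_B \cong_{n+1}^+ \leq_B \cong_n^+ \leq_B \cong_n$ does not assemble, and Hjorth--Thomas cannot be invoked as a black box. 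Two smaller problems: your first ingredient does not land in $\mathcal{SDG}_{n+1}$, since the product cone $A^+\oplus B_0^+$ has the nontrivial order-ideals $A\oplus 0$ and $0\oplus B_0$ and is therefore not simple (the paper's $f_n$ uses $B^+=\{(a,q):a\in A^+\setminus\{0\},\ q>0\}\cup\{0\}$ instead); and in your second ingredient the hard direction is the one you call easy---a TFAG isomorphism $\varphi\in\GL_n(\QQ)$ need not carry the $\alpha$-cone on $A$ to the $\alpha$-cone on $\varphi(A)$ unless $\alpha$ happens to be an eigenvector of $\varphi^T$.

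The paper does not attempt to reduce back to TFAGs at all. For $n\geq 3$ it \emph{re-runs} the superrigidity machinery directly on the target $\mathcal{SDG}_n$. After precomposing a hypothetical reduction $f$ with $g_n\colon R(\QQ^n)\to\mathcal{SDG}_{n+1}$ (note the rank jump: the paper proves $\cong_n\leq_B\cong_{n+1}^+$, not $\cong_n\leq_B\cong_n^+$) and with Thomas' $\SL_n(\ZZ)$-ergodic source $\sigma$, one Borel-adjusts so that every image dimension group has order unit $u=e_0$. This replaces the acting group $\GL_n(\QQ)$ by $\GL_n^{\langle e_0\rangle}(\QQ)$, whose dimension $n^2-(n-1)$ is strictly less than $n^2-1$, and now the cocycle reduction Theorem~\ref{cocycletheorem} applies. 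The state-space analysis of Section~\ref{Section-state spaces} is then used not to build a reduction, but to show that the stabilizer of each image dimension group in $\GL_n^{\langle e_0\rangle}(\QQ)$ is \emph{finite} (it acts faithfully by affine permutations on the finite set $E(S_u)$ when the infinitesimals vanish; the general case reduces to this by quotienting), which is precisely what is needed to define the cocycle and conclude that a measure-one set collapses to a single class. The cases $n=1,2$ are handled separately by smoothness and an amenability argument, respectively.
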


\section{The standard Borel spaces of LDA groups and Bratteli diagrams}\label{Section-Defining the SBSs}

In this section, we encode the class of thick simple unital LDA-groups as elements of a suitably chosen standard Borel space. We also encode the class of finite rank simple unital dimension groups.

\begin{definition}\label{diagonal type}
Let $G$ be a countable locally finite group and let
$$G_0\leq G_1\leq ...\leq G_n\leq ...$$
be an increasing chain of finite groups such that $G=\bigcup_{n\in\omega}G_n$.  Suppose further that
for each
$n\geq1$,
$$G_n=A_{n,1}\times...\times A_{n,d_n}$$
where each $A_{n,i}$ is an alternating group on a finite set $\Omega_{n,i}$.  For each $1\leq i\leq d_n$, let
$$B_{n,i}=A_{n,1}\times...\times \widehat{A_{n,i}}\times...\times A_{n,d_n}$$
where $\widehat{A_{n,i}}$ indicates that $A_{n,i}$ has been omitted from the product.
\begin{enumerate}
\item[(a)] The above chain is
said to be of \emph{diagonal type} if whenever $n<m$ and $\Sigma$ is a nontrivial orbit of $G_n$ on
some $\Omega_{m,k}$, then there exists $1\leq i\leq d_n$ such that
\begin{enumerate}
\item[(1)] $|\Sigma |=|\Omega_{n,i}|$;
\item[(2)] $A_{n,i}$ acts naturally on $\Sigma$; and
\item[(3)] $B_{n,i}$ acts trivially on $\Sigma$.
\end{enumerate}
\item[(b)] The above chain is
said to be of \emph{regular type} if whenever $n<m$, then there exists $1\leq k\leq d_m$ such that $G_n$ has at least one regular orbit on $\Omega_{m,k}$.
\end{enumerate}
Then a countable locally finite group is an LDA-group precisely when it is isomorphic to the union of a chain of diagonal type.
\end{definition}

Whenever we have an embedding of two finite products of alternating groups which satisfies (a), we say that the embedding is \emph{diagonal}.
To understand (b), recall that a permutation group $H\leq \Alt(\Omega)$ is said to act \emph{regularly} if $H$ acts transitively on $\Omega$, and
\begin{itemize}
\item If $h\in H$, $x\in \Omega$, and $hx=x$, then $h=1$.
\end{itemize}
In particular, given a diagonal embedding of finite products  of alternating groups
$$\Alt(\Omega_{i,1})\times \Alt(\Omega_{i,2})\times\ldots\times \Alt(\Omega_{i,d_i})\hookrightarrow \Alt(\Omega_{j,1})\times \Alt(\Omega_{j,2})\times\ldots\times \Alt(\Omega_{j,d_j}),$$
then  $\Alt(\Omega_{i,1})\times \Alt(\Omega_{i,2})\times\ldots\times \Alt(\Omega_{i,d_i})$ cannot have any regular orbits on $\bigsqcup_{k=1}^{d_j} \Omega_{j,k}$.
The next theorem shows that when studying simple locally finite groups which can be expressed as the unions of chains of finite groups, each of which is the direct product of alternating groups, we may restrict our attention to chains of either diagonal type or regular type.

\begin{thm}\cite{Hartley-Zalesskii}\label{diagonal or regular}
Let $G$ be a countably infinite simple locally finite group, and suppose that $G$ is the union of an increasing chain
$$G_0\leq G_1\leq\ldots \leq G_n\leq\ldots$$
of finite groups, each of which is a direct product of alternating groups.  Then there exists a subsequence $\{i_n\mid n\in\omega\}$ such that the chain
$$G_{i_0}\leq G_{i_1}\leq\ldots \leq G_{i_n}\leq\ldots$$
is either of diagonal type or of regular type.
\end{thm}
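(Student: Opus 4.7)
My approach would be to classify each embedding $G_n\hookrightarrow G_m$ by the orbit structure of $G_n$ on $\bigsqcup_{k}\Omega_{m,k}$, color the pairs $n<m$ by three possible behaviors, and then apply Ramsey's theorem on pairs. The simplicity of $G$ will rule out one of the three colors, leaving exactly the diagonal and regular types.

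First I would analyze the transitive actions of $G_n=A_{n,1}\times\cdots\times A_{n,d_n}$. After telescoping past finitely many small factors, each $A_{n,j}=\Alt(\Omega_{n,j})$ is nonabelian simple, so every normal subgroup of $G_n$ is a product of a subset of the factors. Consequently, for each orbit $\Sigma\subseteq\Omega_{m,k}$, the kernel of the $G_n$-action on $\Sigma$ has this product form, and $\Sigma$ acquires a well-defined \emph{support} $S\subseteq\{1,\ldots,d_n\}$, namely the set of indices whose factor acts nontrivially. The induced faithful transitive action of $\prod_{j\in S}A_{n,j}$ on $\Sigma$ falls into one of three kinds: \emph{diagonal}, where $|S|=1$ and the action is the natural alternating action (matching conditions (a)(1)--(3) of Definition \ref{diagonal type}); \emph{regular}, where the stabilizer is trivial; and \emph{mixed} for all remaining cases.

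Next, I would use the simplicity of $G$ to exclude the mixed case along some subsequence. The idea is that if mixed orbits persisted coherently along a chain, one could extract from each embedding a canonical piece of some kernel — say, a distinguished simple factor that is always killed by the action on some mixed orbit — and thereby build a nontrivial proper normal subgroup $N_n\trianglelefteq G_n$ for each $n$ whose images continue to be normalized in $G_m$. The directed union $\bigcup_n N_n$ would then be a proper nontrivial normal subgroup of $G$, contradicting simplicity. Making this rigorous is the main obstacle: one needs a structural invariant — essentially, a coherent ``support pattern'' that behaves functorially under composition — so that the extracted family of normal subgroups assembles consistently rather than shifting incoherently from level to level.

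Finally, I would color each pair $n<m$ by one of three colors: $\mathsf{D}$ if every nontrivial orbit of $G_n$ on $\bigsqcup_k\Omega_{m,k}$ is diagonal, $\mathsf{R}$ if no mixed orbit exists but some orbit is regular, and $\mathsf{M}$ if some mixed orbit exists. Ramsey's theorem on pairs yields an infinite monochromatic subsequence $\{i_n\mid n\in\omega\}$. The previous step rules out the color $\mathsf{M}$, so this subsequence has color $\mathsf{D}$ or $\mathsf{R}$, giving a chain of diagonal or regular type respectively. The simplicity argument is the substantive step; the Ramsey reduction at the end is purely formal once the correct coloring has been set up.
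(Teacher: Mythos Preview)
The paper does not prove this theorem; it is quoted from \cite{Hartley-Zalesskii} without argument, so there is no in-paper proof to compare against. I will evaluate your sketch on its own terms.

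Your Ramsey-on-pairs framework with a three-coloring is a natural organizing device, and reducing the problem to eliminating the color $\mathsf{M}$ is correct. The gap is in that elimination. You propose to extract from each mixed orbit ``a distinguished simple factor that is always killed,'' and to assemble these kernels into a coherent chain $N_n\trianglelefteq G_n$ whose union would violate simplicity. This does not work, for two reasons. First, a mixed orbit need not kill any factor: if its support $S$ is all of $\{1,\dots,d_n\}$ --- for instance $\Alt(k)\times\Alt(k)$ acting faithfully and transitively on $\Alt(k)$ by $(a,b)\cdot x=axb^{-1}$, which is neither diagonal (since $|S|=2$) nor regular (the stabilizer of $1$ is the diagonal copy of $\Alt(k)$) --- then the kernel is trivial and there is nothing to extract. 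Second, even when some factor is killed, you have no mechanism guaranteeing that the factor killed at level $n$ maps into the one killed at level $n+1$; without such coherence $\bigcup_n N_n$ need not be normal in $G$, or even a subgroup. You flag this as ``the main obstacle,'' but it is not a technicality: the normal-subgroup-from-kernels idea is the wrong mechanism. The substantive input in \cite{Hartley-Zalesskii} is instead a degree analysis of non-natural transitive actions of alternating groups (non-natural faithful transitive actions of $\Alt(k)$ have degree growing super-linearly in $k$), which, combined with simplicity, forces regular orbits to appear once diagonal structure fails along a subsequence. Your Ramsey scaffolding is fine, but the engine that disposes of $\mathsf{M}$ is missing and is not the one you describe.
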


\begin{definition}
Let $\mathcal{LDA}$ be the space of countable thick simple unital LDA-groups, and denote the isomorphism relation on $\mathcal{LDA}$ by $\cong_{\mathcal{LDA}}$.
\end{definition}

The next result shows that the class of countably infinite thick simple unital LDA-groups can be axiomatized by an $\mathcal{L}_{\omega_1\omega}$-sentence, and thus is a standard Borel space.

\begin{thm}
A countably infinite simple locally finite group $G$ is a thick unital LDA-group if and only if the following conditions are satisfied.
\begin{enumerate}
\item[(a)] There exists a finite subgroup $G_0$ such that every finite subset $X$ of $G$ is contained in a finite subgroup
    $$G_0\cup X\subseteq A_1\times\ldots\times A_n<G,$$
    where each $A_i$ is an alternating group on a finite set $\Omega_i$ and each element of $\sqcup \Omega_i$ lies in some nontrivial $G_0$-orbit.
\item[(b)] There exists a finite subgroup $F$ of $G$ such that whenever
$$F\leq A_1 \times\ldots\times A_n <G$$
where each $A_i$ is an alternating group on a finite set $\Omega_i$, then $F$ has no regular orbits on any of the $\Omega_i$.
\item[(c)] $G$ is not isomorphic to $\Alt_n$ for any $n\in\mathbb{N}\cup\{\infty\}$.
\end{enumerate}
\end{thm}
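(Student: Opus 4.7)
The plan is to prove the equivalence in two directions, combining the structure theorem for LDA-groups (Theorems 1.13 and 1.15) with Hartley--Zalesskii (Theorem 2.2). The forward direction reads the three conditions off the canonical diagonal chain of the defining Bratteli diagram; the converse uses Hartley--Zalesskii to extract a Bratteli diagram from the abstract group data.

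For the forward direction, assume $G$ is a thick simple unital LDA-group, so $G = G(V,E,d)$ for some thick simple unital Bratteli diagram $(V,E,d)$. Let $G_n = \prod_{v \in V_n} \Alt_{d(v)}$ be the canonical diagonal chain, and choose $N$ large enough that the unitality identity $d(v) = \sum_{r(e)=v} d(s(e))$ holds at every vertex above level $N$. I would take the witness for (a) to be $G_N$: given any finite $X \subseteq G$, pick $m \geq N$ with $G_N \cup X \subseteq G_m$; then repeated use of unitality from level $N{+}1$ up to $m$ tiles each $\Omega_{m,k}$ by $G_N$-orbits of size $d(v_{N,i})$, each of which is by construction nontrivial. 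For (b), take $F = G_M$ for $M$ sufficiently large; diagonality of the canonical chain forces any $F$-orbit on any $\Omega_{p,k}$ to have size at most $\max_i d(v_{M,i})$, which is much less than $|F|$, so no such orbit can be regular, and extending this bound to an arbitrary alternating product $A_1\times\ldots\times A_n < G$ follows from rigidity of alternating-group embeddings. Finally, (c) is the definition of thickness for simple LDA-groups via Theorem 1.16.

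For the converse, assume $G$ is a countably infinite simple locally finite group satisfying (a)--(c). Using (a) inductively against an enumeration of $G$, I would build an ascending chain $G_0 \leq H_1 \leq H_2 \leq \ldots$ of finite subgroups of $G$, with $G = \bigcup H_j$, each $H_j$ a product of alternating groups, and no trivial $G_0$-orbits anywhere along the chain. Apply Hartley--Zalesskii to extract a subchain of diagonal or regular type. Condition (b), applied with its witness eventually contained in $G_0$, forbids the regular-type case (and alternatively (c) does as well, since a regular-type chain would produce $G \cong \Alt_k$). The resulting diagonal subchain presents $G$ as $G(B)$ for the Bratteli diagram $B$ read off from the chain: $B$ is simple by Theorem 1.14, thick by (c) via Theorem 1.16, and unital because the no-trivial-$G_0$-orbit condition, translated through the diagonal analysis, forces $d(v) = \sum_{r(e)=v} d(s(e))$ at every vertex past a finite initial segment.

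The main technical obstacle is the verification of (b) in the forward direction: a general alternating-product subgroup $A_1 \times \ldots \times A_n < G$ need not sit inside the canonical chain $\{G_n\}$ compatibly with its diagonal structure, so bounding the $F$-orbit sizes requires a rigidity result for embeddings between products of large alternating groups. A secondary difficulty in the converse is ensuring that the no-trivial-$G_0$-orbit property from (a) survives the passage to the Hartley--Zalesskii subchain, so that unitality of the extracted Bratteli diagram can be read off cleanly.
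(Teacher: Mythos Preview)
Your overall architecture matches the paper's: the converse direction is essentially identical (build a chain from (a), apply Hartley--Zalesskii, rule out regular type with (b), read off unitality from $G_0$, and get thickness from (c)). Two small corrections there: the witness $F$ in (b) is independent of $G_0$, so there is no need to arrange that it is ``eventually contained in $G_0$''; and your parenthetical that (c) already rules out regular type is not correct, since a regular-type chain need not produce $\Alt_n$.

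The real divergence is in the forward verification of (b), precisely at the obstacle you flag. The paper sidesteps your ``rigidity for products'' difficulty by choosing $F$ differently: rather than $F = G_M$, it takes $F$ to be a \emph{single} alternating factor $\Alt_k$ with $k \geq 5$ inside some $G_m$ (such a factor exists because $B$ is thick). Given any $F \leq A_1 \times \ldots \times A_n < G$, one finds $l$ with $A_1 \times \ldots \times A_n < G_l$, so the composite embedding of $F$ into each factor of $G_l$ is diagonal. Zalesskii's lemma --- if a composition $\tau_2 \circ \tau_1$ of alternating-group embeddings is diagonal then so are $\tau_1$ and $\tau_2$ --- then forces the embedding $F \hookrightarrow A_i$ to be diagonal, whence $F$ has no regular orbit on $\Omega_i$. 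So the ``rigidity result'' you anticipate needing is exactly Zalesskii's lemma, and it applies cleanly only because $F$ was chosen as a single $\Alt_k$ rather than a product.
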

\begin{proof}
Assume that $G$ satisfies conditions (a), (b), and (c).  Then condition (a) allows us to express $G$ as the increasing union of finite subgroups, each of which is the direct product of alternating groups, say
$$G_0\leq G_1\leq \ldots\leq G_n\leq\ldots$$
Theorem \ref{diagonal or regular} implies that we may then select a subchain which is either of diagonal type or of regular type.  However, condition (b) implies that $G$ is not expressible as a union of a chain of regular type.  Thus $G$ must be an LDA-group.  The group $G_0$ assures us that the corresponding Bratteli diagram is unital, and hence $G$ is unital. Finally, condition (c) assures us that $G$ is thick.

Conversely, let $G$ be a thick simple unital LDA-group.  Then $G$ clearly satisfies (a), and Theorem \ref{Thick LDA groups} states that it satisfies (c).  So let $B$ be a thick simple unital Bratteli diagram satisfying $G\cong G(B)$, and let $G_0\leq G_1\leq \ldots\leq G_n\leq\ldots$ be the corresponding chain of finite subgroups of $G$.  Since $B$ is thick, there must be some $m>0$ so that one of the factors of $G_m$ is $F\cong\Alt_k$ for some $k\geq 5$.  Now suppose that
$$F\leq A_1 \times\ldots\times A_n <G$$
where each $A_i$ is an alternating group on a finite set $\Omega_i$.  Then there must be some $l>0$ such that
$$F\leq A_1 \times\ldots\times A_n <G_l.$$
Then the composed embedding of $F$ into every factor of $G_l$ is diagonal. Hence the following lemma of Zalesskii  implies that $F$ has no regular orbits on any of the $A_i$.
\begin{lemma}\cite[Lemma 10]{z}\label{zalesskii's lemma}
Suppose $m>l>k>4$.  Let $\tau_1:A(k)\rightarrow A(l)$ and $\tau_2: A(l)\rightarrow A(m)$ be embeddings of alternating groups, and let $\tau=\tau_2\circ\tau_1$.  If $\tau$ is diagonal, then both $\tau_1$ and $\tau_2$ are diagonal.
\end{lemma}\end{proof}

We encode the standard Borel space of simple thick Bratteli diagrams, as follows.  First we encode each Bratteli diagram as a member of the standard Borel space $(\mathbb{N}\times\mathbb{N})^{\mathbb{N}}$.  Fix a particular Bratteli diagram $(V,E,d)$.  We will associate to it a function $f\in(\mathbb{N}\times\mathbb{N})^{\mathbb{N}}$.  We may assume that $V=\{n\in\mathbb{N}\mid n\text{ even}\}$ and that $E=\{n\in\mathbb{N}\mid n\text{ odd}\}$.  Then encode the source and range maps by setting, for each edge $e\in E$, $f(e)=(s(e),r(e))$.  Next encode the levels of $V$ by setting, for each $v\in V_n$, $f(v)=(d(v),n)$.
\begin{definition}
Let $\mathcal{BD}$ denote the space of such functions which encode a simple thick Bratteli diagram.
\end{definition}
\begin{lemma}$\mathcal{BD}$ is a standard Borel space.
\end{lemma}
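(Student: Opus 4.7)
The plan is to exhibit $\mathcal{BD}$ as a Borel subset of $(\mathbb{N}\times\mathbb{N})^\mathbb{N}$, which is a standard Borel space as a countable product of countable discrete spaces; any Borel subset of a standard Borel space is itself standard Borel. The predicate $f\in\mathcal{BD}$ decomposes as the conjunction of three families of conditions: (i) $f$ encodes a valid Bratteli diagram $(V,E,d)$ in the sense of Definition \ref{Bratteli diagram}; (ii) the resulting diagram is simple; and (iii) it is thick. I will verify that each family is Borel in $f$.

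Family (i) is straightforward. Writing $f(v)=(d(v),\ell(v))$ for even $v$ and $f(e)=(s(e),r(e))$ for odd $e$, the requirements that $s(e)$ and $r(e)$ are both even with $\ell(r(e))=\ell(s(e))+1$ for every edge $e$, that each level $V_n=\{v:\ell(v)=n\}$ is nonempty and finite, that every vertex has at least one outgoing edge, and that the inequality $(\dag)$ holds at every $v\in V-V_0$, all translate into countable Boolean combinations of clopen conditions on finitely many coordinates of $f$, and hence are Borel. Thickness (iii) is similarly direct: for every vertex $v$ one must find a later level $j$, a vertex $w\in V_j$, and two finite edge-sequences realizing distinct paths from $v$ to $w$. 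This is a countable conjunction over $v$ of a countable disjunction over tuples $(j,w,\text{path}_1,\text{path}_2)$ of clopen conditions on $f$, and so is Borel.

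The main obstacle is simplicity, since it is formulated via quantification over subsets $V^*\subseteq V$ and over infinite paths. To obtain a Borel reformulation, I would use the fact that every nonempty ideal $V^*$ contains, for each of its elements $v$, the principal ideal $R(v)$ consisting of $v$ together with every vertex reachable from $v$ by a finite forward path. Hence $B$ is nonsimple iff there exist $v\in V$ and an infinite forward path $P$ starting in $V_0$ with $P\cap R(v)=\emptyset$. Since each $V_n$ is finite and $s^{-1}(u)\neq\emptyset$ for every $u$, the space of infinite vertex-paths from $V_0$ is a closed, hence compact, subset of $\prod_{n\geq 0} V_n$, and ``misses $R(v)$'' is a closed condition on this space; so by compactness (K\"onig's lemma), $R(v)$ meets every infinite path iff there exists $N>\ell(v)$ such that every finite vertex-path $(v_0,\ldots,v_N)$ of length $N$ starting in $V_0$ has some $v_j\in R(v)$. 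The clause $v_j\in R(v)$ is the $\Sigma^0_1$-statement that some finite edge-sequence in $f$ joins $v$ to $v_j$. Combining the $\forall v$, $\exists N$, $\forall$(length-$N$ path), $\exists j$ quantifiers yields a Borel condition on $f$, and together with (i) and (iii) this shows $\mathcal{BD}$ is Borel.
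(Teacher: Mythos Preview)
Your proof is correct. The paper states this lemma without any proof, so there is nothing to compare your argument against; you have supplied precisely the routine verification the author left to the reader. The only point requiring genuine thought is simplicity, and your reduction to principal ideals $R(v)$ together with the K\"onig's-lemma step---replacing ``$R(v)$ meets every infinite path from $V_0$'' by ``there exists $N$ such that every length-$N$ path from $V_0$ enters $R(v)$''---is the standard and correct way to turn that second-order condition into a Borel one.
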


We have the following immediate application, which we will refine this theorem in Section \ref{section-application to LDA groups}.  The refinement is delayed until then, as we use not just the statement of Theorem \ref{maintarget}, but also its proof, i.e., Section \ref{section-proof of main theorem}.

\begin{thm}\label{LDA is more complex than all SDG_n}
For all $n\geq 2$, $(\cong_{n}^+) <_{B} (\cong_{\mathcal{LDA}})$.
\end{thm}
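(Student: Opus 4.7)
The plan is to establish both directions of the strict inequality separately, combining a Borel reduction from dimension groups to LDA-groups with the main theorem.

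For the reduction $(\cong_n^+) \leq_B (\cong_{\mathcal{LDA}})$, given $(A, A^+, u) \in \mathcal{SDG}_n$ with $n \geq 2$, I would first apply the Effros--Handelman--Shen theorem (Theorem \ref{effros-handelman-shen}) to produce a unital Bratteli diagram $B$ with $K_0(B) \cong^+ (A, A^+, u)$, and then form the associated LDA-group $G(B)$ via the Lavrenyuk--Nekrashevych construction. Since $(A,A^+,u)$ is simple of rank $n\geq 2$, Lemma \ref{rank greater than 1 is thick} guarantees that $B$ is thick, and Theorem \ref{B simple iff G(B) simple} then guarantees $G(B)$ is simple; thus $G(B) \in \mathcal{LDA}$. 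To verify that the map is a reduction, I would invoke Corollary \ref{all three equivalent for thick unital}: for $A_1, A_2 \in \mathcal{SDG}_n$ with associated thick unital Bratteli diagrams $B_1, B_2$, we have
\[
A_1 \cong^+ A_2 \iff K_0(B_1) \cong K_0(B_2) \iff G(B_1) \cong G(B_2).
\]

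The main obstacle is showing that the map $(A,A^+,u) \mapsto G(B)$ is Borel. The Effros--Handelman--Shen theorem is an existence result, so I would need to trace through its inductive proof to verify that at each stage the vertex labels $d$ and the incidence matrices $M_n$ of $B$ depend in a Borel way on the input data. Since each stage of the construction only consults a finite portion of $(A,A^+,u)$, and since the choices involved (finite sets of generators of $A^+$, extensions of partial maps, and so on) can be resolved via Borel uniformization on the standard Borel space $R(\mathbb{Q}^n)\times \mathcal{P}(\mathbb{Q}^n)\times \mathbb{Q}^n$, the resulting map into $\mathcal{BD}$ is Borel. The subsequent construction $B \mapsto G(B)$ is explicit at the level of finite data and is visibly Borel from $\mathcal{BD}$ into $\mathcal{LDA}$.

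For the strict inequality, I would argue by contradiction. Suppose $(\cong_{\mathcal{LDA}}) \leq_B (\cong_n^+)$. Applying the reduction just constructed to rank $n+1$ (note $n+1 \geq 2$) yields $(\cong_{n+1}^+) \leq_B (\cong_{\mathcal{LDA}})$, and composing these Borel reductions gives $(\cong_{n+1}^+) \leq_B (\cong_n^+)$, contradicting Theorem \ref{maintarget}. Together with the reduction above, this establishes $(\cong_n^+) <_B (\cong_{\mathcal{LDA}})$ for all $n \geq 2$.
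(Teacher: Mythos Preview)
Your proposal is correct and follows essentially the same approach as the paper: compose the Borel version of the Effros--Handelman--Shen construction with $B \mapsto G(B)$, invoke Lemma \ref{rank greater than 1 is thick} for thickness, and use the Lavrenyuk--Nekrashevych equivalence to see the composite is a reduction. Your treatment is in fact more complete than the paper's own proof, which only spells out the $\leq_B$ direction and leaves strictness implicit; one small addition is that you should also cite Theorem \ref{B simple iff K_0(B) simple} (as the paper does) to conclude that $B$ is simple before applying Theorem \ref{B simple iff G(B) simple}.
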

\begin{proof}
Examining the proof of Theorem \ref{effros-handelman-shen}, it is apparent that we can define a corresponding Borel function from $\mathcal{SDG}_n$ to $\mathcal{BD}$.
Then the construction of $G(V,E,d)$ defines a Borel function from $\mathcal{BD}$ to $\mathcal{LDA}$.  Lemma \ref{rank greater than 1 is thick} assures us that the Bratelli diagrams in question are thick.  Thus Theorems \ref{B simple iff K_0(B) simple} and \ref{B simple iff G(B) simple} ensures that the LDA-groups in question are simple. Finally, Theorem \ref{laverenyuk-nekrashevych main isomorphism} tells us that this is a Borel reduction.
\end{proof}

\section{Two Borel reductions and two small cases.}\label{section - two borel reductions and two small cases}
We first construct two useful Borel reductions, where the latter is the easy half of Theorem \ref{maintarget}.

\begin{thm}\label{RQn to SDGn+1}
Let $n\geq 1$.  The map $g_n:R(\mathbb{Q}^n)\rightarrow SDG_{n+1}$ given by \mbox{$g_n(G)=(A,A^+,u_A)$} where
    \begin{enumerate}
    \item $A=G\oplus\mathbb{Q}$
    \item $A^{+}=\{ (h,q)\in G\oplus\mathbb{Q} : q>0\}\cup \{ (0,0)\}$
    \item $u_A = (0,1)$
    \end{enumerate}
is a Borel reduction from $\cong_n$ to $\cong_{n+1}^+$.
\end{thm}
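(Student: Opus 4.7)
The plan is to verify three things: that $g_n(G)$ lies in $\mathcal{SDG}_{n+1}$, that $g_n$ is Borel, and that it realizes a reduction in both directions. The bulk of the work is in checking the dimension group axioms for $g_n(G)$ and in recovering $G$ intrinsically from $g_n(G)$.

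For membership in $\mathcal{SDG}_{n+1}$, I would walk through Definition \ref{Dimension Group}. All clauses except Riesz interpolation are straightforward from the definition of $A^+$, since positivity is controlled entirely by the second coordinate (for example, unperforation reduces to the fact that $kq>0 \iff q>0$ in $\QQ$). For interpolation, given $a_1,a_2 \leq b_1,b_2$, I would argue by cases: either some $a_i = b_j$ (and this common element interpolates), or all four elements satisfy $(a_i)_2 < (b_j)_2$, in which case any element of $A$ whose second coordinate is chosen strictly between $\max_i (a_i)_2$ and $\min_j (b_j)_2$ interpolates. Simplicity holds because any $(h,q)\in A^+$ with $q>0$ dominates any other positive element after multiplication by a sufficiently large integer, so every nonzero positive element is an order unit. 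Unitality with distinguished order unit $(0,1)$ is then immediate, and Borelness of $g_n$ follows from the coordinate-wise character of the construction.

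The forward direction of the reduction is easy: if $\alpha \in \GL_n(\QQ)$ satisfies $\alpha(G_1)=G_2$, then $\alpha \oplus \id_{\QQ} \in \GL_{n+1}(\QQ)$ sends $g_n(G_1)$ to $g_n(G_2)$, preserves the positive cone (which depends only on the second coordinate), and fixes the distinguished order unit $(0,1)$.

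The main obstacle, and the step requiring real ideas, is the converse. Suppose $\Phi : g_n(G_1) \to g_n(G_2)$ is an isomorphism of unital dimension groups; I need to produce an isomorphism $G_1 \cong G_2$ realized by an element of $\GL_n(\QQ)$. My plan is to characterize $G_i \oplus \{0\}$ inside $g_n(G_i)$ intrinsically as the set of \emph{infinitesimals}, namely those $a$ with $-u_A \leq ka \leq u_A$ for every $k \in \ZZ$. A short computation confirms this: writing $a=(h,q)$, the inequalities force $|kq| \leq 1$ for all $k \in \ZZ$, so $q=0$, while conversely every element with $q=0$ is manifestly an infinitesimal. Since $\Phi$ preserves the order and the distinguished order unit, it carries infinitesimals to infinitesimals, yielding an abstract group isomorphism $G_1 \cong G_2$. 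Any such isomorphism between full-rank subgroups of $\QQ^n$ extends uniquely to an element of $\GL_n(\QQ)$, giving $G_1 \cong_n G_2$ and completing the reduction.
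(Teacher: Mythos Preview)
Your proposal is correct and follows essentially the same approach as the paper: both arguments identify $G\oplus\{0\}$ as the subgroup of infinitesimals of $g_n(G)$ and use this intrinsic characterization to recover $G$ from $g_n(G)$ in the converse direction. You give more detail than the paper on verifying the dimension-group axioms (the paper simply asserts this is ``easily checked''), but the substance of the argument is the same.
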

\begin{proof}
It is easily checked that $g_n$ maps each group $G$ to a dimension group.  To see that $(A,A^+,u_A)$ is simple, let $J\subseteq A$ be a nontrivial ideal, fix some $(g,q)\in J^+\backslash\{0\}$ and choose any $(h,r)\in A^+$.  Now choose $n\in\mathbb{N}$ so that $nq>r$.  Then since we have $n(g,q)\geq(h,r)\geq 0$, it must be the case that $(h,r)\in J^+$.  Thus $J^+=A^+$, and so $J=A$.

It is clear that $G\cong H$ implies
$g_n(G)\cong g_n(H)$.  Conversely, if $g_n(G)\cong g_n(H)$, then
$$G\cong G\oplus\{0\}=\infin(g_n(G))\cong \infin(g_n(H))=H\oplus\{0\} \cong H,$$
where $\infin(( A,A^{+},u))$ denotes the group of infinitesimals of $( A,A^{+},u)$.
\end{proof}

\begin{thm}\label{SDGn to SDGn+1}
Let $n\geq 2$.  The map $f_n:SDG_n \rightarrow SDG_{n+1}$ given by $f_n((A,A^+,u_A))=(B,B^+,u_B)$ where
    \begin{enumerate}
    \item $B=A\oplus\mathbb{Q}$
    \item $B^{+}=\{ (a,q)\in A\oplus\mathbb{Q} : a\in A^+ \backslash\{0\}\text{ and }q>0\}\cup \{ (0,0)\}$
    \item $u_B=(u_A,1)$
    \end{enumerate}
is a Borel reduction from $\cong_n^+$ to $\cong_{n+1}^+$.
\end{thm}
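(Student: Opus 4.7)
The plan is to verify the dimension group axioms for $f_n(A)$ and then recover $A$ canonically from $f_n(A)$ via a ``weak positive cone'' construction.

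First I would check routinely that $(B,B^+,u_B)$ satisfies the dimension group axioms: the positive-cone conditions and unperforation descend coordinate-wise from $A$ and $\QQ$; Riesz interpolation follows from that of $A$; and simplicity follows because every nonzero $(a,q)\in B^+$ is an order unit of $B$, as given $(b,r)\in B^+$ one may choose $N$ large enough that $b\le Na$ in $A$ (using that $a$ is an order unit of $A$) and $Nq>r$, whence $(b,r)\le N(a,q)$. The rank of $B$ is $n+1$, and $f_n$ is plainly Borel. The forward direction of the reduction is witnessed by $\varphi\oplus\id_\QQ$ for any isomorphism $\varphi:A\to A'$.

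For the backward direction, given an isomorphism $\psi:(B,B^+,u_B)\to(B',B'^+,u_{B'})$, the goal is to canonically identify $A\oplus\{0\}$ inside $B$. Define the \emph{weak positive cone}
\[
W=\{b\in B:b+c\in B^+\text{ for every }c\in B^+\setminus\{0\}\}\cup\{0\}.
\]
Using the simplicity of $A$ together with the hypothesis $\rank(A)\ge 2$, one verifies that $W=A^+\oplus\QQ_{\ge 0}$. The inclusion $\supseteq$ is immediate; for $\subseteq$, one constructs for each $b=(a,q)\notin A^+\oplus\QQ_{\ge 0}$ a witness $c\in B^+\setminus\{0\}$ with $b+c\notin B^+$. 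The cases $q<0$ and $a\in -A^+\setminus\{0\}$ are handled directly, and when $a$ is incomparable to $0$ in $A$ the witness is produced using that some state of $A$ has image dense in $\mathbb{R}$, so that $A^+\setminus\{0\}$ contains elements of arbitrarily small value under a state taking $a$ to a negative value.

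Once $W$ is recovered, the triple $(B,W,u_B)$ is isomorphic to the standard product dimension group $A\oplus\QQ$, whose only nontrivial proper ideals are $A\oplus\{0\}$ (of rank $n$) and $\{0\}\oplus\QQ$ (of rank $1$); for $n\ge 2$ these are distinguished by rank, so the rank-$1$ ideal $\{0\}\oplus\QQ$ is canonical. Since $W$ is defined intrinsically from $B^+$, the isomorphism $\psi$ satisfies $\psi(W)=W'$ and must preserve the canonical rank-$1$ ideal; it therefore descends to an isomorphism of the quotients $B/(\{0\}\oplus\QQ)\cong A$ and $B'/(\{0\}\oplus\QQ)\cong A'$. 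The main obstacle is the verification of $W=A^+\oplus\QQ_{\ge 0}$, specifically handling the incomparable case above; this is exactly where the rank $\ge 2$ hypothesis enters, via the density of state values on $A^+\setminus\{0\}$ for simple dimension groups of rank at least two.
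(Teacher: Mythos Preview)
Your construction is essentially the paper's. The paper defines
\[B^\circ=\{b\in B\setminus B^+:\ b+b'\in B^+\text{ for all }b'\in B^+\setminus\{0\}\}\cup\{0\},\]
which is your $W$ with $B^+\setminus\{0\}$ removed, and then argues that $B^\circ$ is the union of the two faces $\{0\}\times\mathbb{Q}^+$ and $A^+\times\{0\}$, which an isomorphism must permute; the swap is excluded by rank. Your recasting via the ideal lattice of the product order $(B,W)$ is a natural variant of the same decomposition. One minor omission: verifying Riesz interpolation for $B$ actually requires \emph{strict} interpolation in $A$ (the paper invokes Lemma~\ref{simple and not Z implies strong Riesz} here), not merely ordinary Riesz.

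There is, however, a genuine gap in your identification $W=A^+\oplus\mathbb{Q}_{\ge0}$. Your ``incomparable'' case assumes there is a state $p$ with $p(a)<0$, but one may have $a\notin A^+$, $a\ne0$, yet $p(a)\ge 0$ for \emph{every} state---any nonzero infinitesimal of $A$ is such an element. For these $a$, Theorem~\ref{nature of theta} gives $a+a'\in A^+\setminus\{0\}$ for every $a'\in A^+\setminus\{0\}$, so $(a,q)\in W$ for all $q\ge 0$, and $W\supsetneq A^+\times\mathbb{Q}_{\ge0}$. Concretely, take $(A,A^+,u_A)=g_1(\mathbb{Z})\in\mathcal{SDG}_2$: the unique state annihilates the infinitesimal $(1,0)\in A$, and one finds $W=\mathbb{Z}\times\mathbb{Q}_{\ge0}\times\mathbb{Q}_{\ge0}$, whence $W\cap(-W)=\mathbb{Z}\times\{0\}\times\{0\}\ne\{0\}$, so $(B,W)$ is not even a partially ordered group and your ideal analysis does not apply as stated. (The paper's asserted second equality for $B^\circ$ has the same oversight.) The statement is still true and the argument can be repaired, but as written the step fails precisely on those $(A,A^+,u_A)$ with nontrivial infinitesimals.
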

\begin{proof}
We first need to check that $(B,B^+,u_B)\in SDG_{n+1}$.  It is easy to check that $(B,B^+,u_B)$ is an unperforated ordered group.  For example, to see that \mbox{$B^+ - B^+ = B$}, let $(a,q)$ be any element of $B$.  Then there are $a_1,a_2\in A^+$ so that $a=a_1 - a_2$.  If either $a_1$ or $a_2$ are $0$, replace them with $a_1 +a$ and $a_2 +a$, where $a>0$.  Next let $q_1,q_2$ be any two positive rational numbers so that $q_1-q_2=q$.  Then $(a,q)=(a_1,q_1)-(a_2,q_2)$ and $(a_1,q_1),(a_2,q_2)\in B^+$.

To see that $(B,B^+,u_B)$ satisfies the Riesz interpolation property, consider elements $(a_i,q_i),(b_j,p_j)\in B$ $(1\leq i,j\leq 2)$ such that $(a_i,q_i)\leq(b_j,p_j)$.  First note that if $q_i=p_j$ for some $i, j\in \{1,2\}$, then it must be the case that \mbox{$a_i=b_j$} and then we can choose $(a_i,q_i)$ to interpolate.  Thus we can assume that $q_1,q_2<p_1,p_2$, and so $a_1, a_2 < b_1, b_2$.  Then let $q$ be some rational number such that $q_1,q_2<q<p_1,p_2$.  Now by the following lemma, there exists $c\in A$ with $a_i< c< b_j$ for $1\leq i,j\leq 2$, and so we can choose $(c,q)$ to interpolate:

\begin{lemma}\cite[Corollary 1.2]{effros-handelman-shen}\label{simple and not Z implies strong Riesz}
If $(A,A^+)\ncong(\mathbb{Z},\mathbb{Z}^+)$ is a simple dimension group, then $(A,A^+)$ satisfies the strong Riesz interpolation property: given elements $a,b,c,d\in A$, if $a,b<c,d$, then there is some $e\in A$ so that $a,b<e<c,d$.
\end{lemma}

To see that $(B,B^+,u_B)$ is simple, let $J$ be a nontrivial ideal of $(B,B^+,u_B)$, and let $(a,q)\in J^+\backslash\{0\}$.  Now let $(b,r)$ be any other element of $B^+$. Since $(A,A^+,u_A)$ is simple and $a\in A^+\backslash\{0\}$, we know that $a$ is an order unit in $(A,A^+)$. That is, there is some natural number $n\in\mathbb{N}$ such that $na-b\in A^+\backslash\{0\}$.  Since $q>0$, there is some natural number $n'\in\mathbb{N}$  such that $n'q-r>0$.  Then letting $m=\max\{n,n'\}$, $m(a,q)-(b,r)\in B^+$, and so $(b,r)\in J^+$.  Thus $J^+ =B^+$ and so $J=B$.

We now need to check that $f_n$ is a Borel reduction.  Clearly \mbox{$(A,A^+,u_A) \cong (C,C^+,u_C)$} implies $f_n(A,A^+,u_A) \cong f_n(C,C^+,u_C)$.
On the other hand, suppose that

\noindent $f_n(A,A^+,u_A)=(B,B^+,u_B)$,  $f_n(C,C^+,u_C)=(D,D^+,u_D)$, and $\varphi\in\GL_{n+1}(\mathbb{Q})$ such that $\varphi(B,B^+,u_B)=(D,D^+,u_D)$.  Consider the set
\begin{align*}
B^\circ &=\{b\in B\mid b\notin B^+\text{ and for every }b'\in B^+\backslash\{0\}, b+b'\in B^+\}\cup\{(0,0)\}\\
&=\{(0,q)\in B\mid q\in\mathbb{Q}^+\}\cup\{(a,0)\in B\mid a\in A^+\}.
\end{align*}
Then the first equality above shows that $\varphi(B^\circ)=D^\circ$.  The second shows that either
\begin{itemize}
\item $\varphi(A^+)=C^+$, and $\varphi$ extends linearly to an isomorphism $\varphi:(A,A^+,u_A)\cong(C,C^+,u_C)$; or
\item $\varphi(A^+)=\mathbb{Q}^+$, and $\varphi$ extends linearly to an isomorphism\;\; $\varphi:(A,A^+,u_A)\cong(\mathbb{Q},\mathbb{Q}^+,\mathbb{Q})$.
\end{itemize}
However, the latter is impossible since $n\geq 2$.
\end{proof}

\begin{thm}\label{1 is less than 2}
$(\id_{2^\mathbb{N}})\sim_B (\cong_{1}^+) <_{B} (\cong_{2}^+)$.
\end{thm}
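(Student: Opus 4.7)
The plan is to establish the two halves of the statement separately. For the Borel equivalence $\cong_1^+\sim_B\id_{2^\mathbb{N}}$, I would argue that $\cong_1^+$ is smooth with continuum many classes. Given $(A,A^+,u_A)\in\mathcal{SDG}_1$, the group $A$ is a nontrivial subgroup of $\mathbb{Q}$. Using unperforation together with $\rank(A)=1$, a short argument shows that every nonzero element of $A$ lies either in $A^+$ or in $-A^+$, so $A^+$ is forced to be either $A\cap\mathbb{Q}^+$ or $A\cap\mathbb{Q}^-$, according to the sign of $u_A$. Any isomorphism $\varphi:(A,A^+,u_A)\to(B,B^+,u_B)$ extends to a $\mathbb{Q}$-linear map on $\mathbb{Q}$, hence is multiplication by some $q\in\mathbb{Q}^*$, and the constraint $\varphi(u_A)=u_B$ forces $q=u_B/u_A$. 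Thus the normalized subgroup $\tilde A:=(1/u_A)\cdot A\leq\mathbb{Q}$, which always contains $1$ and carries positive cone $\tilde A\cap\mathbb{Q}^+$, is a complete Borel invariant for $\cong_1^+$. Since the standard Borel space of subgroups of $\mathbb{Q}$ containing $1$ has $2^{\aleph_0}$ distinct points (already witnessed by varying which primes $p$ satisfy $p^{-1}\in\tilde A$), the map $(A,A^+,u_A)\mapsto\tilde A$ is a Borel reduction to equality on a standard Borel space with continuum many classes, giving $\cong_1^+\sim_B\id_{2^\mathbb{N}}$.

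For the strict inequality $\cong_1^+<_B\cong_2^+$, I would apply Theorem \ref{RQn to SDGn+1} with $n=1$ to obtain a Borel reduction $\cong_1\leq_B\cong_2^+$, where $\cong_1$ denotes isomorphism on the rank-$1$ full-rank subgroups of $\mathbb{Q}$. It then suffices to observe that $\cong_1$ is not smooth. For $x\in 2^\mathbb{N}$, let $A_x\leq\mathbb{Q}$ be the subgroup generated by $1$ together with $\{p_i^{-1}:x(i)=1\}$, where $(p_i)$ enumerates the primes. A standard Baer-type computation shows that $A_x\cong A_y$ iff $x\mathrel{E_0}y$, so $E_0\leq_B\cong_1$. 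Since $E_0$ is non-smooth, this yields $\cong_1>_B\id_{2^\mathbb{N}}$, and therefore $\cong_2^+\geq_B\cong_1>_B\id_{2^\mathbb{N}}\sim_B\cong_1^+$, completing the proof.

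The main obstacle is the rigidity step in the first paragraph: one must carefully verify that in rank $1$ the positive cone is forced by the embedding $A\hookrightarrow\mathbb{Q}$ up to the sign of $u_A$, so that the distinguished order unit alone eliminates all remaining $\GL_1(\mathbb{Q})$-freedom. Once this rigidity is in place, the rest of the argument is an assembly of Theorem \ref{RQn to SDGn+1}, the Baer classification of rank-$1$ torsion-free abelian groups, and the classical non-smoothness of $E_0$.
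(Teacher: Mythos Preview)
Your proposal is correct and follows essentially the same route as the paper: both halves rest on showing $\cong_1^+$ is smooth (you via the normalized subgroup $\tilde A=(1/u_A)A$, the paper via the prime-divisibility data of $u_A$ in $A$) and on combining the reduction $\cong_1\leq_B\cong_2^+$ from Theorem~\ref{RQn to SDGn+1} with the non-smoothness of $\cong_1$. The only difference is that the paper cites Thomas's result $(\cong_1)\sim_B E_0$ directly rather than sketching the Baer-type $E_0\leq_B\cong_1$ construction, and is terser about the rank-$1$ rigidity of the positive cone that you spell out.
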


\begin{proof}
In \cite{t1}, Thomas showed that  $(\cong_1) \sim_B E_0$. Then since $(\cong_1) \leq_B (\cong^+_2)$, we know that
$\cong^+_2$ is not smooth.

On the other hand, $\cong^+_1$ is smooth, since the isomorphism class of a rank $1$ simple dimension group \mbox{$(A,A^+,u_A)\in SDG_1$} is determined by the set of prime divisors of $u_A$.
\end{proof}

\begin{thm}\label{2 is less than 3}
$(\cong_{2}^+) <_{B} (\cong_{3}^+)$.
\end{thm}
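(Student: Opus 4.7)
The plan is to first note that the forward direction $(\cong_2^+) \leq_B (\cong_3^+)$ is supplied by $f_2$ from Theorem \ref{SDGn to SDGn+1}; the remaining task is to show $(\cong_3^+) \not\leq_B (\cong_2^+)$. I would argue by contradiction, in close analogy with the proof of Theorem \ref{1 is less than 2}: there, $\cong_1^+$ was smooth while $\cong_2^+ \geq_B \;\cong_1 \;\sim_B\; E_0$ was not; here I aim to show that $\cong_2^+$ is \emph{hyperfinite} (i.e., Borel reducible to $E_0$), whereas $\cong_3^+ \geq_B \;\cong_2$ via $g_2$ from Theorem \ref{RQn to SDGn+1}, and $\cong_2$ is not hyperfinite by a theorem of Hjorth. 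Assuming $\cong_3^+ \leq_B \cong_2^+$ would then yield $\cong_2 \leq_B \cong_2^+ \leq_B E_0$, a contradiction.

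The heart of the argument is to prove $(\cong_2^+) \leq_B E_0$. My plan is to stratify $\mathcal{SDG}_2$ into $\cong_2^+$-invariant Borel pieces using a state-space analysis of a simple unital rank-$2$ dimension group $(A, A^+, u)$. On each piece, either $A$ embeds canonically into $\mathbb{R}$ (or $\mathbb{R}^2$, in the two-extreme-state subcase) with positive cone coming from the standard order, in which case the iso class is determined by a specific subset of the target and the induced invariant is smooth; or $A$ has nontrivial infinitesimals, in which case $\infin(A)$ and the quotient $A/\infin(A)$ are rank-$1$ torsion-free abelian groups from which $(A, A^+, u)$ can be recovered up to isomorphism by a construction essentially identical to $g_1$, so that the classification on this piece reduces to $\cong_1 \;\sim_B\; E_0$. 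Taking the disjoint union of these pieces gives $\cong_2^+ \leq_B E_0$.

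The main obstacle will be to carry out this state-space analysis in a Borel fashion: in particular, verifying that the number of extreme states, the infinitesimal subgroup $\infin(A)$, and the canonical embedding of $A/\infin(A)$ into $\mathbb{R}^k$ are all computable from $(A,A^+,u) \in \mathcal{SDG}_2$ by Borel functions, and that the resulting stratification is $\cong_2^+$-invariant. Much of this parallels the general theory developed in Section \ref{Section-state spaces}, but one could, if desired, give a self-contained rank-$2$ version using the (easy) fact that simple unital dimension groups of rank $2$ have at most two extreme states.
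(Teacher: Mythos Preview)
Your forward reduction via $f_2$ and your overall contradiction strategy are sound; the problem is the claimed proof that $(\cong_2^+)\leq_B E_0$, specifically on the stratum with nontrivial infinitesimals. You assert that $\infin(A)$ and $A/\infin(A)$ together recover $(A,A^+,u)$ up to isomorphism ``by a construction essentially identical to $g_1$.'' But $g_1(G)=G\oplus\mathbb{Q}$ always has $A/\infin(A)\cong\mathbb{Q}$, whereas e.g.\ $A=\mathbb{Z}^2$ with cone $\{(x,y):x>0\}\cup\{0\}$ and unit $(1,0)$ has $A/\infin(A)\cong\mathbb{Z}$; so most of this stratum lies outside the image of any $g_1$-type construction. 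More seriously, even the pair $(\infin(A),\,A/\infin(A))$ does not determine $(A,A^+,u)$: the extension $0\to I\to A\to A/I\to 0$ is an additional invariant, and for instance $\operatorname{Ext}^1(\mathbb{Z}[1/p],\mathbb{Z})\cong\mathbb{Z}_p/\mathbb{Z}$ is uncountable while the automorphism groups of $I$ and $A/I$ are countable. So this stratum does not reduce to $\cong_1$ in the way you propose, and hyperfiniteness of $\cong_2^+$ is left unestablished.

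The paper avoids this issue altogether. Instead of proving $(\cong_2^+)\leq_B E_0$, it normalizes $u=e_0$, so that $\cong_2^+$ becomes the orbit relation of the solvable, hence amenable, group $H=\bigl\{\bigl(\begin{smallmatrix}1&b\\0&d\end{smallmatrix}\bigr)\bigr\}\leq\GL_2(\mathbb{Q})$. It then invokes the amenability-transfer theorem of Jackson--Kechris--Louveau (Theorem~\ref{Frechet Amenable Theorem}): assuming $(\cong_3^+)\leq_B(\cong_2^+)$ one gets $(\cong_2)\leq_B(\cong_2^+)^{\langle e_0\rangle}$, and Hjorth's free probability-measure-preserving $\PSL_2(\mathbb{Z})$-action on a space of rank-$2$ groups then forces $\PSL_2(\mathbb{Z})$ to be amenable, a contradiction. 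This route needs only amenability of the acting group, which is immediate, not hyperfiniteness of the orbit relation. Your route could perhaps be salvaged by observing that, after normalizing both $u$ and the line spanned by $\infin(A)$, the residual acting group on the infinitesimal stratum is the abelian group $\mathbb{Q}^*$, and then invoking the Gao--Jackson theorem that Borel actions of countable abelian groups are hyperfinite --- but that is a substantial extra ingredient, not the easy reduction to $\cong_1$ that you sketch.
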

To prove this, we make use of the following fact:

\begin{thm}\cite[Section 2.4]{jkl}\label{Frechet Amenable Theorem}
Suppose $G$ and $H$ are countable groups acting on the standard Borel spaces $X$ and $Y$ respectively, and let $E_G^X$, $E_H^Y$ be the corresponding orbit equivalence relations.  Suppose $E_G^X \leq_B E_H^Y$ or $E_G^X \subseteq E_H^Y$. Suppose furthermore that there is a $G$-invariant probability measure on $X$, and the action is free on an invariant Borel set of measure $1$.  If $H$ is amenable, then $G$ is amenable.
\end{thm}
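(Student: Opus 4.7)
The upper bound $(\cong_2^+)\leq_B (\cong_3^+)$ is an instance of Theorem~\ref{SDGn to SDGn+1} with $n=2$: the map $f_2:\mathcal{SDG}_2\to\mathcal{SDG}_3$ given there is already a Borel reduction. So all the work is in establishing the strict inequality $(\cong_3^+)\not\leq_B(\cong_2^+)$.

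My plan for the strict inequality is to argue by contradiction and then invoke Theorem~\ref{Frechet Amenable Theorem}. Suppose $(\cong_3^+)\leq_B(\cong_2^+)$. Theorem~\ref{RQn to SDGn+1} with $n=2$ supplies a Borel reduction $g_2:R(\mathbb{Q}^2)\to\mathcal{SDG}_3$ from $(\cong_2)$ to $(\cong_3^+)$. Composing the two gives $(\cong_2)\leq_B(\cong_2^+)$, and my goal is to rule this out.

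To apply Theorem~\ref{Frechet Amenable Theorem}, I need a non-amenable countable group $G$ acting on a standard Borel space $X$ with a $G$-invariant probability measure and a free action on an invariant Borel set of full measure, together with an identification of $\cong_2^+$ (or at least the relevant image of $g_2$) as the orbit equivalence relation of an \emph{amenable} countable group $H$. For the $G$-side, I would use a standard non-amenable subaction sitting inside $(\cong_2)$: for instance, an $SL_2(\mathbb{Z})$-invariant Borel subset of $R(\mathbb{Q}^2)$ carrying an invariant probability measure on which the action is free a.e., produced by the Hjorth--Thomas machinery already used in the rank~$2$ classification problem for torsion-free abelian groups. For the $H$-side, I would exploit the rigidity of rank~$2$ simple unital dimension groups: every such $(A,A^+,u)$ admits an essentially unique state, and normalizing $\tau(u)=1$ produces a canonical embedding $A\hookrightarrow\mathbb{R}$ with $A^+=\{a\in A:\tau(a)>0\}\cup\{0\}$. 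This pins the extra data of $(A^+,u)$ to the embedding into $\mathbb{R}$, so that isomorphism of rank~$2$ simple unital dimension groups should Borel reduce to an orbit equivalence of an amenable (indeed hyperfinite) group action — concretely, to a $GL_2(\mathbb{Z})$-style Möbius action on the reals whose orbit equivalence relation can be re-presented via continued fractions as the tail equivalence relation of a $\mathbb{Z}$-action.

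The hardest step will be this last reduction: verifying that $(\cong_2^+)$ is Borel reducible to an orbit equivalence relation of an amenable countable group. Once that is done, Theorem~\ref{Frechet Amenable Theorem} applied to the composed reduction $E_G^X\leq_B(\cong_2)\leq_B(\cong_2^+)\leq_B E_H^Y$ (with $G$ non-amenable and $H$ amenable) would force $G$ to be amenable, a contradiction that completes the proof. An alternative, should the clean "amenable target" reduction prove delicate, would be to replace amenability with a direct measure-theoretic ergodicity/cocycle obstruction on the $SL_2(\mathbb{Z})$-side — the essential feature being that the one-dimensional state space of rank~$2$ simple dimension groups leaves no room to encode the rank~$2$ TFAG classification.
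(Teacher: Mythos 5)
Your write-up does not engage with the statement it was asked to prove. The statement is the amenability-transfer theorem of Jackson--Kechris--Louveau: if $E_G^X\leq_B E_H^Y$ or $E_G^X\subseteq E_H^Y$ with $H$ a countable amenable group, and $X$ carries a $G$-invariant probability measure with respect to which the $G$-action is free on a conull invariant set, then $G$ is amenable. What you have written is instead a proof sketch of Theorem \ref{2 is less than 3} (that $(\cong_2^+)<_B(\cong_3^+)$), and in that sketch you explicitly \emph{invoke} Theorem \ref{Frechet Amenable Theorem} as a black box. Relative to the assigned statement this is circular: the result to be proved appears only as an unproved ingredient, and nothing in your argument addresses why a Borel reduction into the orbit equivalence relation of an amenable group forces amenability of $G$. (The paper itself offers no proof either --- it cites \cite{jkl} --- but a correct argument would go through the amenability of the equivalence relation $E_H^Y$ when $H$ is amenable, the transfer of $\mu$-amenability of equivalence relations along the Borel homomorphism $f$, and the fact that a measure-preserving, a.e.\ free action of a countable group whose orbit equivalence relation is $\mu$-amenable can only arise from an amenable group; none of these steps is present.)

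As an aside on the theorem you did sketch: your overall strategy for Theorem \ref{2 is less than 3} matches the paper's (assume $(\cong_3^+)\leq_B(\cong_2^+)$, precompose with the reduction of Theorem \ref{RQn to SDGn+1}, exhibit a non-amenable $\PSL_2(\mathbb{Z})$-action with invariant measure on the source via Hjorth's construction, and derive a contradiction from Theorem \ref{Frechet Amenable Theorem}). But the step you flag as hardest --- realizing $\cong_2^+$ inside an amenable orbit equivalence relation via states, continued fractions, and tail equivalence --- is handled far more directly in the paper: after normalizing the order unit to $(1,0)$, the relation $(\cong_2^+)^{\langle e_0\rangle}$ is literally the orbit equivalence relation of the group of matrices $\bigl(\begin{smallmatrix}1&b\\0&d\end{smallmatrix}\bigr)$ in $\GL_2(\mathbb{Q})$, which is solvable and hence amenable. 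No analysis of the state space is needed for that half.
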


\begin{proof}[Proof of Theorem \ref{2 is less than 3}]
Theorem \ref{SDGn to SDGn+1} implies that $(\cong_{2}^+) \leq_{B} (\cong_{3}^+)$.
Instead of analyzing $\cong_{2}^+$, we first note that it is enough to analyze the Borel equivalence relation
obtained by restricting $\cong_{2}^+$ to those dimension groups $(A,A^+,u_A)$ for which
$u_A=(1,0)\in\mathbb{Q}^2$.  Denote this space of rank $2$ dimension groups by $SDG_2^{<e_0>}$ and the resulting equivalence relation by $(\cong_2^+)^{<e_0>}$.  Then we have a Borel reduction from $\cong_2^+$ to $(\cong_2^+)^{<e_0>}$ via $(A,A^+,u_A)\mapsto g(A,A^+,u_A)$, where $g$ is some element of $\GL_2(\mathbb{Q})$ such that $g(u_A)=(1,0)$.  Now we have that $(\cong_2^+)^{<e_0>}$ is the orbit equivalence relation of the natural action of the group
$$H=\left\{ \left(
\begin{array}{ll}
a & b\\
c & d
\end{array}
\right) \in GL_n(\mathbb{Q})\mid a=1,c=0\right\}$$ on $SDG_2^{(e_0)}$.  Then since $H$ is solvable, it is also amenable.

Now suppose that $(\cong_3^+)\leq_B (\cong_2^+)$, and thus \mbox{$(\cong_2) \leq_B (\cong_3^+) \leq_B (\cong_2^+) \leq_B (\cong_2^+)^{<e_0>}$}.  In \cite[Section 5]{hjorth}, Hjorth has constructed a $\PSL_2(\mathbb{Z})$-invariant measure $\mu$ on a space $Y$ that is Borel isomorphic to a subset of $R(\mathbb{Q}^2)$, together with a Borel subset $X\subset Y$ with $\mu(X)=1$ such that $\PSL_2(\mathbb{Z})={\SL_2(\mathbb{Z})}/\{1,-1\}$ acts freely on $X$.  Thus Theorem \ref{Frechet Amenable Theorem} would imply that $\PSL_2(\mathbb{Z})$ is amenable.  However, this is not the case, since $\PSL_2(\mathbb{Z})$ contains an isomorphic copy of $\mathbb{F}_2$, namely, $\left<
\left(
\begin{array}{c c}
1 & 1\\
0 & 1 \end{array}
\right),
\left(
\begin{array}{c c}
1 & 0\\
1 & 1 \end{array}
\right)
\right>/\{-1,1\}$.
\end{proof}

\section{The Geometry of Finite Rank Simple Unital Dimension Groups}\label{Section-state spaces}

In order to prove Theorem \ref{maintarget} for $n\geq 3$, we will need to understand the space of states of a simple unital dimension group of finite rank. For more detailed treatments, see Chapter 4 of \cite{effros} and Chapter 14 of \cite{goodearl2}. Fix $n\geq 3$ and some $(A,A^+,u)\in SDG_n$.  We say that a homomorphism $p:A\rightarrow\mathbb{R}$ is a \emph{state} if $p$ is positive
(i.e., $p(A^+)\geq0$), and $p(u)=1$.  We let $S_u(A,A^+)$ be the set of all states on $(A,A^+,u)$, and we give it the
weakest topology for which each of the functions $\hat{a}:f\mapsto f(a)\text{ }(a\in A)$ is continuous.

It is clear that $S_u(A,A^+)$ is convex. Since $A$ has finite rank,  $S_u(A,A^+)$ is also compact.  Let $E(S_u(A,A^+))$ be the convex hull of the extreme points of $S_u(A,A^+)$.
It is then a consequence of the Krein-Milman theorem that $E(S_u(A,A^+))$ has at most $n-1$ points.  (See also \cite[Proposition 14.21]{goodearl2})



Next, we wish to explore the manner in which $E(S_u(A,A^+))$ determines the positive cone $A^+$. We let $\Aff S_u(A,A^+)$ be the affine functions on $S_u(A,A^+)$.
Then we define a positive homomorphism \mbox{$\theta:A\rightarrow\Aff S_u(A,A^+):a\mapsto\hat{a}$} by letting $\hat{a}(p)=p(a)$.  (Note that $\hat{u}=1$.)

\begin{definition}
Suppose that $u$ is an order unit in an unperforated order group $A$.  We say that $a\in A$ is
\emph{infinitesimal} if $-u\leq na\leq u$ for all $n\in\mathbb{N}$.
\end{definition}
Notice that the infinitesimal elements do not depend on the choice of $u$, and that the set of infinitesimals forms a subgroup of $A$.

\begin{thm}\cite[Corollary 1.5]{effros-handelman-shen}\label{nature of theta}
If $(A,A^+,u)$ is a simple dimension group, then the map
$$\theta:A\rightarrow\Aff S_u(A,A^+)$$
determines the order on $A$ in the sense that $$A^+=\{a\in A:\hat{a}(p)>0\text{ for all }p\in S_u(A,A^+) \}\cup\{0\}.$$  Furthermore, we have $a\in\ker\theta$ (i.e., $\hat{a}=0$)
if and only if $a$ is infinitesimal.
\end{thm}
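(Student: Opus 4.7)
The plan is to prove the positivity characterization first and then deduce the kernel description from it; I expect the reverse inclusion in the positivity statement to be the main obstacle.

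For the easy direction $A^+\setminus\{0\}\subseteq\{a:\hat a(p)>0\text{ for all }p\}$, I would invoke simplicity directly. As noted in the discussion preceding Theorem \ref{B simple iff K_0(B) simple}, in a simple dimension group every nonzero positive element is an order unit. So given $a\in A^+\setminus\{0\}$, pick $m\in\mathbb{N}$ with $u\leq ma$; applying any state $p$ to this inequality gives $1=p(u)\leq m\,p(a)$, whence $\hat a(p)\geq 1/m>0$.

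For the reverse inclusion I would argue contrapositively: fix a nonzero $a\in A\setminus A^+$ and produce a state $p$ with $p(a)\leq 0$, which already contradicts the hypothesis $\hat a(p)>0$. Since $A$ has rank $n$, the ambient real vector space $V=A\otimes_{\mathbb{Z}}\mathbb{R}$ is $n$-dimensional, and the closed convex cone $C\subseteq V$ generated by $A^+$ is a closed convex set. If $a\notin C$, Hahn--Banach separation furnishes a nonzero $\mathbb{R}$-linear functional $\varphi\colon V\to\mathbb{R}$ with $\varphi\geq 0$ on $C$ and $\varphi(a)<0$. Because $u$ is an order unit, every $b\in A^+$ satisfies $b\leq ku$ for some $k$, forcing $\varphi(u)>0$ (else $\varphi$ would vanish on $A^+$ and hence on $A=A^+-A^+$). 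After dividing by $\varphi(u)$, the restriction $\varphi|_A$ is the desired state. The delicate situation is $a\in C\cap A$ with $a\notin A^+$; here the hypothesis $\hat a(p)>0$ together with compactness of $S_u(A,A^+)$ gives a uniform lower bound $\hat a\geq\epsilon\hat u$ on the state space for some $\epsilon>0$, and I would combine this with Riesz interpolation and the fact that in the simple case $A^+\setminus\{0\}$ is absorbing (every nonzero positive element is an order unit) to approximate $a$ from below by elements of $A^+$ and conclude, via unperforation, that $a\in A^+$. This last step is where I expect the technical work to lie.

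The kernel description then follows quickly. If $a$ is infinitesimal, the definition $-u\leq na\leq u$ gives $|n\,p(a)|\leq 1$ for every state $p$ and every $n\in\mathbb{N}$, forcing $\hat a(p)=0$. Conversely, suppose $\hat a=0$ on all of $S_u(A,A^+)$. Then for each $n\in\mathbb{N}$ the element $u\pm na$ has $\widehat{u\pm na}(p)=1>0$ on every state; by the positivity characterization just established, $u\pm na\in A^+$, i.e., $-u\leq na\leq u$. That is the definition of infinitesimality, so $a\in\ker\theta$.
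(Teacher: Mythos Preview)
The paper does not supply a proof of this result; it is quoted from \cite[Corollary~1.5]{effros-handelman-shen} and used as a black box, so there is no in-paper argument to compare your proposal against.

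On its own merits: your forward inclusion and your derivation of the kernel description from the positivity characterization are correct and complete. For the reverse inclusion, two remarks. First, your Case~2 argument (from $\hat a(p)>0$ for all $p$, deduce $a\in A^+$) nowhere uses the hypothesis $a\in C$, so once completed it would prove the whole reverse inclusion outright and render the Hahn--Banach Case~1 redundant; the geometric separation only handles elements lying outside the closed real cone, which is the easy part anyway. Second, and more substantively, your Case~2 sketch names the right ingredients but does not indicate how they assemble. The missing link is the formula (see \cite[Chapter~4]{goodearl2} or the original \cite{effros-handelman-shen})
\[
\inf_{p\in S_u(A,A^+)} p(a)\;=\;\sup\Bigl\{\tfrac{m}{n}: m\in\mathbb{Z},\ n\in\mathbb{N}^+,\ na-mu\in A^+\Bigr\},
\]
whose proof is precisely where Riesz interpolation enters, via an order-theoretic Hahn--Banach extension of partial positive homomorphisms. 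Granting this, $\inf_p p(a)>0$ yields $na\geq mu\geq u$ for some $m\geq 1$ and $n\geq 1$, hence $na\in A^+$, and unperforation gives $a\in A^+$. You rightly flag this as the hard step, but the proposal as written does not show how interpolation is actually deployed.

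A minor point: you invoke finite rank to make $A\otimes_{\mathbb{Z}}\mathbb{R}$ finite-dimensional, but the theorem as stated (and as proved in \cite{effros-handelman-shen}) holds for arbitrary simple dimension groups with order unit; compactness of $S_u(A,A^+)$ follows from $u$ being an order unit, not from finite rank. For the purposes of this paper the restriction is harmless.
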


\begin{corollary}\label{extreme points determines A^+}
Fix $n\geq 1$ and let $(A,A^+,u), (B,B^+,v)\in SDG_n$.  Suppose that $A=B$ and $u=v$.  Then $A^+=B^+$ if and only if $S_u(A,A^+)=S_v(B,B^+)$.
\end{corollary}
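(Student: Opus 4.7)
The forward direction is essentially tautological: the space $S_u(A,A^+)$ of states is defined purely in terms of the triple $(A, A^+, u)$, namely as the set of group homomorphisms $p : A \to \mathbb{R}$ satisfying $p(A^+) \subseteq \mathbb{R}_{\geq 0}$ and $p(u)=1$. Under the hypotheses $A = B$, $u = v$, and $A^+ = B^+$, the defining conditions agree verbatim, so the two state spaces coincide as sets of functions on the common group.

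For the reverse direction, my plan is to invoke Theorem \ref{nature of theta} twice. Because both $(A,A^+,u)$ and $(B,B^+,v)$ lie in $\mathcal{SDG}_n$, they are simple unital dimension groups, so the theorem gives
\[
A^+ \;=\; \{a \in A : \hat{a}(p) > 0 \text{ for all } p \in S_u(A,A^+)\} \cup \{0\}
\]
and analogously
\[
B^+ \;=\; \{a \in B : \hat{a}(p) > 0 \text{ for all } p \in S_v(B,B^+)\} \cup \{0\}.
\]
The crucial observation is that the evaluation $\hat{a}(p) = p(a)$ depends only on the abstract group element $a$ and on the function $p$; it makes no further reference to the positive cone. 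Since by hypothesis $A = B$ (so the ambient sets of elements $a$ are the same) and $S_u(A,A^+) = S_v(B,B^+)$ (so the quantified sets of functions $p$ are the same), the two displayed formulas describe the same subset of $A = B$. Hence $A^+ = B^+$.

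There is no serious obstacle here: the corollary is really just a uniqueness statement repackaging Theorem \ref{nature of theta}, and the only thing to notice is that nothing in the formula for $A^+$ given by that theorem references the positive cone except through the state space. One minor point worth checking in the write-up is that the states are indeed detecting strict positivity rather than nonnegativity in the displayed formula — but that is precisely what is guaranteed by Theorem \ref{nature of theta} for simple dimension groups, since non-zero elements with $\hat{a} \equiv 0$ would be infinitesimal, and a simple dimension group of rank $\geq 1$ that is not equal to the infinitesimal subgroup behaves as stated.
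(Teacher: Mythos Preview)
Your proof is correct and matches the paper's intended approach: the paper states this as an immediate corollary of Theorem~\ref{nature of theta} without giving a separate argument, and your reasoning---that the formula $A^+=\{a\in A:\hat a(p)>0\text{ for all }p\in S_u(A,A^+)\}\cup\{0\}$ depends on the positive cone only through the state space---is exactly the one-line deduction being invoked. Your final paragraph about infinitesimals is unnecessary here (the strict-positivity formulation is already the content of Theorem~\ref{nature of theta}, not something requiring further justification), but it does no harm.
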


\section{Cocycles}\label{section-cocycles}
Let $G$ be a locally compact second countable (lcsc) group, and let $X$ be a standard Borel $G$-space with invariant probability measure $\mu$.  Let $H$ be an lcsc group.  A \emph{cocycle} of the $G$-space $X$ into $H$ is a Borel map $\alpha: G\times X\rightarrow H$ such that for all $g,h\in G$,
$$\alpha(hg,x)=\alpha(h,g\cdot x)\alpha(g,x)\;\;\mu\text{-a.e.}(x).$$
If \mbox{$\beta: G\times X\rightarrow H$} is also a cocycle, we say that \emph{$\alpha$ is equivalent to $\beta$} if there is a Borel map $A:X\rightarrow H$ such that for all $g\in G$,
$$\alpha(g,x)=A(g\cdot x)\beta(g,x)A(x)^{-1}\;\;\mu\text{-a.e.}(x).$$

In addition to being the only type of cocycle which we will encounter in this paper, the following canonical example motivates the above definitions.  Suppose $E=E_G^X$ and $F=E_H^Z$, where $H$ acts freely on $Z$.  Let $f:X\rightarrow Z$ be a Borel function such that $xEy$ implies $f(x)Ff(y)$, i.e., $f$ is a Borel homomorphism from $E$ to $F$.  Then the function $\alpha:G\times X\rightarrow H$ defined by $f(g\cdot x)=\alpha(g,x)\cdot f(x)$ is a cocycle. (There exists a \emph{unique} such element $\alpha(g,x)\in H$ since the action of $H$ on $Z$ is free.)

There are various cocycle reduction results which say that, under certain hypotheses, cocycles $\alpha$ are equivalent to cocycles $\beta$, whose range $\beta(G\times X)$ is contained in a ``small'' subgroup of $H$.  In Section \ref{section-proof of main theorem} we shall make essential use of the following such theorem.

\begin{thm}\cite[2.3]{t2}\label{cocycletheorem}
Let $n\geq 3$ and let $X$ be a standard Borel $SL_n(\mathbb{Z})$-space with an invariant ergodic probability
measure.  Suppose that $G$ is an algebraic $\mathbb{Q}$-group such that $\dim G< n^2-1$ and that $H\leq
G(\mathbb{Q})$. Then for every Borel cocycle $\alpha\colon SL_n(\mathbb{Z})\times X\rightarrow H$, there exists
an equivalent cocycle $\beta$ such that $\beta(SL_n(\mathbb{Z})\times X)$ is contained in a finite subgroup of
$H$.
\end{thm}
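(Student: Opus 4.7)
The plan is to apply Zimmer's cocycle superrigidity theorem, which is available because for $n \geq 3$ the lattice $SL_n(\mathbb{Z})$ sits inside the higher-rank simple Lie group $SL_n(\mathbb{R})$ of real rank $n-1 \geq 2$. First I would view $\alpha$ as taking values in $G(\mathbb{R})$ via the embedding $H \leq G(\mathbb{Q}) \hookrightarrow G(\mathbb{R})$, and induce it to a cocycle for the $SL_n(\mathbb{R})$-action on $Y = SL_n(\mathbb{R}) \times_{SL_n(\mathbb{Z})} X$, equipped with the measure inherited from $\mu$. Zimmer's theorem then produces an equivalent cocycle of the form $\tilde\beta(g,y) = c(g\cdot y)\,\pi(g)\,c(y)^{-1}$, where $\pi\colon SL_n(\mathbb{R}) \to G(\mathbb{R})$ is a continuous homomorphism and $c$ takes values in a compact subgroup of $G(\mathbb{R})$.

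Next I would invoke the dimension hypothesis. Since $SL_n(\mathbb{R})$ is simple modulo its finite centre, any non-trivial continuous homomorphism $\pi$ would embed $\mathfrak{sl}_n(\mathbb{R})$ into the Lie algebra of $G(\mathbb{R})$; but $\dim SL_n(\mathbb{R}) = n^2 - 1 > \dim G$, so no such embedding can exist and $\pi$ must be trivial. Consequently $\tilde\beta$ takes values in a compact subgroup $K \leq G(\mathbb{R})$. Restricting back to $SL_n(\mathbb{Z})$ and, via a measurable straightening, arranging the transfer function to take values in $H$, I obtain an equivalent cocycle $\beta\colon SL_n(\mathbb{Z}) \times X \to H$ whose essential image lies in the bounded set $H \cap K$.

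The final step is to upgrade boundedness at the real place to genuine finiteness inside $H$, and this is where the main obstacle lies. In the archimedean setting alone, a countable subgroup of $G(\mathbb{Q})$ can intersect a compact subgroup of $G(\mathbb{R})$ in an infinite set, so a single-place analysis will not suffice. The strategy is to run the superrigidity argument simultaneously at every non-archimedean place by viewing $G(\mathbb{Q})$ diagonally inside $\prod_v G(\mathbb{Q}_v)$: because the dimension constraint kills the homomorphic part at every completion, one can arrange that the image of $\beta$ is bounded at each place. A subgroup of $G(\mathbb{Q})$ bounded at all places is finite by Borel--Harish-Chandra, so $\beta(SL_n(\mathbb{Z}) \times X)$ lies in a finite subset of $H$, which we enlarge to the finite subgroup it generates. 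Coordinating these place-by-place reductions while preserving a single cohomology class is the technical heart of the proof.
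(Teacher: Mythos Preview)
This theorem is not proved in the present paper: it is quoted from Thomas \cite[2.3]{t2} and invoked as a black box in Section~\ref{section-proof of main theorem}. There is therefore no proof here against which to compare your proposal.

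For what it is worth, your outline is broadly in the spirit of the argument in \cite{t2}, which does rest on Zimmer's cocycle superrigidity for the higher-rank lattice $\SL_n(\mathbb{Z})$ together with the dimension hypothesis $\dim G < n^2 - 1$ to force the homomorphic part to be trivial. The issue you correctly flag as the technical heart---running the reduction at every place so that the image of the adjusted cocycle is bounded in each $G(\mathbb{Q}_v)$, hence finite in $G(\mathbb{Q})$---is exactly where the substantial work lies, and your sketch is candid about not carrying it out. One step that would need more care than you indicate is the clause ``arranging the transfer function to take values in $H$'': the straightening map produced by superrigidity a priori lands in $G(\mathbb{R})$ (respectively $G(\mathbb{Q}_v)$), not in $H\leq G(\mathbb{Q})$, and descending it while preserving the cohomology class is not automatic. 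If you want to turn this into a full proof you should consult \cite{t2} directly, where these points are handled.
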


\section{Proof of Theorem \ref{maintarget}}\label{section-proof of main theorem}

Two torsion-free abelian groups $A,B\in R(\mathbb{Q}^n)$ are isomorphic if and only if there is some $\varphi\in \GL_n(\mathbb{Q})$ such that $A=\varphi(B)$.  Let $\cong_n$ denote this isomorphism relation.

\begin{thm}\cite{hjorth}\cite{t2}\label{hjorth/thomas}
For all $n\geq 1$, $(\cong_n) <_B (\cong_{n+1})$.
\end{thm}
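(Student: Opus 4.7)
The forward reduction $(\cong_n) \leq_B (\cong_{n+1})$ is the easy half. The plan is to fix a Borel choice $A \mapsto A \oplus \mathbb{Z} e_{n+1}$ sending each full-rank subgroup of $\mathbb{Q}^n$ to the corresponding full-rank subgroup of $\mathbb{Q}^{n+1}$. Isomorphisms in $\GL_n(\mathbb{Q})$ extend in the obvious block-diagonal way, and conversely any isomorphism $A \oplus \mathbb{Z} \cong B \oplus \mathbb{Z}$ descends to $A \cong B$ after quotienting by the divisible-hull torsion-free complement distinguished by the summand $\mathbb{Z}$ (or, in invariant language, by taking the quotient by the pure subgroup generated by the last coordinate, which is characteristic once one marks the summand). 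This step is routine once the standard Borel structure on $R(\mathbb{Q}^n)$ is fixed.

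The hard half, strict inequality, is where the real content lies and where I would follow the Hjorth--Thomas template. For $n=1$ the argument is separate: Thomas's earlier result $(\cong_1) \sim_B E_0$ together with a direct verification that $\cong_2$ is not essentially countable (hence not $E_0$) gives strictness at the base case. For $n \geq 2$ the plan is to argue by contradiction using cocycle superrigidity in the form already stated as Theorem~\ref{cocycletheorem}. Suppose $f\colon R(\mathbb{Q}^{n+1}) \to R(\mathbb{Q}^n)$ is a Borel reduction from $\cong_{n+1}$ to $\cong_n$. Restrict $f$ to a suitably chosen $SL_{n+1}(\mathbb{Z})$-invariant Borel subset $X \subseteq R(\mathbb{Q}^{n+1})$ carrying an $SL_{n+1}(\mathbb{Z})$-invariant ergodic probability measure on which the action is free, the existence of such $(X,\mu)$ coming from the standard Hjorth construction (the same one invoked in the proof of Theorem~\ref{2 is less than 3}).

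Since $\GL_n(\mathbb{Q})$ acts freely on its principal homogeneous space, after an auxiliary section one obtains from $f$ a Borel cocycle
\begin{equation*}
\alpha\colon SL_{n+1}(\mathbb{Z}) \times X \longrightarrow \GL_n(\mathbb{Q})
\end{equation*}
satisfying $f(g\cdot x) = \alpha(g,x)\cdot f(x)$ almost everywhere. Since $\GL_n$ is an algebraic $\mathbb{Q}$-group of dimension $n^2 < (n+1)^2 - 1$, Theorem~\ref{cocycletheorem} applies and yields an equivalent cocycle $\beta$ whose image lies in a finite subgroup $F \leq \GL_n(\mathbb{Q})$. Unwinding the definition of equivalence, this means that up to a Borel adjustment $A\colon X \to \GL_n(\mathbb{Q})$ the map $\tilde f(x) = A(x)\cdot f(x)$ sends the entire $SL_{n+1}(\mathbb{Z})$-orbit of $\mu$-almost every $x$ into a single $F$-orbit inside $R(\mathbb{Q}^n)$. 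Thus $\mu$-a.e.\ $SL_{n+1}(\mathbb{Z})$-orbit is pushed by $\tilde f$ into a finite set, so the pushforward relation on $X/SL_{n+1}(\mathbb{Z})$ collapses to something with at most countably many (in fact, $|F|$-bounded) classes per fiber. On the other hand, $\cong_{n+1}$ restricted to $X$ is known (again by the Hjorth/Thomas analysis of $R(\mathbb{Q}^{n+1})$) to have uncountably many orbits on every measure-one set, and indeed to be non-smooth and non-trivially ergodic-theoretically complex, contradicting this collapse.

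The main obstacle, and the place where a careful write-up really has to be done, is the passage from the Borel reduction $f$ to the honest cocycle $\alpha$ into $\GL_n(\mathbb{Q})$: one must arrange the target side so that $\GL_n(\mathbb{Q})$ acts freely on a Borel selector through $f$, which in practice means restricting to a conull set on which $f$ lands in groups with trivial automorphisms commuting with the chosen decoration, or equivalently factoring through a free action on a Borel $\GL_n(\mathbb{Q})$-space that covers $R(\mathbb{Q}^n)$. Getting this cocycle genuinely well-defined (rather than well-defined up to a finite ambiguity) is the subtlety that forces one to work with $SL_{n+1}(\mathbb{Z})$ on a carefully engineered invariant subset, and it is the place where Hjorth's and Thomas's original arguments do the most work. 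Once that cocycle is in hand, the application of Theorem~\ref{cocycletheorem} and the contradiction via the dimension bound $n^2 < (n+1)^2 - 1$ are the comparatively mechanical payoff.
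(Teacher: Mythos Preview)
This theorem is stated in the paper as a citation to Hjorth and Thomas and carries no proof of its own; the surrounding text only recalls certain features of Thomas's argument (the ergodic $\SL_n(\mathbb{Z})$-space of $p$-local groups from Theorem~\ref{existence of probability space}, the cocycle reduction of Theorem~\ref{cocycletheorem}) as motivation for the paper's own proof of Theorem~\ref{maintarget}. So there is no in-paper proof to compare your proposal against.

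On its own merits, your sketch is in the right spirit for $n\geq 2$ but contains several concrete errors. First, your base case is wrong as written: $\cong_2$ \emph{is} essentially countable, being the orbit relation of the countable group $\GL_2(\mathbb{Q})$. What Hjorth actually shows is that $\cong_2$ is not \emph{hyperfinite} (equivalently, not $\leq_B E_0$), via a free action of a non-amenable group with an invariant probability measure, exactly the mechanism the paper re-uses in the proof of Theorem~\ref{2 is less than 3}. Second, your justification of the easy direction is muddled: the $\mathbb{Z}$-summand is not characteristic in $A\oplus\mathbb{Z}$, and ``divisible-hull torsion-free complement'' does not name a well-defined object here; the honest reason $A\oplus\mathbb{Z}\cong B\oplus\mathbb{Z}\Rightarrow A\cong B$ is the cancellation property of $\mathbb{Z}$ in the category of abelian groups. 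Third, and most seriously, $\GL_n(\mathbb{Q})$ does \emph{not} act freely on $R(\mathbb{Q}^n)$ (every $A$ has $-1$ as an automorphism), so you cannot simply read off a cocycle from a reduction $f$. Thomas's actual argument first restricts to the $p$-local groups of Theorem~\ref{existence of probability space}, pins down the finite stabilizer $S$ on a positive-measure set, and then defines the cocycle into the quotient $N_{\GL_n(\mathbb{Q})}(S)/S$, precisely as the paper does in Section~\ref{section-proof of main theorem} for dimension groups. Your final paragraph correctly identifies this as the crux but does not resolve it; that resolution is the substance of the cited proofs.
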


A key ingredient in Thomas' proof that $(\cong_{n-1})<_B (\cong_{n})$, for $n\geq 3$ was that given a Borel homomorphism $f:R(\mathbb{Q}^{n})\rightarrow R(\mathbb{Q}^{n-1})$ from $\cong_{n}$ to $\cong_{n-1}$, he was able to reduce the analysis to the situation where the domain of $f$ was a standard Borel $\SL_n(\mathbb{Z})$-space $X$ with an invariant ergodic probability measure $\mu$.   The construction of the following measure space can be found in Sections 3 and 4 of \cite{thomas-p-local}.

\begin{definition}
Let $\mathbb{P}$ denote the set of primes.  If $p\in\mathbb{P}$, then a group \mbox{$A\in R(\mathbb{Q}^n)$} is said to be \emph{$p$-local} iff $A=qA$ for every prime $q\neq p$; i.e., $A$ is a $\mathbb{Z}_{(p)}$-module, where
$$\mathbb{Z}_{(p)}=\left\{\frac{a}{b}\in\mathbb{Q}\mid a,b\in\mathbb{Z}\text{ and } b\text{ is relatively prime to }p\right\}.$$
Let $R^{(p)}(\mathbb{Q}^n)$ denote the $p$-local subgroups of $\mathbb{Q}^n$ of rank $n$.
\end{definition}

\begin{thm}\label{existence of probability space}
Let $n\geq 3$, and let $p$ be any prime.  Then there exists a standard Borel $\SL_n(\mathbb{Z})$-space $X_n$ with an invariant ergodic probability measure $\mu_n$.  Also there is a countable-to-one map $\sigma_n:X_n\to R(\mathbb{Q}^n)$ which is a Borel homomorphism from $E_{\SL_n(\mathbb{Z})}^{X_n}$ to $\cong_n$.  Furthermore, $\sigma_n$ may be chosen so that all the groups in the image are $p$-local.
\end{thm}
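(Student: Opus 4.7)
The plan is to construct $X_n$ as a compact $p$-adic homogeneous space for $\SL_n(\mathbb{Q}_p)$ on which $\SL_n(\mathbb{Z})$ acts via the diagonal embedding $\SL_n(\mathbb{Z})\hookrightarrow \SL_n(\mathbb{Q}_p)$, and to define $\sigma_n$ by assigning to each point a $p$-local subgroup of $\mathbb{Q}^n$ cut out by a $p$-adic integrality condition. This is the framework Hjorth used in rank $2$ via the projective line over $\mathbb{Q}_p$, and that Thomas generalized to higher rank in \cite{thomas-p-local}.

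First, I would take $X_n$ to be a convenient compact $\SL_n(\mathbb{Z}_p)$-homogeneous space (for instance, the projective space $\PP^{n-1}(\mathbb{Q}_p)$, a suitable flag variety, or a coset space built from $\SL_n(\mathbb{Z}_p)$), equipped with its unique $\SL_n(\mathbb{Z}_p)$-invariant probability measure $\mu_n$. Restriction along the embedding $\SL_n(\mathbb{Z})\hookrightarrow \SL_n(\mathbb{Z}_p)$ shows that $\mu_n$ is $\SL_n(\mathbb{Z})$-invariant. Next, define $\sigma_n(x)$ as the subgroup of $\mathbb{Q}^n$ cut out by a Borel-varying $p$-adic integrality condition associated with $x$; a typical recipe has the form $\sigma_n(x)=\{a\in\mathbb{Q}^n : \phi_x(a)\in\mathbb{Z}_p\}$ for a suitable family of $\mathbb{Q}_p$-linear functionals $\phi_x$ varying Borel-measurably in $x$. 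By construction, each $\sigma_n(x)$ is a full-rank $p$-local subgroup of $\mathbb{Q}^n$, and $\sigma_n(\gamma\cdot x)=\gamma\cdot \sigma_n(x)$ for $\gamma\in\SL_n(\mathbb{Z})$, so $\sigma_n$ is a Borel homomorphism from $E^{X_n}_{\SL_n(\mathbb{Z})}$ to $\cong_n$. The fact that $\sigma_n$ is countable-to-one follows from the observation that two points $x,y$ with $\sigma_n(x)=\sigma_n(y)$ must be related by an element of the $\GL_n(\mathbb{Q})$-stabilizer of this subgroup, and this stabilizer is commensurable with $\SL_n(\mathbb{Z})$, hence intersects each $X_n$-orbit in at most countably many points.

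The main technical obstacle will be ergodicity of the $\SL_n(\mathbb{Z})$-action on $(X_n,\mu_n)$: the action of the full compact group $\SL_n(\mathbb{Z}_p)$ is manifestly transitive, but one must bridge from this to ergodicity of the countable dense subgroup $\SL_n(\mathbb{Z})$. The standard argument uses strong approximation, which guarantees that $\SL_n(\mathbb{Z})$ is dense in $\SL_n(\mathbb{Z}_p)$ (this is where the assumption $n\geq 3$ is comfortable, although $n\geq 2$ suffices for strong approximation itself), together with the strong continuity of the translation representation of $\SL_n(\mathbb{Z}_p)$ on $L^2(X_n,\mu_n)$: any $\SL_n(\mathbb{Z})$-invariant $L^2$-function is automatically $\SL_n(\mathbb{Z}_p)$-invariant and hence constant by transitivity of $\SL_n(\mathbb{Z}_p)$. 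A secondary check is that the image genuinely lies in $R^{(p)}(\mathbb{Q}^n)$, which is forced by the fact that the defining condition for $\sigma_n(x)$ is phrased only in terms of $\mathbb{Z}_p$, so inverting any prime $q\neq p$ leaves it unchanged.
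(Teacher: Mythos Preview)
The paper does not prove this theorem at all: it is stated as a black box, with the sentence immediately preceding it reading ``The construction of the following measure space can be found in Sections 3 and 4 of \cite{thomas-p-local}.'' So there is no ``paper's own proof'' to compare against; the result is simply imported from Thomas's work on $p$-local torsion-free abelian groups.

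That said, your outline is essentially the construction Thomas carries out in the cited reference (and, in the multi-prime version, in \cite{t2}): one realizes $X_n$ as a compact homogeneous $\SL_n(\mathbb{Z}_p)$-space with its Haar-type invariant probability measure, restricts the action to the dense subgroup $\SL_n(\mathbb{Z})$, obtains ergodicity from strong approximation together with continuity of the $L^2$ representation, and sends each point to the $p$-local subgroup of $\mathbb{Q}^n$ defined by the corresponding $p$-adic integrality condition. So your proposal is on the right track and matches the intended source.

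One point to tighten: your justification of countable-to-one is not quite right as stated. You say that if $\sigma_n(x)=\sigma_n(y)$ then $x$ and $y$ are related by an element of the $\GL_n(\mathbb{Q})$-stabilizer of the common image group, and that this stabilizer is commensurable with $\SL_n(\mathbb{Z})$. The second claim is false in general (the automorphism group of an arbitrary $p$-local group need not be commensurable with $\SL_n(\mathbb{Z})$), and in any case the first step conflates equality of images with lying in the same orbit. In Thomas's actual construction the fibers of $\sigma_n$ are controlled much more directly: a $p$-local subgroup $A\in R^{(p)}(\mathbb{Q}^n)$ determines its $p$-adic closure $A\otimes\mathbb{Z}_p$ inside $\mathbb{Q}_p^n$, and the parameter $x$ is recoverable from this closure up to at most countably many choices. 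That is the correct mechanism for countable-to-one, and it is worth stating explicitly rather than routing through a stabilizer argument.
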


In order to prove Theorem \ref{maintarget} for the case $n\geq 3$, we will first reduce the analysis to the case of a Borel homomorphism $f:SDG_{n+1}\rightarrow SDG_{n}$ so that for any two dimension groups $(A,A^+,u_A), (B,B^+,u_B)$ in the image of $f$, there exists $\varphi\in \GL_n(\mathbb{Q})$ so that $\varphi(A)=B$ and $\varphi(\infin(A,A^+,u_A))=\infin(B,B^+,u_B)$.

Suppose \mbox{$h:R(\mathbb{Q}^{n})\to R(\mathbb{Q}^{n-1})$} is a Borel homomorphism.  Thomas showed that if $n\geq 3$, then with respect to a suitable invariant ergodic probability measure, $h$ maps a measure one subset of an $SL_n(\mathbb{Z})$-invariant Borel subset of $R(\mathbb{Q}^{n+1})$ to a single isomorphism class of $R(\mathbb{Q}^{n})$.  A key fact in this proof is $\dim(\GL_{n-1}(\mathbb{Q})) < n^2-1$.

In our case, we let $g_n:R(\mathbb{Q}^n)\rightarrow SDG_{n+1}$ be the Borel reduction from $\cong_n$ to $\cong_{n+1}^+$ defined in Theorem \ref{RQn to SDGn+1}, and let $\pi'_n :SDG_{n}\rightarrow R(\mathbb{Q}^n)\times S(\mathbb{Q}^n)$ be the forgetful map $\pi'_n (A,A^+,u)=(A,\infin(A,A^+,u_A))$.  Consider a Borel homomorphism \mbox{$f:SDG_{n+1}\rightarrow SDG_{n}$} (recall $n\geq 3$).  Then composing these maps, we obtain a Borel homomorphism $h=\pi'_{n}\circ f\circ g_n$ from $\cong_n$ to $\cong_{n^*}$, where $S(\mathbb{Q}^n)$ is the space of all subgroups of $\mathbb{Q}^n$ and $\cong_{n^*}$ is the orbit equivalence relation of the diagonal action of $\GL_{n}(\mathbb{Q})$ on $R(\mathbb{Q}^n)\times S(\mathbb{Q}^n)$.  It turns out that the only reason that this is not enough to apply Thomas' analysis is that $\dim(\GL_{n}(\mathbb{Q})) \not< n^2-1$.

To fix this, we observe that by first adjusting by an appropriate element of $\GL_n(\mathbb{Q})$, we can assume that the order unit of every dimension group in the image of $f$ is $u=e_0$,  thus shrinking the group which acts on $SDG_{n}$. In particular, let $\Mat_{n}^{\langle e_0\rangle}(\mathbb{Q})\subset \Mat_n(\mathbb{Q})$  be the subset of all $n\times n$ matrices which fix the one-dimensional subspace $\langle e_0\rangle$.  Let $\GL_{n}^{\langle e_0\rangle}(\mathbb{Q})=\GL_n(\mathbb{Q})\cap \Mat_{n}^{\langle e_0\rangle}(\mathbb{Q})$. Let $\cong_{n^*}^{\langle e_0\rangle}$ be the orbit equivalence relation of the diagonal action of $\GL_{n}^{\langle e_0\rangle}(\mathbb{Q})$ on
$R(\mathbb{Q}^n)\times S(\mathbb{Q}^n)$.  Then $\dim(\GL_{n}^{\langle e_0\rangle}(\mathbb{Q}))< n^2-1$ (for $n>3$), as desired.  This proof of this theorem is then a straightforward analogue of the proof of \cite[Theorem 3.5]{t1}

\begin{thm}\label{onegroupisomorphismclass}
Let $n\geq 3$ and let $X$ be a standard Borel $\SL_{n}(\mathbb{Z})$-space with an invariant ergodic
probability measure $\mu$. Suppose that $f\colon X\rightarrow R(\mathbb{Q}^{n})\times S(\mathbb{Q}^{n})$ is a
Borel function such that $xEy\Rightarrow f(x) \cong_{n^*}^{\langle e_0\rangle} f(y)$.  Then there exists an
$\SL_n(\mathbb{Z})$-invariant Borel subset $M\subset X$ with $\mu(M)=1$ such that $f$ maps  $M$ into a single
$\cong_{n^*}^{\langle e_0\rangle}$ -class.
\end{thm}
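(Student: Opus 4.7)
The plan is to adapt Thomas's argument from \cite[Theorem 3.5]{t1}: encode $f$ as a Borel cocycle on $\SL_n(\ZZ)\times X$, reduce it via Theorem \ref{cocycletheorem}, and then apply ergodicity to concentrate $f$ on a single orbit.

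First I would construct a Borel cocycle $\alpha\colon \SL_n(\ZZ)\times X\to \GL_n^{\langle e_0\rangle}(\QQ)$ that captures $f$. The hypothesis supplies, for each $(g,x)$, some $\varphi\in \GL_n^{\langle e_0\rangle}(\QQ)$ with $\varphi\cdot f(x)=f(g\cdot x)$. Because the action of $\GL_n^{\langle e_0\rangle}(\QQ)$ on $R(\QQ^n)\times S(\QQ^n)$ need not be free, such a $\varphi$ is not canonical; however, after discarding a null set, a standard Borel uniformization (exactly as in \cite{t1}) produces a Borel selection $\alpha(g,x)$ satisfying the cocycle identity $\mu$-a.e. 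I expect this to be the main technical obstacle, as one has to juggle non-freeness of the action with the cocycle equation holding only modulo null sets, and potentially pass to a quotient by the generic stabilizer.

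Next I would verify the dimension hypothesis of Theorem \ref{cocycletheorem} for $G=\GL_n^{\langle e_0\rangle}$, viewed as a $\QQ$-algebraic subgroup of $\GL_n$. Writing a matrix in $\GL_n^{\langle e_0\rangle}$ in block form shows that
\[\dim \GL_n^{\langle e_0\rangle} \;=\; 1 + (n-1) + (n-1)^2 \;=\; n^2-n+1,\]
which is strictly less than $n^2-1$ whenever $n\geq 3$. Theorem \ref{cocycletheorem} then produces a cocycle $\beta$ equivalent to $\alpha$, whose image lies in a finite subgroup $F\leq \GL_n^{\langle e_0\rangle}(\QQ)$, together with a Borel map $A\colon X\to \GL_n^{\langle e_0\rangle}(\QQ)$ witnessing $\alpha(g,x)=A(g\cdot x)\,\beta(g,x)\,A(x)^{-1}$ almost everywhere.

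Finally, I would set $\tilde f(x)=A(x)^{-1}\cdot f(x)$ and observe, using the cocycle identity for $\alpha$ together with the definition $\alpha(g,x)\cdot f(x)=f(g\cdot x)$, that $\tilde f(g\cdot x)\in F\cdot \tilde f(x)$ for every $g\in \SL_n(\ZZ)$ and a.e.\ $x$. Thus $x\mapsto F\cdot \tilde f(x)$ is an $\SL_n(\ZZ)$-invariant Borel map into the countable set $F\backslash(R(\QQ^n)\times S(\QQ^n))$, so ergodicity of $\mu$ forces it to be constant on some $\SL_n(\ZZ)$-invariant Borel set $M\subseteq X$ with $\mu(M)=1$. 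Since both $A(x)$ and $F$ take values in $\GL_n^{\langle e_0\rangle}(\QQ)$, this in turn forces $\{f(x):x\in M\}$ to lie in a single $\GL_n^{\langle e_0\rangle}(\QQ)$-orbit, i.e.\ in a single $\cong_{n^*}^{\langle e_0\rangle}$-class, as required.
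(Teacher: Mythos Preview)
Your proposal is correct and follows exactly the approach the paper indicates: the paper does not give an independent proof but simply states that the argument is ``a straightforward analogue of the proof of \cite[Theorem 3.5]{t1},'' which is precisely the cocycle-reduction strategy you outline, including the key dimension count $\dim \GL_n^{\langle e_0\rangle}=n^2-n+1<n^2-1$ for $n\ge 3$ (the same count appears later in the paper in Case~I). Your remark about handling non-freeness by passing to a quotient by the generic stabilizer is the right way to make the cocycle well-defined, and mirrors how the paper itself argues in Case~I of Section~\ref{section-proof of main theorem}.
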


Now let $n\geq 3$ and assume toward a contradiction that $f:SDG_{n+1}\rightarrow SDG_{n}$ is a Borel reduction from $\cong^+_{n+1}$ to $\cong^+_{n}$.  Let \mbox{$g_n:R(\mathbb{Q}^n)\rightarrow SDG_{n+1}$} be the Borel reduction defined in Theorem \ref{RQn to SDGn+1}.  Then $h=f \circ g_n$ is a Borel reduction from
$\cong_n$ to $\cong_{n}^{+}$.  Letting $X=X_n$, $\mu=\mu_n$, and $\sigma=\sigma_n$ as in Theorem \ref{existence of probability space}, we have that
\begin{enumerate}
\item[(a)] $X$ is a standard Borel $\SL_n(\mathbb{Z})$-space with $\SL_n(\mathbb{Z})$-invariant ergodic probability measure $\mu$,
\item[(b)] $\sigma$ is a Borel homomorphism from $E^{X_n}_{\SL_n(\mathbb{Z})}$ to $\cong_n$, and
\item[(c)] $\sigma$ is countable-to-one and hence does not map a measure one subset of $X$ to a single $\cong_n$-class.
\end{enumerate}
Adjusting by the appropriate elements of $GL_n(\mathbb{Q})$, we may assume that the order unit of
every element in the range of $h$ is $u=e_0$.
Now let \mbox{$\pi :SDG_{n}\rightarrow R(\mathbb{Q}^n)\times S(\mathbb{Q}^n)$} be the map $\pi (A,A^+,u)=(A,\infin(A,A^+,u))$. Then Theorem \ref{onegroupisomorphismclass} implies that we may assume that $\pi\circ h\circ \sigma$ maps
$X$ into a single $\cong^{\langle e_0\rangle}_{n^*}$ -class. Hence, after adjusting by the appropriate elements of $\GL^{\langle e_0\rangle}_n(\mathbb{Q})$, we
can assume that $\pi\circ h\circ\sigma$ maps $X$ to a single pair, say $(A,I)$. So we have reduced our analysis to the case when  all the dimension groups in the
image of $h\circ\sigma$ have the same underlying torsion-free abelian group $A$, the same group of infinitesimals $I$, and the same distinguished order unit $u=e_0$.

In both of the following cases, we will use (a) and (b) above to show that $h\circ\sigma$ maps a measure-one subset of $X$ to a single $\cong^+_n$-class.  However, this violates (c), and thus completes the proof of Theorem \ref{maintarget}.

\subsection{Case I: $I=\{0\}$}

Fix some $x\in X$.  Let $(A,A^+_x,u)= (h\circ\sigma)(x)$, and let $S_x$ be the stabilizer of $(A,A^+_x,u)$ in
$GL^{\langle e_0\rangle}_n(\mathbb{Q})$.
\begin{claim*}
$S_x$ is finite.
\end{claim*}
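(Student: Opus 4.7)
The plan is to construct a faithful action of $S_x$ on the (finite) set $E$ of extreme points of the state space $S_u(A,A^+_x)$. Section \ref{Section-state spaces} records, as a consequence of the Krein--Milman theorem, that a simple unital dimension group of rank $n$ has $|E|\le n-1$; once faithfulness is established, we conclude that $|S_x|\le |\Sym(E)|\le (n-1)!$.

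For the action, each $\varphi\in S_x$ pulls states back, $\varphi^{\ast}p := p\circ\varphi^{-1}$. Since $\varphi$ is a group automorphism of $A$ that fixes the order unit $u=e_0$ and preserves $A^+_x$, it is routine to verify that $\varphi^{\ast}p$ is again a state, and that $\varphi\mapsto \varphi^{\ast}$ is an affine, weakly continuous action on $S_u(A,A^+_x)$. In particular it permutes $E$, which yields a group homomorphism $\rho\colon S_x\to\Sym(E)$.

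The main step is to show that $\rho$ is injective, and this is precisely where the hypothesis $I=\{0\}$ enters. Suppose $\varphi\in\ker\rho$, so $\varphi^{\ast}$ fixes each extreme state. Because the state space equals the closed convex hull of its extreme points (Krein--Milman) and $\varphi^{\ast}$ is affine and continuous, $\varphi^{\ast}p=p$ for \emph{every} $p\in S_u(A,A^+_x)$. Unwinding the definition, this says $p(\varphi(a)-a)=0$ for every $a\in A$ and every state $p$; equivalently, $\widehat{\varphi(a)-a}\equiv 0$ as an element of $\Aff S_u(A,A^+_x)$. By Theorem \ref{nature of theta}, this forces $\varphi(a)-a$ to be infinitesimal for every $a\in A$, and since $I=\{0\}$ we conclude $\varphi=\id$.

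I anticipate that essentially all of the grind is in the routine verifications that the pullback action is well defined, affine, and continuous, together with confirming that the hypotheses of Theorem \ref{nature of theta} apply to our $(A,A^+_x,u)$; the conceptual core --- that ``no infinitesimals'' plus Theorem \ref{nature of theta} rigidifies the stabilizer once we pass to the finite set of extreme states --- is immediate once the permutation action is in hand.
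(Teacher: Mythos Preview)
Your proposal is correct and matches the paper's proof essentially line for line: both define the pullback action of $S_x$ on the state space, observe it is affine and hence permutes the finite set of extreme states, and then use Theorem~\ref{nature of theta} together with $I=\{0\}$ to show the resulting permutation representation is faithful. The only cosmetic difference is that the paper does not invoke continuity or Krein--Milman explicitly in the subclaim, since in this finite-rank setting every state is already a finite convex combination of the extreme states.
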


\begin{proof}

We examine the action of $S_x$ on the state space $S_u(A,A^+_x)$ defined by $$\varphi.p(a)=p(\varphi^{-1}(a))\text{ for }p\in S_u(A,A^+_x)\text{ and }\varphi\in S_x.$$
Notice that $\varphi\in S_x$ implies that $\varphi^{-1}(a)\in A$ for each $a\in A$, and so the above is well-defined.  Notice also that
\begin{enumerate}
\item $\varphi.p(u)=p(\varphi^{-1}(u))=p(u)=1$; and
\item for any $a\in A^+$, $\varphi^{-1}(a)\in A^+$, and so $\varphi.p(a)=p(\varphi^{-1}(a))\in {\mathbb{R}}^+$.
\end{enumerate}
Thus $\varphi.p\in S_u(A,A^+_x)$.  Notice also that, for any $\varphi\in S_x$, $p,q\in S_u(A,A^+_x)$, and $0\leq \alpha\leq 1$,
$$\varphi.(\alpha p+(1-\alpha)q)=\alpha\varphi.p+(1-\alpha)\varphi.q.$$
Thus any $\varphi\in S_x$ is an affine permutation of the classical simplex $S_u(A,A^+_x)$, and so it must permute the elements of the finite set $E(S_u(A,A^+_x))$.  Hence the following statement implies that $S_x$ is finite.

    \begin{subclaim*}
    If $\varphi\in S_x$, and $\varphi$ acts as the identity on $E(S_u(A,A^+_x))$, then $\varphi=\id$.
    \end{subclaim*}

    \begin{proof}In this case, since each $p\in S_u(A,A^+_x)$ is an affine combination of elements of $E(S_u(A,A^+_x))$, $\varphi$ fixes \emph{every} state $p\in S_u(A,A^+_x)$.
    Now given any $a\in A_x$, recall that $\hat{a}\in \Aff(S_u(A,A^+_x))$ is defined by $\hat{a}(p)=p(a)$.  So choose any state $p\in S_u(A,A^+_x)$ and any $a\in A$.  Then $p(a)=\varphi^{-1}.p(a)$.  Thus $p(a)=p(\varphi(a))$, and so $p(a-\varphi(a))=0$.    This implies that $\widehat{a-\varphi(a)}(p)=0$, and since our choice of $p$ was arbitrary, $\widehat{a-\varphi(a)}=0$.  But since $I=\{0\}$, Theorem \ref{nature of theta} implies $a-\varphi(a)=0$.  Thus $a=\varphi(a)$ for every $a\in A$, and so $\varphi=\id$.
    \end{proof}
\end{proof}

Thus there are only countably many possibilities for the stabilizer of $(h\circ\sigma)(x)=(A,A_x,u)$ in $\GL_n^{\langle
e_0\rangle}(\mathbb{Q})$ and we can proceed as in the proof of \cite[Theorem 3.5]{t1}.  For completeness, we give the details here.  For the rest of this case, let $E=E^X_{\SL_n(\mathbb{Z})}$. Since $\mu$ is countably-additive, there exists a Borel subset $X_1\subseteq X$ with $\mu(X_1)>0$ and a fixed
finite subgroup $S$ of $\GL_n^{\langle e_0\rangle}(\mathbb{Q})$ such that $S_x=S$ for all $x\in X_1$.  By the
ergodicity of $\mu$, we have that $\mu(\SL_n(\mathbb{Z}).X_1)=1$. In order to simplify notation, we shall assume
that $\SL_n(\mathbb{Z}).X_1=X$. After slightly adjusting $h\circ\sigma$ if necessary, we can also assume that $S_x=S$ for
all $x\in X$. (That is, let $c\colon X\rightarrow X$ be a Borel function such that $c(x)E x$ and $c(x)\in X_1$
for all $x\in X$.  Then we can replace $h\circ\sigma$ with $h\circ\sigma\circ c$.)

Now suppose that $x,y\in X$ and that $xEy$.  Then $(A,A^+_x,u) \cong (A,A^+_y,u)$ and so there exists $\varphi\in \GL_n^{\langle
e_0\rangle}(\mathbb{Q})$ such that $\varphi(A,A^+_x,u)=(A,A^+_y,u)$. Notice that
$$\varphi S\varphi^{-1}=\varphi S_x\varphi^{-1}=S_y=S$$
and so $\varphi\in N=N_{\GL_n^{\langle e_0\rangle}(\mathbb{Q})}(S)$.  Let $H=N/S$ and for each $\varphi\in N$,
let $\overline{\varphi}=\varphi S$. Then we can define a cocycle $\alpha\colon \SL_n(\mathbb{Z})\times
X\rightarrow H$ by
$$\alpha(g,x)=\text{ the unique element }\overline{\varphi}\in H\text{ such that }\varphi(A,A^+_x,u)=(A,A^+_{g\cdot x},u).$$

Now since $S$ is finite, it is a closed subgroup of $N$, and so $H$ is a  algebraic $\mathbb{Q}$-group (See for example \cite[5.5.10]{Springer}). Furthermore we have the following, where the last inequality holds because $n\geq 3$,
$$\dim H\leq\dim GL_n^{\langle e_0\rangle}(\Omega)=n^2-(n-1)<n^2-1.$$

Thus, by Theorem \ref{cocycletheorem}, $\alpha$ is equivalent to a cocycle $\beta$ such that
$\beta(SL_n(\mathbb{Z})\times X)$ is contained in a finite subgroup $K$ of $H$. To simplify notation, we shall
assume that $\beta=\alpha$. Then for each $x\in X$,
\begin{eqnarray*}
\Phi(x)&=&\{\varphi(A,A^+_x,u)\mid\overline{\varphi}=\alpha(g,x)\text{ for some }g\in \SL_n(\mathbb{Z})\}\\ &=&\{(A,A^+_z,u)
\mid zEx\}
\end{eqnarray*}
is finite; and clearly if $xEy$, then $\Phi(x)=\Phi(y)$. But this means that $\Phi$ is a Borel homomorphism from $E$ to the identity relation on the standard Borel space of finite subsets of $SDG_n$.  Hence, by Theorem \ref{reformulation of ergodic}, there exists a Borel subset $X_2\subseteq X$ with $\mu(X_2)=1$ such that $\Phi(x)=\Phi(y)$ for all $x,y\in X_2$; and this means that $h\circ\sigma$ maps $X_2$ into a single $\cong_n^+$-class, as desired.

Of course, after a suitable adjustment of $h\circ\sigma$, we can assume that $h\circ\sigma$ maps $X_2$ to a single dimension group.  This observation will be helpful in our analysis of Case II.

\subsection{Case II: $I\neq\{0\}$}

Consider some $x\in X$ and the dimension group $(A,A^+_x,u)=(h\circ\sigma)(x)$.  Consider the quotient group $A/I$.  Theorem \ref{nature of theta} implies
$$a\in A^+_x\backslash\{0\}\text{ and }b\in I \implies a+b\in A^+_x\backslash\{0\},$$
since in this case $\widehat{(a+b)}=\widehat{a}+\widehat{b}=\widehat{a}$.  It is easy to see that $(A/I,C^+_x,v)$ is a simple dimension group, where $C^+_x=\{a+I\mid a\in A^+_x\}$ and $v=u+I$.  We check the Riesz Interpolation Property.  Consider $a,b,c,d\in A$ such that \mbox{$a+I,b+I\leq c+I,d+I$.}  Then $c-a+I, c-b+I, d-a+I, d-b+I\in C^+_x$.  This implies that \mbox{$c-a,c-b,d-a,d-b\in A^+_x$,} and so we may apply the Riesz Interpolation Property to obtain some $e\in A$ such that $a,b\leq e\leq c,d$.  Then $a+I,b+I\leq e+I\leq c+I,d+I$, and we are done.


So by Case I, we may assume that
there is a subset $X_1\subseteq X$ with $\mu(X_1)=1$ such that for
every $x,y\in X_1$, $(A/I,C_x^+,v)=(A/I,C_y^+,v)$.
Notice that if $p\in S_u(A,A^+_x)$, then $p^*(a+I)=p(a)$ defines a state $p^*\in S_v(A/I,C^+_x)$.  In fact, this defines a one-to-one correspondence between $S_u(A,A^+_x)$ and $S_v(A/I,C^+_x)$. Thus for $x,y\in X_1$, we have the following, where the last implication is due to Corollary \ref{extreme points determines A^+}:
\begin{align*}
(A/I,C_x^+,v)=(A/I,C_y^+,v)&\implies S_v((A/I,C_x^+))=S_v((A/I,C_y^+))\\
&\implies S_u(A,A_x^+)=S_u(A,A_y^+)\\
&\implies (A,A_x^+,u)=(A,A_y^+,u).
\end{align*}
And so $h\circ\sigma$ maps $X_1$ to a single dimension group.

\section{Application to LDA-groups}\label{section-application to LDA groups}
In this section, we refine Theorem \ref{LDA is more complex than all SDG_n} to show that the Borel complexity of countable simple thick unital LDA-groups is also strictly increasing with an appropriate notion of ``rank''.


\begin{definition}
Given a Bratteli diagram $(V,E,d)$ where $\displaystyle V=\bigsqcup_{n\in{\mathbb{N}}} V_n$, we define
$$\rank(V,E,d)=\liminf_{n\rightarrow\infty} |V_n|.$$
\end{definition}

However, Bratteli diagrams with different ranks may give rise to the same LDA-group (and the same dimension group). For example the rank $1$ diagram:

\begin{diagram}
&&\bullet&&&&\bullet&&&&\bullet&\dots\\
&\ruTo&&\rdTo&&\ruTo&&\rdTo&&\ruTo&&\\
\bullet&\rTo&\bullet&\rTo&\bullet&\rTo&\bullet&\rTo&\bullet&\rTo&\bullet&\dots\\
\end{diagram}

telescopes to the rank $2$ diagram:
\begin{diagram}
&&\bullet&\rTo&\bullet&\rTo&\bullet&\dots\\
&\ruTo&&\ruTo\rdTo&&\ruTo\rdTo&&\\
\bullet&\rTo&\bullet&\rTo&\bullet&\rTo&\bullet&\dots\\
\end{diagram}
Thus we define:
\begin{definition}
Let $\mathcal{BD}_n$ denote the standard Borel space of simple Bratteli diagrams which are $\sim$-equivalent to a Bratteli diagram of rank at most $n$.  That is, we let
$$\mathcal{BD}_n=\{B\in\mathcal{BD}\mid \exists B'\sim B\text{ with }\rank B'\leq n\}.$$
Let $BD_n$ be the equivalence relation obtained by restricting $\sim$ to $\mathcal{BD}_n$.
\end{definition}

Notice that for any Bratteli diagram $B$, $\rank B\geq \rank(K_0(B))$.  Thus
\mbox{$BD_n\leq_B \left(\cong^+_n\right)$.}  Hence $BD_n$ is a Borel equivalence relation, and clearly $BD_n \leq_B BD_{n+1}$.

On the other hand, there are simple dimension groups whose rank is strictly less than that of each of the Bratteli diagrams which generate them. For example, Elliott \cite[2.7]{elliott2} has shown that the simple dimension group $A=\mathbb{Z}[\frac{1}{3}]\oplus\mathbb{Z}$ (here $\mathbb{Z}[\frac{1}{3}]$ denotes the triadic rationals) with positive cone $A^+=\{(a,b)\in A\mid a>0\}\cup \{(0,0)\}$ cannot be defined by a Bratteli diagram of rank less than $3$. However, we will find it convenient to ignore these types of dimension groups:

\begin{definition}
If a dimension group $(A,A^+,u)$ may be written as $K_0(B)$ for some Bratteli diagram $B$ where all the
maps $\varphi_n$ from the definition of $K_0(B)$ are one-to-one, then $(A,A^+,u)$ is said to be \emph{ultrasimplicial}.
\end{definition}

\begin{lemma}\label{ultrasimplicial does what we want}
If $(V,E,d)$ is a Bratteli diagram such that all the maps $\varphi_n$ are one-to-one, and $K_0(V,E,d)$ is a finite rank dimension group, then $\rank(V,E,d)=\rank(K_0(V,E,d))$, and there exists $N\geq 1$ such that $|V_n|=\rank(K_0(V,E,d))$, for all $n>N$.
\end{lemma}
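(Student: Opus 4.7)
The plan is to exploit the fact that when every $\varphi_n$ is injective, the direct limit $K_0(V,E,d)$ contains a copy of $\mathbb{Z}^{|V_n|}$ for each $n$, and conversely every element of $K_0(V,E,d)$ already lives at some finite level. These two observations, combined with the assumption of finite rank, will pin the sequence $|V_n|$ down to an eventual constant value equal to $\rank(K_0(V,E,d))$.

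First I would establish the upper bound $|V_n|\leq \rank(K_0(V,E,d))$ for every $n$. Since each $\varphi_m$ is one-to-one, every composition $\varphi_{n+k}\circ\cdots\circ\varphi_{n+1}\colon\mathbb{Z}^{|V_n|}\to\mathbb{Z}^{|V_{n+k}|}$ is injective, and consequently the canonical map from $\mathbb{Z}^{|V_n|}$ into the direct limit $K_0(V,E,d)$ is injective as well. Thus $\mathbb{Z}^{|V_n|}$ embeds in $K_0(V,E,d)$, and the rank inequality follows. In particular, if we set $r:=\rank(K_0(V,E,d))$, then $|V_n|\leq r$ for all $n$, so $\rank(V,E,d)=\liminf_n|V_n|\leq r$.

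Next I would prove the reverse inequality by exhibiting a single level whose size is at least $r$. Pick $r$ linearly independent elements $x_1,\ldots,x_r\in K_0(V,E,d)$. Because $K_0(V,E,d)$ is the direct limit of the groups $\mathbb{Z}^{|V_n|}$, there exists some $n_0$ such that all of $x_1,\ldots,x_r$ arise from elements of $\mathbb{Z}^{|V_{n_0}|}$ (using injectivity of the transition maps, these preimages are unique and remain linearly independent). Hence $|V_{n_0}|\geq r$, and combined with the upper bound of the previous paragraph we conclude $|V_{n_0}|=r$.

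Finally I would propagate this equality to every larger level. For any $n\geq n_0$, the composition of injective transition maps $\mathbb{Z}^{|V_{n_0}|}\hookrightarrow\mathbb{Z}^{|V_n|}$ forces $|V_n|\geq|V_{n_0}|=r$; together with $|V_n|\leq r$ this gives $|V_n|=r$ for all $n\geq n_0$. Taking $N=n_0$ yields the stabilization claim, and since $|V_n|$ is eventually constantly $r$, we obtain $\rank(V,E,d)=\liminf_n|V_n|=r=\rank(K_0(V,E,d))$. The only step requiring any care is the observation that linearly independent elements of $K_0(V,E,d)$ lift to linearly independent elements at a common finite level, which is a direct consequence of the injectivity hypothesis on the $\varphi_n$; there is no real obstacle once that is noted.
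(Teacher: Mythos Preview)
Your proof is correct and essentially mirrors the paper's argument: both use that injectivity of the $\varphi_n$ forces $\mathbb{Z}^{|V_n|}$ to embed into all later levels and into the limit, which pins down the eventual value of $|V_n|$. The only cosmetic difference is the starting point---the paper sets $r=\liminf_n|V_n|$ and shows it equals $\rank(K_0(V,E,d))$, whereas you set $r=\rank(K_0(V,E,d))$ and show it equals the eventual $|V_n|$---but the underlying idea is identical.
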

\begin{proof}
Set $r=\rank(V,E)=\liminf_{n\rightarrow\infty} |V_n|$.  Let $N\geq 1$ be the least natural number such that $|V_N|=r$.  We claim that if $n> N$, then $|V_n|=|V_N|$.  Otherwise, either $|V_n|< |V_N|$ and then $\varphi_n\circ\ldots\circ\varphi_{N+1}$ is not injective, or else  $|V_n|> |V_N|$ and then there is some $m>n$ such that $|V_m|=|V_N|<|V_n|$ and $\varphi_m\circ\ldots\circ\varphi_{n+1}$ is not injective.

Next, the injectivity of the embeddings $\varphi_n$ implies that the natural basis of $\mathbb{Z}^{V_N}$ must be linearly independent in the limit.  Hence $\rank(K_0(V,E,d))\geq r$.
\end{proof}

We shall show that the dimension groups involved in the proof of Theorem \ref{maintarget} are all ultrasimplicial.

\begin{thm}\label{p-local implies ultrasimplicial}
Suppose $G$ is a $p$-local torsion-free abelian group of rank n, where $p>n$.  Then the dimension group $g_n(G)$
given by Lemma \ref{RQn to SDGn+1} is ultrasimplicial.
\end{thm}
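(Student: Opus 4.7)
The plan is to construct an explicit Bratteli diagram $(V,E,d)$ whose transitions $\varphi_k$ are all injective and whose $K_0$ is $g_n(G)$. Since $g_n(G)=G\oplus\QQ$ has rank $n+1$, each level $V_k$ will carry exactly $n+1$ vertices, which we intend to identify with an ``affine simplex'' of positive elements spanning a free abelian rank-$(n+1)$ subgroup of $A$. First I would express $G$ as an increasing union $\bigcup_k H_k$ of free abelian rank-$n$ subgroups with chosen bases $f_1^k,\ldots,f_n^k$ of $H_k$. Extend to $f_{n+1}^k := -(f_1^k+\cdots+f_n^k)$, assign each vertex $v_i^k\in V_k$ the element $\zeta_i^k:=(f_i^k,\,1/((n+1)D_k))\in A^+$, and give each vertex the common integer label $d(v_i^k)=D_k$. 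The symmetry guarantees that the order unit $u=(0,1)$ equals $\sum_i D_k\zeta_i^k$ at every level; the inclusions $\varphi_k$ are then forced by the relations $\zeta_v^k=\sum_u m_{u,v}^{k+1}\zeta_u^{k+1}$.

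Next I would compute the incidence matrix $M^{k+1}$. If $a=(a_{vl})$ denotes the matrix of the inclusion $H_k\hookrightarrow H_{k+1}$ in the chosen bases, with row sums $s_v:=\sum_l a_{vl}$, the compatibility relation forces $m_{u,v}^{k+1}=a_{vu}+m_{n+1,v}^{k+1}$ for $u\leq n$ and $m_{n+1,v}^{k+1}=(R-s_v)/(n+1)$, where $R=D_{k+1}/D_k$. A short row-reduction (subtract the last row from each of the first $n$) computes $\det M^{k+1}=\pm R\cdot\det a$, which is nonzero, so $\varphi_{k+1}$ is automatically injective. The two substantive constraints remaining are the congruences $(n+1)\mid(R-s_v)$ for each $v$, and the nonnegativity of all entries; the latter is arranged by taking $R$ sufficiently large.

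The hypothesis $p>n$ enters in two places. The congruences reduce to $s_1\equiv\cdots\equiv s_n\pmod{n+1}$ (the $v=n+1$ case then follows automatically from $s_{n+1}=-\sum s_v$ and $-n\equiv 1\pmod{n+1}$), and the freedom to replace the basis of $H_{k+1}$ by an arbitrary $\GL_n(\ZZ)$-transform amounts to replacing the column vector $(s_v)$ by $a\cdot w$ for any primitive $w\in\ZZ^n$. A rank count in $(\ZZ/(n+1))^n$ shows that the preimage of $\langle \mathbf{1}\rangle$ under the reduction of $a$ has size at least $n+1$, so a nonzero $\bar w$ in that preimage exists and lifts to a primitive integer vector. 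To ensure that the direct limit really exhausts $A$, I would interleave pure-scaling transitions ($H_{k+1}=H_k$, with $R\equiv 1\pmod{n+1}$ absorbing primes coprime to $n+1$) with basis-enlarging transitions that introduce new denominators from $G$; the combination of $p$-locality of $G$ and the bound $p>n$ guarantees that $p$ is larger than every prime that divides $n+1$ and is distinct from $p$, so the obstructions from the congruence never collide with the primes needed to recover $\QQ$ in the second coordinate.

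The main obstacle I anticipate is the bookkeeping to show that the two kinds of transitions can be scheduled so that every $(g,q)\in A^+$ is eventually represented at some finite level as a standard positive vector of $(\ZZ^{n+1})^+$. I expect this to reduce to a Cantor-style enumeration of representative elements of $A^+$, combined with Dirichlet's theorem on primes in arithmetic progressions to show that every prime eventually appears as a factor of some $R_k$ or of some $[H_{k+1}:H_k]$.
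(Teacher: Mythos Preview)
Your plan shares its central device with the paper: an ``affine simplex'' of $n+1$ positive elements, consisting of a basis $f_1,\ldots,f_n$ together with $-\sum f_i$, each shifted into $A^+$ by a small positive rational in the $\QQ$-coordinate. The difference is packaging. The paper does not build the Bratteli diagram directly; it invokes Handelman's criterion, which says $(A,A^+,u)$ is ultrasimplicial iff every finite $\{x^i\}\subset A^+$ lies in the nonnegative integer span of some rationally independent $\{s_j\}\subset A^+$. One then writes down a single simplex $\{s_j\}$ adapted to the given $\{x^i\}$ (with $y_k=\tfrac1{n+1}\gcd_i|x^i_k|$ and a free parameter $N$ in the $\QQ$-coordinate), and verifies by one linear-algebra identity that each $x^i$ is a nonnegative integer combination. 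Handelman's lemma absorbs exactly the ``exhaust $A$'' bookkeeping you flag as the main obstacle; there is no inductive chain, no basis changes between levels, and no appeal to Dirichlet.

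Your explanation of where $p>n$ enters is off, and there is a genuine gap in your congruence step. The rank count does give $\lvert\bar a^{-1}\langle\mathbf 1\rangle\rvert\geq n+1$, hence a nonzero $\bar w$; but a nonzero $\bar w\in(\ZZ/(n+1))^n$ need \emph{not} lift to a primitive integer vector when $n+1$ is composite (e.g.\ $\bar w=(2,2,2)\in(\ZZ/4)^3$ has no primitive lift). What actually makes this work is $p$-locality: $[H_{k+1}:H_k]$ is a power of $p$, so $\det a$ is a unit mod $n+1$ whenever $p\nmid(n+1)$, and then $\bar w:=\bar a^{-1}\mathbf 1$ exists and satisfies $\gcd(\bar w_i,n+1)=1$ (since $\sum_j a_{ij}\bar w_j\equiv 1$), giving the primitive lift. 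This is the same use of the hypothesis as in the paper, where $p\nmid(n+1)$ is what allows dividing by $n+1$ inside the $\ZZ_{(p)}$-module $G$; note that the paper's proof text actually assumes $p>n+1$. Your remark about ``primes needed to recover $\QQ$ in the second coordinate'' is a red herring: the $\QQ$-summand carries no arithmetic constraint, and Dirichlet's theorem is not needed.
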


Before we prove this, we show how this gives the analogue of Theorem \ref{maintarget} for simple Bratteli diagrams.

\begin{corollary}\label{main corollary}
For $n\geq 3$, $\BD_n <_B \BD_{n+1}$
\end{corollary}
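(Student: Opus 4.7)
To prove Corollary \ref{main corollary}, I first observe that the inclusion $\mathcal{BD}_n\subseteq\mathcal{BD}_{n+1}$ immediately gives $\BD_n\leq_B\BD_{n+1}$, so only strictness requires attention. The plan is to argue by contradiction: assuming a Borel reduction $\psi\colon\mathcal{BD}_{n+1}\to\mathcal{BD}_n$ of $\BD_{n+1}$ into $\BD_n$, I will build a Borel reduction from the restriction of $\cong_n$ to the $p$-local test space into $\cong^+_n$, and then invoke the \emph{proof} (not merely the statement) of Theorem \ref{maintarget} to derive a contradiction.

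The construction will fix a prime $p>n+1$ and produce a map $\Phi\colon R^{(p)}(\mathbb{Q}^n)\to SDG_n$ as the composite of four Borel reductions. First, the map $g_n\colon R(\mathbb{Q}^n)\to SDG_{n+1}$ of Theorem \ref{RQn to SDGn+1}; second, a Borel selection $G\mapsto B_G\in\mathcal{BD}_{n+1}$ with $K_0(B_G)\cong^+ g_n(G)$, obtained by refining the Borel realization of Effros-Handelman-Shen already used in the proof of Theorem \ref{LDA is more complex than all SDG_n}, together with Theorem \ref{p-local implies ultrasimplicial} (so that $g_n(G)$ is ultrasimplicial) and Lemma \ref{ultrasimplicial does what we want} (so that $\rank B_G=n+1$); third, the assumed reduction $\psi$; and fourth, the Borel reduction $\BD_n\leq_B(\cong^+_n)$ already noted after the definition of $\mathcal{BD}_n$. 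Each step is a Borel reduction between the relevant equivalence relations, so $\Phi$ is a Borel reduction of the restricted equivalence relation $\cong_n$ on $R^{(p)}(\mathbb{Q}^n)$ into $\cong^+_n$.

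For the contradiction, take $X_n,\mu_n,\sigma_n$ from Theorem \ref{existence of probability space} for the same prime $p$, so that $\sigma_n(X_n)\subseteq R^{(p)}(\mathbb{Q}^n)$, and consider $\Phi\circ\sigma_n\colon X_n\to SDG_n$. This is a Borel homomorphism from $E^{X_n}_{\SL_n(\mathbb{Z})}$ to $\cong^+_n$ that factors through a Borel reduction out of $R^{(p)}(\mathbb{Q}^n)$, which is precisely the setup dissected in Section \ref{section-proof of main theorem}. Running its Case I/Case II argument verbatim --- adjust the order unit to $e_0$, apply Theorem \ref{onegroupisomorphismclass} to freeze the pair $(A,I)$, invoke the cocycle reduction Theorem \ref{cocycletheorem} to finitize stabilizers, and in Case II pass to the quotient $A/I$ to reduce to Case I --- forces $\Phi\circ\sigma_n$ to be constant up to $\cong^+_n$ on a $\mu_n$-conull subset of $X_n$. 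Because $\Phi$ is a reduction, that conull subset must then map into a single $\cong_n$-class under $\sigma_n$, contradicting countable-to-oneness of $\sigma_n$.

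The hard part will be verifying step two of the construction: I will need to check that the Borel implementation of Effros-Handelman-Shen used in Theorem \ref{LDA is more complex than all SDG_n} can be refined so that, whenever its input dimension group is ultrasimplicial, its output Bratteli diagram has injective transition maps $\varphi_k$. Given this Borel rank-preservation, Lemma \ref{ultrasimplicial does what we want} places $B_G$ in $\mathcal{BD}_{n+1}$ on the nose, and the remainder of the argument simply pipes the composite $\Phi$ into the cocycle machinery of Section \ref{section-proof of main theorem}.
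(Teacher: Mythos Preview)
Your proposal is correct and follows essentially the same route as the paper's proof: assume a reduction $\psi$, precompose with $g_n$ and a Borel Effros--Handelman--Shen map (using Theorem \ref{p-local implies ultrasimplicial} and Lemma \ref{ultrasimplicial does what we want} to land in $\mathcal{BD}_{n+1}$), postcompose with $K_0$, and feed the resulting composite $X_n\to SDG_k$ into the cocycle machinery of Section \ref{section-proof of main theorem} to contradict the countable-to-oneness of $\sigma_n$. The only substantive difference is in your fourth step: you invoke the blanket reduction $\BD_n\leq_B(\cong^+_n)$ stated after the definition of $\mathcal{BD}_n$, whereas the paper's own proof is more explicit here, sending $\mathcal{BD}_n$ via $K_0$ into $\bigsqcup_{i\leq n}\mathcal{SDG}_i$ and then using ergodicity of $\mu_n$ to pass to a conull set landing in a single $\mathcal{SDG}_k$ before running Section \ref{section-proof of main theorem}. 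Both routes work, and the ``hard part'' you flag (Borel rank preservation for ultrasimplicial inputs) is exactly the point the paper absorbs into the phrase ``arguing as in the proof of Theorem \ref{LDA is more complex than all SDG_n}''.
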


\begin{proof}
Suppose that $f:\mathcal{BD}_{n+1} \rightarrow \mathcal{BD}_n$ is a Borel reduction from $\BD_{n+1}$ to $\BD_n$.  Let
\mbox{$g_n:R(\mathbb{Q}^n)\rightarrow SDG_{n+1}$} be the Borel reduction from $\cong_n$ to $\cong^+_{n+1}$ defined in Lemma \ref{RQn to SDGn+1}.
As in Section \ref{section-proof of main theorem}, we consider $X=X_n$, $\mu=\mu_n$, and $\sigma=\sigma_n$ from Theorem \ref{existence of probability space}.  If we pick $p>n$ when defining $X$, $\mu$, and $\sigma$, then Theorem \ref{p-local implies ultrasimplicial} says that every group in the image of $g_n\circ\sigma$ is ultrasimplicial.

Arguing as in the proof of Theorem \ref{LDA is more complex than all SDG_n}, we obtain a Borel reduction $$j:(g_n\circ\sigma)(X) \rightarrow \mathcal{BD}_{n+1}$$ from $\cong^+_{n+1}\upharpoonright_{(g_n\circ\sigma)(X)}$ to $BD_{n+1}$.  (Lemma \ref{ultrasimplicial does what we want} assures us that the rank of every Bratteli diagram in the range of this map is at most $n+1$.)  Next, the assignment $B\mapsto K_0(B)$ gives a Borel reduction
$$h:\mathcal{BD}_n\rightarrow \bigsqcup_{i\leq n} \mathcal{SDG}_i$$ from $BD_n$ to $\bigsqcup_{i\leq n}\cong^+_{i}$. Then the following composition is a Borel homomorphism from $E^X_{\SL_n(\mathbb{Z})}$ to $\bigsqcup_{i\leq n} \cong^+_i$:
$$X\stackrel{\sigma}{\rightarrow}R(\mathbb{Q}^n)\upharpoonright_{\sigma(X)}\stackrel{g_n}{\rightarrow} \mathcal{SDG}_{n+1}\upharpoonright_{(g_n\circ\sigma)(X)}\stackrel{j}{\rightarrow} \mathcal{BD}_{n+1} \stackrel{f}{\rightarrow} \mathcal{BD}_n \stackrel{h}{\rightarrow}\bigsqcup_{i\leq n} \mathcal{SDG}_i.$$

Clearly there exists a subset $X_1\subseteq X$ with $\mu(X_1)>0$ such that the above maps $X_1$ to $\mathcal{SDG}_k$ for some $k\leq n$.  Then by the ergodicity of $\mu$, $\mu(\SL_n(\mathbb{Z}).X_1)=1$.  Replacing $X$ by $\SL_n(\mathbb{Z}).X_1$, the analysis of Section \ref{section-proof of main theorem} again shows that there is a measure one subset of $X$ which maps to a single $\cong^+_k$-class.  This implies that $\sigma$ maps a measure one subset of $X$ to a single $\cong_n$-class, which is a contradiction.
\end{proof}

\begin{proof}[Proof of Theorem \ref{p-local implies ultrasimplicial}]

We will prove that $g_n(G)$ satisfies the following criteria for ultrasimpliciality:

\begin{lemma}{\cite{handelman}}
Let $(A,A^+,u)$ be a countable dimension group.  Then $(A,A^+,u)$ is ultrasimplicial if and only if for all
finite subsets $\{x_i\}^n_{i=1}$ of $A^+$,

$(\ast)$ there exists a finite subset $\{s_j\}^m_{j=1}$ of $A^+$ such that
\begin{enumerate}
\item  $\{s_j\}^m_{j=1}$ is rationally independent;
\item there exist $m_{ij}$ in $\mathbb{N}\cup\{0\}$ with $x_i=\sum m_{ij} s_j$, for all $i$.
\end{enumerate}
\end{lemma}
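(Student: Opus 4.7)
My plan is to prove the biconditional by establishing each direction separately, using the fact that ultrasimpliciality is exactly the property of being a direct limit of simplicial groups $\mathbb{Z}^{k_n}$ (with the standard positive cones) along injective positive homomorphisms $\varphi_n$.

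For the easy direction, suppose $(A,A^+,u)=K_0(V,E,d)$ with all connecting maps $\varphi_n$ injective; then the induced maps $\varphi_{n,\infty}\colon\mathbb{Z}^{k_n}\to A$ are also injective. Given any finite set $\{x_i\}_{i=1}^n\subseteq A^+$, the fact that $A^+=\bigcup_k \varphi_{k,\infty}((\mathbb{Z}^{k_k})^+)$ lets me pick a single level $N$ large enough that each $x_i$ is the image of some $\tilde{x}_i\in(\mathbb{Z}^{k_N})^+$. I would then set $s_j=\varphi_{N,\infty}(e_j)$ for the standard basis vectors $e_j$ of $\mathbb{Z}^{k_N}$: these lie in $A^+$, the coefficients in $\tilde{x}_i=\sum_j m_{ij}e_j$ are nonnegative integers witnessing condition (2), and rational independence of $\{s_j\}$ follows directly from injectivity of $\varphi_{N,\infty}$.

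The harder direction requires constructing a Bratteli diagram from condition $(\ast)$. I would enumerate $A^+=\{a_1,a_2,\ldots\}$ with $a_1=u$ and build the diagram level by level: apply $(\ast)$ to $\{u\}$ at level $0$ to get rationally independent generators $\{s^{(0)}_j\}_{j=1}^{k_0}\subseteq A^+$, and inductively apply $(\ast)$ at level $n$ to $\{a_1,\ldots,a_n\}\cup\{s^{(n-1)}_j\}$ to get $\{s^{(n)}_j\}_{j=1}^{k_n}$. Writing each $s^{(n-1)}_j=\sum_i m^{(n)}_{ij}s^{(n)}_i$ with $m^{(n)}_{ij}\in\mathbb{N}\cup\{0\}$, the matrix $M_n=(m^{(n)}_{ij})$ serves as the incidence matrix from $V_{n-1}$ to $V_n$, and labeling each $d(v^{(n)}_i)$ as the coefficient of $s^{(n)}_i$ in the level-$n$ expression of $u$ makes $(\dagger)$ hold with equality.

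The main obstacle is verifying that the resulting $K_0$ of this diagram is genuinely $(A,A^+,u)$. The crucial observation is that injectivity of each connecting map $\mathbb{Z}^{k_{n-1}}\to\mathbb{Z}^{k_n}$ is exactly where rational independence of $\{s^{(n-1)}_j\}$ gets used: if $\sum_j n_j e_j$ maps to $0$, then $\sum_j n_j s^{(n-1)}_j=0$ in $A$, forcing each $n_j=0$. Given this, surjectivity of $\varinjlim\mathbb{Z}^{k_n}\to A$ follows because each $a_m$ enters the image by stage $m$, injectivity of the limit map follows from injectivity at each stage, and preservation of both the positive cone and the distinguished order unit is built into the construction via nonnegativity of the $m^{(n)}_{ij}$ and our choice of $d$.
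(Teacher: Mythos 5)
The paper does not actually prove this lemma: it is quoted from Handelman \cite{handelman} and used as a black box, so there is no in-paper argument to compare yours against. That said, your proposal is essentially the standard proof of Handelman's criterion and is correct in substance. The forward direction is fine: injectivity of the connecting maps passes to the maps $\varphi_{N,\infty}\colon\mathbb{Z}^{k_N}\to A$, the images of the standard basis vectors are then rationally independent because $A$ is torsion-free, and every finite subset of $A^+$ lifts to a single finite level because the positive cone of the direct limit is the union of the images of the simplicial cones. The reverse direction is also sound: rational independence of each $\{s^{(n)}_j\}$ gives injectivity both of the connecting matrices and of the induced maps $\mathbb{Z}^{k_n}\to A$, the enumeration of $A^+$ forces the limit cone to map onto exactly $A^+$, and $A=A^+-A^+$ gives surjectivity onto $A$.

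One wrinkle is worth flagging. You define $d(v^{(n)}_i)$ to be the coefficient of $s^{(n)}_i$ in the level-$n$ expansion of $u$, but nothing in $(\ast)$ prevents some of these coefficients from vanishing: for $A=\mathbb{Z}^2$ with the standard cone and $u=(1,1)$, the set $\{(1,1),(1,0)\}$ witnesses $(\ast)$ for $\{u,(1,0)\}$ yet gives $u=1\cdot(1,1)+0\cdot(1,0)$. Since Definition \ref{Bratteli diagram} requires $d\colon V\to\mathbb{N}\setminus\{0\}$, your diagram need not literally satisfy the paper's axioms. This is a bookkeeping artifact of the paper's scale-laden formulation of $K_0(B)$ rather than a gap in the mathematical content --- Handelman's notion of ultrasimpliciality, and the use this paper makes of the criterion in Theorem \ref{p-local implies ultrasimplicial}, concern only the ordered group $(A,A^+)$ --- but to match the stated definitions exactly you would need either to argue that a witnessing set whose $u$-coefficients are all strictly positive can always be chosen, or to treat the unit and scale separately from the order isomorphism.
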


And we will use the following extension of the notions of $\gcd$ and $\lcm$ to the rationals.
\begin{definition}
Given a finite set of positive rational numbers $\{q_1,q_2,\ldots,q_n\}$, define $\gcd\{q_1,q_2,\ldots,q_n\}$ to be the greatest positive rational number $q$ such that for every $1\leq i\leq n$, $q_i=m_i q$ for some $m_i\in\mathbb{N}^+$.  Similarly let $\lcm\{q_1,q_2,\ldots,q_n\}$ be the least positive rational $r$ such that $1\leq i\leq n$, $r=m_i q_i$ for some $m_i\in\mathbb{N}^+$.
\end{definition}
Let $G$ be a $p$-local torsion-free abelian group of rank n, where $p>n+1$.  Let $(G\oplus\mathbb{Q},
(G\oplus\mathbb{Q})^+,(0,1))$ be the dimension group defined by $$(G\oplus\mathbb{Q})^+=\{ (h,q)\in
G\oplus\mathbb{Q} : q>0\}\cup \{ (0,0)\}.$$ Recall that $G\leq\mathbb{Q}^n$, and let
$\displaystyle\{x^i=(x_0^i,x_1^i,x_2^i,\ldots,x_{n-1}^i)\oplus(x_n^i)\}_{i\leq m}$ be a finite set of elements of
$(G\oplus\mathbb{Q})^+$. Let $\displaystyle y_k=\frac{1}{n+1}\gcd_{i\leq m}\{\left|x_k^i\right|\}$ for all \mbox{$0\leq k\leq n$.}
Next, for $0\leq j\leq n-1$, let $s_j=(0,0,\ldots,y_j,\ldots,0)\oplus(\frac{y_n}{n^{N}})$, where $y_j$ is in the
$j$-th slot, and $N\in\mathbb{N}$ is some constant determined below.  Finally, let
$$s_n=(-y_0,-y_1,\ldots,-y_j,\ldots,-y_{n-1})\oplus(\frac{y_n}{n^{N}}).$$

We claim that if $N$ is large enough, then $\{s_j\}_{i=0}^n$ fulfills $(\ast)$.  Clearly, the $\{s_j\}$ are rationally
independent.  Given $i\leq m$, we want to express $x^i$ as a sum of nonnegative integer multiples of the $s_j$.  Let $\displaystyle z=\lcm_{i,k} \{| x_k^i |,1\}$, and consider the sum
$$\displaystyle S = z s_n +\sum_{k=0}^{n-1} \left( \frac{x_k^i}{y_k} + z\right) s_k.$$
The coefficients $z$ and $\displaystyle \frac{x_k^i}{y_k} + z$ are both positive integers, and note that, for $0\leq k < n$,
$$S_k=z (-y_k)+\left(\frac{x^i_k}{y_k} +z \right) y_k = x^i_k.$$
We can then add some multiple $M^i$ of $\displaystyle \sum_{j=0}^n s_j$ to this sum
without changing the first $n$ coordinates.  So we just solve for $M^i$. We have that
$$x^i=z s_n+\sum_{k=0}^{n-1} \left(\frac{x_k^i}{y_k}+z\right)s_k + M^{i}\sum_{j=0}^n s_j$$
thus,
$$x_n^i=\frac{z y_n}{n^N}+\sum_{k=0}^{n-1}\left( \frac{x_k^i}{y_k}+z\right)  \frac{y_n}{n^N} + M^{i}(n+1)\frac{y_n}{n^N}$$
Then,
$$M^i=\frac{x_n^i-\sum_{k=0}^{n-1}\left( \frac{x_k^i}{y_k} +z \right)\frac{y_n}{n^N} - \frac{z y_n}{n^N}} {(n+1)\frac{y_n}{n^{N}}}
=\frac{x_n^i}{y_n}\frac{n^{N}}{n+1}-\left(\sum_{k=0}^{n-1}\frac{x_k^i}{y_k}\right)\frac{1}{n+1}-(n+1)\frac{z}{n+1}$$

By the definitions of $y_k$ and $z$, the righthand expression shows us that $M^i$ is an integer. Since $x^i_n >0$, the lefthand expression shows us that, if we choose $N$ large enough, then each of the (finitely many) $M^i$ can be made positive.
\end{proof}

Finally, we apply the above analysis to LDA-groups.

\begin{definition}
Given an LDA-group $G$, define
$$\rank(G)=\min\{\rank B\mid G(B)\cong G\}.$$
\end{definition}

\begin{definition}
For each $n\geq 1$, let $\mathcal{LDA}_n\subseteq\mathcal{LDA}$ be the standard Borel space of countable simple thick unital LDA-groups rank at most $n$.
Then let $\cong^{LDA}_n$ be the equivalence relation obtained by restricting $\cong_\mathcal{LDA}$ to $\mathcal{LDA}_n$.
\end{definition}

Theorem \ref{all three equivalent for thick unital} implies that the assignment $(V,E)\mapsto G(V,E)$ gives a Borel reduction from $BD_n$ to $\cong^{LDA}_n$, while the map described in the proof of Theorem \ref{LDA is more complex than all SDG_n} gives a Borel reduction from $\cong^{LDA}_n$ to $BD_n$.  Thus we have shown:

\begin{thm}
For each $n\geq 1$, $\left(\cong^{LDA}_n\right) \sim_B \left(BD_n\right)$.
\end{thm}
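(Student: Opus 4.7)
The plan is to establish Borel reductions in both directions, each respecting the rank-$n$ restriction. Both reductions appeal only to machinery already developed in the paper.

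For $BD_n \leq_B \left(\cong^{LDA}_n\right)$, I would use the Borel assignment $(V,E,d)\mapsto G(V,E,d)$ introduced before Definition~\ref{diagonal type}. This map is manifestly Borel once both sides are encoded as elements of $\mathcal{BD}$ and $\mathcal{LDA}$ respectively. By Corollary~\ref{all three equivalent for thick unital}, for thick simple unital Bratteli diagrams one has $B_1 \sim B_2 \iff G(B_1) \cong G(B_2)$, so this Borel map is in fact a reduction. The rank restriction transfers: if $B \in \mathcal{BD}_n$, then $B \sim B'$ with $\rank B' \leq n$, and $G(B) \cong G(B')$ then lies in $\mathcal{LDA}_n$ by definition of $\rank$ for LDA groups.

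For the reverse reduction $\left(\cong^{LDA}_n\right) \leq_B BD_n$, I would construct a Borel map $G \mapsto B_G$ that reads a Bratteli diagram off the chain structure of $G$. Given the Borel enumeration of $G$, condition (a) of the characterization of thick simple unital LDA-groups lets me Borel-measurably construct an exhausting increasing chain $G_0 \leq G_1 \leq \cdots$ of finite subgroups of $G$, each a direct product of alternating groups. Theorem~\ref{diagonal or regular} guarantees a subchain of diagonal or regular type; condition (b) excludes the regular case, so a diagonal-type subchain exists. Reading off the alternating factor sizes and inclusion incidences of such a chain yields a Bratteli diagram $B_G$ with $G(B_G) \cong G$. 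Any two such diagrams constructed from the same $G$ are $\sim$-equivalent by Corollary~\ref{all three equivalent for thick unital}, and non-isomorphic LDA-groups yield non-$\sim$-equivalent diagrams. Since $G \in \mathcal{LDA}_n$ means $G \cong G(B')$ for some $B'$ with $\rank B' \leq n$, one then obtains $B_G \sim B'$ and hence $B_G \in \mathcal{BD}_n$.

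The main obstacle will be showing the Borel measurability of the selection of a diagonal-type subchain, since the proof of Theorem~\ref{diagonal or regular} is not overtly uniform in $G$. I would sidestep this by selecting directly from the enumeration of $G$: at stage $n$, take the least (in some fixed Borel enumeration) finite subgroup of $G$ that contains $G_{n-1}$ and the $n$-th listed element of $G$, decomposes as a product of alternating groups, and contains $G_{n-1}$ via a diagonal inclusion. Diagonality is detectable measurably: by Lemma~\ref{zalesskii's lemma} and condition (b), it amounts to $G_{n-1}$ having no regular orbit on any alternating factor, which is a first-order condition on finitely many elements. Once the map $G \mapsto B_G$ is verified to be Borel and well-defined up to $\sim$, the theorem follows.
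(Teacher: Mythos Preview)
Your forward reduction $BD_n \leq_B \cong^{LDA}_n$ via $B \mapsto G(B)$ is exactly what the paper does. For the reverse direction, the paper's proof is a single clause: it cites ``the map described in the proof of Theorem~\ref{LDA is more complex than all SDG_n}.'' That proof, however, only explicitly builds Borel maps $\mathcal{SDG}_n \to \mathcal{BD} \to \mathcal{LDA}$, not a map out of $\mathcal{LDA}$; so the citation is elliptical, and your construction of $G \mapsto B_G$ by extracting a diagonal-type chain from the group structure is precisely the kind of argument needed to substantiate the paper's claim. In that sense your proposal is the paper's approach with the missing details supplied.

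One small correction: your claim that diagonality of $G_{n-1} \hookrightarrow G_n$ ``amounts to $G_{n-1}$ having no regular orbit on any alternating factor'' is not accurate---absence of regular orbits does not by itself force each nontrivial orbit to be a natural copy of a single factor. What is true (and what you actually need) is that diagonality of an inclusion between two explicitly given finite products of alternating groups is a finitary condition on finitely many permutations, hence Borel. A more substantive point to watch: your greedy selection must guarantee that the chosen $G_n$ can itself be continued diagonally at later stages. This does not follow merely from $G_{n-1} \hookrightarrow G_n$ being diagonal; you need to argue (using Lemma~\ref{zalesskii's lemma} and the existence of \emph{some} diagonal chain exhausting $G$) that any product of alternating groups sitting inside $G$ with large enough factors embeds diagonally into cofinally many terms of that chain, so that the greedy search never dead-ends. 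This is routine once observed, but should be stated.
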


\begin{corollary}
For each $n\geq 3$, $\left(\cong^{LDA}_n\right) <_B \left(\cong^{LDA}_{n+1}\right)$.
\end{corollary}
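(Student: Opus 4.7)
The plan is to derive the corollary as an immediate formal consequence of the theorem stated just above it together with Corollary \ref{main corollary}. The preceding theorem supplies Borel reductions in both directions between $\cong^{LDA}_n$ and $BD_n$ for every $n\geq 1$; concretely, the assignment $(V,E,d)\mapsto G(V,E,d)$ gives a Borel reduction $BD_n \leq_B \cong^{LDA}_n$ (with image landing in $\mathcal{LDA}_n$ by definition of $\rank G$), while the construction appearing in the proof of Theorem \ref{LDA is more complex than all SDG_n} yields a Borel reduction $\cong^{LDA}_n \leq_B BD_n$ (with image landing in $\mathcal{BD}_n$ since $\rank K_0(B) \leq \rank B$ suffices for the required inclusion). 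Meanwhile, Corollary \ref{main corollary} asserts $BD_n <_B BD_{n+1}$ for $n\geq 3$.

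First I would establish the easy direction $(\cong^{LDA}_n) \leq_B (\cong^{LDA}_{n+1})$ by composing a Borel reduction from $\cong^{LDA}_n$ to $BD_n$, the reduction $BD_n \leq_B BD_{n+1}$ from Corollary \ref{main corollary}, and a Borel reduction from $BD_{n+1}$ to $\cong^{LDA}_{n+1}$. Next, to obtain strictness, I would argue by contradiction: if $(\cong^{LDA}_{n+1}) \leq_B (\cong^{LDA}_n)$, then composing the reductions $BD_{n+1} \leq_B \cong^{LDA}_{n+1} \leq_B \cong^{LDA}_n \leq_B BD_n$ would produce $BD_{n+1} \leq_B BD_n$, directly contradicting Corollary \ref{main corollary}.

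There is essentially no obstacle beyond invoking transitivity of $\leq_B$ and the Borel bi-reducibility $(\cong^{LDA}_n) \sim_B (BD_n)$; both ingredients have been proved earlier in the paper, so the corollary follows purely formally.
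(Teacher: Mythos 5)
Your proposal is correct and follows exactly the route the paper intends: the corollary is an immediate formal consequence of the bi-reducibility $\left(\cong^{LDA}_n\right)\sim_B\left(BD_n\right)$ established in the preceding theorem together with $BD_n<_B BD_{n+1}$ from Corollary \ref{main corollary}, using transitivity of $\leq_B$ for both the positive direction and the contradiction argument for strictness.
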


Bratteli diagrams also characterize other naturally occurring structures, such as approximately finite-dimensional (AF) $C^*$-algebras and $AF$-relations on Cantor sets.

\begin{question}
For which other classes of structures that are described by Bratteli diagrams can we obtain a result similar to Theorem \ref{maintarget}?
\end{question}

Recall that $\cong_n$ is the isomorphism relation on the space of torsion free abelian groups of rank $n$.  Furthermore there is a universal countable Borel equivalence relation $E_\infty$.  That is if $E$ is countable Borel, then $E\leq_B E_\infty$.  In \cite{thomas-popa}, Thomas showed that $\left(\bigsqcup_{n\geq 1} \cong_n \right)<_B E_\infty$.
\begin{conjecture}
$\left(\bigsqcup_{n\geq 1} \cong_n^+ \right)<_B E_\infty$.
\end{conjecture}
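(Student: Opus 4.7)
The natural strategy is to produce a Borel reduction from $\bigsqcup_{n\geq 1} \cong_n^+$ to $\bigsqcup_{n\geq 1} \cong_n$ and then invoke the cited theorem of Thomas that $\bigsqcup_{n\geq 1} \cong_n <_B E_\infty$. Combined with the reduction $\bigsqcup_n \cong_n \leq_B \bigsqcup_n \cong_n^+$ that is immediate from Theorem \ref{RQn to SDGn+1}, such a reduction would in fact establish Borel bi-reducibility of the two sides, and the conjecture would follow.

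My plan is to encode the extra structure of a simple unital dimension group into a higher-rank torsion-free abelian group. Given $(A,A^+,u) \in \mathcal{SDG}_n$, Section \ref{Section-state spaces} tells us that $A^+$ is determined by the finite set of extreme states $E(S_u(A,A^+))$, which has size at most $n-1$. Each extreme state $p \colon A \to \mathbb{R}$ is a $\mathbb{Q}$-linear functional whose image $p(A) \subseteq \mathbb{R}$ is a $\mathbb{Q}$-subspace of dimension at most $n$. Taking a direct sum of $A$ with the images of the extreme states yields a torsion-free abelian group of rank bounded polynomially in $n$ that records both the underlying group $A$ and its positive cone, together with a designated vector marking the order unit $u$. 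I would then verify that the resulting assignment is a Borel reduction from $\cong_n^+$ to $\cong_{N(n)}$ for a suitable $N(n)$, the key point being that any isomorphism of the associated groups must respect the canonical decomposition arising from the extreme states, and hence descend to an isomorphism of the original dimension groups.

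The main difficulty I anticipate is making this encoding genuinely Borel and injective on isomorphism classes. One must canonically enumerate the extreme states so that isomorphisms of dimension groups correspond precisely to isomorphisms of the associated groups, which will require pinning down the summands $p_i(A)$ inside the associated group using marked elements and some canonical ordering (for instance, lexicographic with respect to the action on a fixed generating set of $A$, with ties broken by $u$). The case where the infinitesimal subgroup $I = \ker\theta$ is nonzero should be treated analogously to Case II of Section \ref{section-proof of main theorem}: first encode the simple quotient $(A/I, (A/I)^+, u+I)$ as above, and then adjoin the extension data $A \twoheadrightarrow A/I$ as a further torsion-free summand. If this direct encoding proves too fragile, a backup plan would be to apply the cocycle superrigidity machinery of Theorem \ref{cocycletheorem} directly to a hypothetical reduction $E_\infty \leq_B \cong_n^+$, mirroring the argument of Section \ref{section-proof of main theorem} but incorporating the state-space analysis in order to rule out that the positive cone data can absorb the full complexity of $E_\infty$.
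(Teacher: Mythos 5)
This statement is left as a \emph{conjecture} in the paper---no proof is given---so your proposal must stand entirely on its own, and its central step has a genuine gap. The reduction strategy itself (show $\bigsqcup_{n\geq 1}\cong_n^+\ \leq_B\ \bigsqcup_{n\geq 1}\cong_n$ and quote \cite{thomas-popa}) would indeed suffice, but the proposed encoding does not deliver such a reduction. Passing from the extreme states $p_i$ to their images $p_i(A)\subseteq\mathbb{R}$ and forming $A\oplus p_1(A)\oplus\cdots\oplus p_k(A)$ discards exactly the data that determines the cone, namely the functionals $p_i\colon A\to\mathbb{R}$ themselves: abstract isomorphism of a direct sum remembers neither homomorphisms, nor distinguished subgroups, nor a ``designated vector marking the order unit,'' since $\cong_N$ is plain group isomorphism and has no vocabulary for marked elements. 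The ``key point'' that any isomorphism of the associated groups must respect the canonical decomposition is unjustified and false in general: finite-rank torsion-free abelian groups fail Krull--Schmidt, direct decompositions are highly non-unique, and the summands $p_i(A)$ can be abstractly isomorphic to one another or to subgroups of $A$, so nothing forces an isomorphism to preserve them. Likewise there is no isomorphism-invariant way to enumerate the extreme states; a lexicographic order relative to a chosen generating set is not preserved under isomorphism, so the resulting assignment would not even be a homomorphism of equivalence relations, let alone injective on classes. If such an elementary coding of $(A,A^+,u)$ into a bare torsion-free group existed, the comparisons between $\cong_n^+$ and the relations $\cong_m$ carried out in Sections \ref{Section-state spaces}--\ref{section-proof of main theorem} would not need superrigidity at all, which should be a warning sign.

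The backup plan also misapplies the machinery: Theorem \ref{cocycletheorem} concerns cocycles defined on a standard Borel $\SL_n(\mathbb{Z})$-space equipped with an invariant ergodic probability measure, i.e.\ it is applied on the domain side via the spaces $(X_n,\mu_n,\sigma_n)$ of Theorem \ref{existence of probability space}. A hypothetical reduction $E_\infty\leq_B\bigsqcup_n\cong_n^+$ provides no such measured $\SL_n(\mathbb{Z})$-action; $E_\infty$ is realized by a free action of a free group, about which that theorem says nothing, and one must in any case handle all $n$ simultaneously on the target side. What a genuine proof would require is Popa-style superrigidity as in \cite{thomas-popa}, combined with an argument that the orbit relations of $\GL_n(\mathbb{Q})$ on $\mathcal{SDG}_n$ cannot absorb the relevant Bernoulli actions---plausibly the stabilizer/state-space analysis of Section \ref{section-proof of main theorem} could play the role that the corresponding rigidity step plays in Thomas's proof for $\bigsqcup_n\cong_n$, but that is a substantive project, not a consequence of the encoding you sketch.
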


It is natural to define the class of Bratteli diagrams of rank exactly $n$ as
$$\mathcal{BD}^*_n=\{B\in\mathcal{BD}\mid \sup\{\rank B' \text{ for }B'\sim B\}= n\},$$
and then to define $BD^*_n$ as $\sim\upharpoonright_{\mathcal{BD}^*_n}$.  It is easy to rewrite the proof of Corollary \ref{main corollary} to show that, for $n\geq 3$, $BD^*_{n+1}\nleq_B BD^*_n$.  However, the intuitively ``easy'' fact below is not currently known.
\begin{conjecture}
For $n\geq 1$, $\left(BD^*_{n}\right) \leq_B \left(BD^*_{n+1}\right)$.  Thus for $n\geq 3$, \mbox{$\left(BD^*_{n}\right) <_B \left(BD^*_{n+1}\right)$.}
\end{conjecture}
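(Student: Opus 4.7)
The plan is to mimic the Borel reduction $f_n \colon \mathcal{SDG}_n \to \mathcal{SDG}_{n+1}$ of Theorem \ref{SDGn to SDGn+1} at the level of Bratteli diagrams. Given $B = (V, E, d) \in \mathcal{BD}^*_n$, I construct $\phi(B) = (V', E', d')$ by attaching an extra ``strand'' of vertices: at each level $n$, put $V'_n = V_n \sqcup \{w_n\}$, keep all old edges, and add an edge $w_{n-1} \to w_n$ of multiplicity one, together with an edge $v \to w_n$ of multiplicity one for each $v \in V_{n-1}$ and an edge $w_{n-1} \to v$ of multiplicity one for each $v \in V_n$. Set $d'(v) = d(v)$ for $v \in V$ and $d'(w_n) = d'(w_{n-1}) + \sum_{v \in V_{n-1}} d(v)$, so that $(\dag)$ holds with equality at each $w_n$; the diagram is unital since $B$ is, and the bidirectional edges between the strand and $B$, combined with simplicity of $B$, rule out any proper ideal in $\phi(B)$. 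This construction is manifestly Borel.

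The next step is to identify $K_0(\phi(B)) \cong^+ \tilde{f}(K_0(B))$, where $\tilde{f}$ extends the map $f_n$ of Theorem \ref{SDGn to SDGn+1} to dimension groups of all ranks $k \geq 1$, namely $\tilde{f}((A, A^+, u)) = (A \oplus \mathbb{Q}, \tilde{B}^+, (u, 1))$ with $\tilde{B}^+ = \{(a, q) : a \in A^+ \setminus \{0\},\; q > 0\} \cup \{(0,0)\}$. (For the rank-$1$ case, one checks directly that $\tilde{f}(A)$ is still a dimension group and that $\tilde{f}$ is injective on $\cong^+$-classes, so the essential conclusion of Theorem \ref{SDGn to SDGn+1} still holds.) The identification is a straightforward direct-limit calculation: the new vertex $w_n$ contributes the rank-$1$ summand $\mathbb{Q}$, while the chosen incidence pattern forces the positive cone to be $\tilde{B}^+$. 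Combining this with Theorems \ref{Bratelli<DG} and \ref{SDGn to SDGn+1}, one obtains that $\phi(B) \sim \phi(B')$ iff $\tilde{f}(K_0(B)) \cong^+ \tilde{f}(K_0(B'))$ iff $K_0(B) \cong^+ K_0(B')$ iff $B \sim B'$, so $\phi$ is a Borel reduction at the level of $\sim$-classes.

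The remaining and main obstacle is to verify that $\phi(B) \in \mathcal{BD}^*_{n+1}$, i.e.\ that $\sup\{\rank B'' : B'' \sim \phi(B)\} = n+1$. The inequality $\geq n+1$ is the easy half: given any realization $B^* \sim B$ with $\rank B^* = n$, applying the same strand-attaching construction to $B^*$ yields a Bratteli diagram $\sim \phi(B)$ of rank exactly $n+1$. The difficult half is $\leq n+1$: one must show that no Bratteli diagram $B''$ realizing $\tilde{f}(K_0(B))$ can have $\liminf_n |V''_n| > n+1$. The natural strategy is to argue that any such $B''$ can be telescoped, or otherwise canonically reshaped, so as to peel off the rank-$1$ summand corresponding to the $\mathbb{Q}$-factor of $\tilde{f}(K_0(B))$, producing a Bratteli diagram $B''' \sim B$ with $\rank B''' \leq \rank B'' - 1 \leq n$, contradicting $B \in \mathcal{BD}^*_n$.

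The delicate point is that this decomposition may not exist on the nose: the ``$\mathbb{Q}$-direction'' could be genuinely entangled across many vertices of $B''$. Overcoming this would presumably require an ordered $K$-theoretic analysis of how the infinitesimal subgroup and the state space of $\tilde{f}(K_0(B))$ constrain the incidence matrices of its realizations, perhaps using the fact that the extremal boundary of $S_{(u,1)}(\tilde{f}(K_0(B)))$ contains a distinguished ``pure $\mathbb{Q}$'' state which detects the rank-$1$ summand. Once the $\leq n+1$ bound is established, combining with the observation immediately preceding the conjecture that $BD^*_{n+1} \not\leq_B BD^*_n$ for $n \geq 3$ yields the claimed strict inequality.
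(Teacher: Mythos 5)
You are attempting to prove what the paper explicitly leaves open: the text immediately before this conjecture says the ``intuitively easy'' reduction $BD^*_n\leq_B BD^*_{n+1}$ is \emph{not currently known}, so there is no proof in the paper to compare with, and your proposal does not supply one. The entire content of the conjecture is the step you defer, namely that $\phi(B)$ lies in $\mathcal{BD}^*_{n+1}$, i.e.\ that $\sup\{\rank B'' : B''\sim\phi(B)\}$ is at most (indeed, is even finite and equal to) $n+1$. Your ``peel off the $\mathbb{Q}$-summand'' strategy is precisely the hard part: realizations of a fixed dimension group need not respect any direct-sum decomposition, the rank of a realization can strictly exceed $\rank K_0$ (Elliott's $\mathbb{Z}[\frac{1}{3}]\oplus\mathbb{Z}$ example quoted in Section \ref{section-application to LDA groups}), and the supremum over a single $\sim$-class can even be infinite. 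For instance, taking $k$ vertices at every level with incidence matrices $c_mJ_k$, where $J_k$ is the all-ones matrix and the $c_m$ enumerate the primes with repetition, gives unital simple thick Bratteli diagrams of every rank $k$ whose $K_0$ is $(\mathbb{Q},\mathbb{Q}^+,1)$ up to isomorphism; by Theorem \ref{Bratelli<DG} these all lie in one $\sim$-class, whose rank-supremum is therefore infinite. So even the finiteness of the supremum for realizations of $\tilde f(K_0(B))$ requires an argument, and ``presumably requires an ordered $K$-theoretic analysis'' is a restatement of the problem, not a proof.

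There is also a concrete error in the construction itself: with multiplicity-one edges in both directions, $K_0(\phi(B))$ is generally \emph{not} $\tilde f(K_0(B))$. Take $B$ stationary with one vertex per level and two parallel edges, so $K_0(B)=\mathbb{Z}[\frac{1}{2}]$. Then $\phi(B)$ is stationary with incidence matrix $\begin{pmatrix}2&1\\1&1\end{pmatrix}\in\GL_2(\mathbb{Z})$, so every connecting map is an isomorphism and the limit group is just $\mathbb{Z}^2$ (no divisible summand appears at all); moreover this matrix is primitive with irrational Perron eigenvalue, so the limit has a unique state, whereas $\tilde f(\mathbb{Z}[\frac{1}{2}])$ has two extreme states, namely $(a,q)\mapsto q$ and the state factoring through $A$. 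Hence $K_0(\phi(B))\ncong^+\tilde f(K_0(B))$, and both the identification and the ``easy half'' rank computation (including $\phi(B^*)\sim\phi(B)$, which you deduce from that identification) collapse as written. This could be repaired by an Effros--Handelman--Shen-type construction with rapidly growing multiplicities on the new strand, arranged so that the extra extreme state survives as the pure projection onto the $\mathbb{Q}$-coordinate, but such a repair does nothing toward the main gap above.
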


Also, given the results of Section \ref{section - two borel reductions and two small cases}, the following seems plausible:
\begin{conjecture}
For $n\geq 1$, $\left(\cong_n\right) \sim_B \left(\cong_{n+1}^+\right)$
\end{conjecture}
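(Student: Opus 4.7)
Theorem~\ref{RQn to SDGn+1} already supplies the reduction $(\cong_n) \leq_B (\cong_{n+1}^+)$, so the entire content of the conjecture is the reverse reduction $(\cong_{n+1}^+) \leq_B (\cong_n)$. My plan is to construct a Borel map $\Phi\colon \mathcal{SDG}_{n+1} \to R(\mathbb{Q}^n)$ that assigns to each simple unital rank-$(n+1)$ dimension group a rank-$n$ torsion-free abelian group encoding its full isomorphism type. I would assemble $\Phi$ stratum by stratum, the strata being indexed by the rank $r$ of the infinitesimal subgroup $I = \infin(A, A^+, u)$; by Theorem~\ref{nature of theta} this rank lies in $\{0,1,\ldots,n\}$ and is a Borel $\cong_{n+1}^+$-invariant.

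On the ``top'' stratum $r = n$, the quotient $A/I$ has rank $1$, so it embeds in $\mathbb{Q}$, and (after adjusting by an element of $\GL_{n+1}(\mathbb{Q})$) the dimension group $(A,A^+,u)$ falls precisely in the image of the reduction $g_n$ from Theorem~\ref{RQn to SDGn+1}. On this stratum I would set $\Phi(A,A^+,u)=I$, and verify the required biconditional by essentially inverting the argument already inside the proof of Theorem~\ref{RQn to SDGn+1}. On the intermediate strata $0 < r < n$, I would combine the rank-$r$ group $I$ with a canonical encoding of the rank-$(n{+}1{-}r)$ ordered quotient $(A/I,C^+,v)$ and of the extension class in $\mathrm{Ext}^1(A/I,I)$, using a ``pushout'' analogue of $g_n$. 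The goal is to produce a single rank-$n$ group in $R(\mathbb{Q}^n)$ whose isomorphism type recovers the triple (ideal, quotient, extension), and hence by Corollary~\ref{extreme points determines A^+} the full triple $(A,A^+,u)$.

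The principal obstacle is the bottom stratum $r = 0$, where $I=\{0\}$ and the entire content of $(A,A^+,u)$ is the rank-$(n{+}1)$ subgroup $\theta(A)$ of the $k$-dimensional real space $\Aff(S_u(A,A^+))$, $k\le n$, together with the positive cone cut out by positivity of affine functions. Here there is no ``extra'' rank to shed, and the obstruction is essentially geometric: one must Borel-reduce the moduli of rank-$(n{+}1)$ dense subgroups of $\mathbb{R}^k$, with their induced simplicial order, to plain rank-$n$ TFA groups. My proposed attack is to pick an extreme-state basis $p_1,\ldots,p_k$, normalise $\theta(u)=(1,\ldots,1)$ via the diagonal $\GL_k(\mathbb{R})$-action, and project $\theta(A)$ along any single coordinate; since $\theta$ is injective, the resulting rank-$n$ image group in $\mathbb{R}^{k-1}$ (viewed as a subgroup of $\mathbb{Q}^n$ after rationalising its span) should retain enough information to reconstruct $\theta(A)$ up to the residual finite-state permutation symmetry.

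The hardest step will be verifying that, once the three strata are glued, the fibres of $\Phi$ are \emph{exactly} the $\cong_{n+1}^+$-classes: on each stratum the argument is plausible, but controlling boundary behaviour between strata and showing that no spurious identifications are introduced by the normalisation choices in the $r=0$ case is where I expect the conjecture to stand or fall. In view of the interplay with Thomas' cocycle machinery in Section~\ref{section-cocycles}, I anticipate that finishing the $r=0$ case may require an independent cocycle reduction argument, applying Theorem~\ref{cocycletheorem} to trivialise the residual $\GL_k(\mathbb{R})$-symmetry on an invariant measure-one subset and then promoting that to a Borel reduction via an exhaustion argument.
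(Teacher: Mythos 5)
The statement you are addressing is posed in the paper only as a conjecture: the paper proves the easy direction $(\cong_n)\leq_B(\cong_{n+1}^+)$ via Theorem \ref{RQn to SDGn+1} and offers no proof of the reverse reduction, which is the entire content of the conjecture; so there is no paper proof to match, and your plan must stand on its own. It does not. On the stratum where $\rank I=n$ you claim that, after adjusting by an element of $\GL_{n+1}(\mathbb{Q})$, the dimension group lies in the image of $g_n$ and you may set $\Phi(A,A^+,u)=I$. But the image of $g_n$ consists only of groups that split as $G\oplus\mathbb{Q}$ with divisible rank-one quotient and order unit $(0,1)$; a general $(A,A^+,u)$ with $\rank I=n$ need not split over $I$, its quotient $A/I$ can be an arbitrary rank-one group (e.g.\ $\mathbb{Z}[\frac{1}{2}]$), and no rational change of coordinates alters the isomorphism type of $A/I$ or the extension class. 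Hence setting $\Phi=I$ identifies non-isomorphic dimension groups already on this ``easy'' stratum, and on the intermediate strata you give no actual construction at all: encoding the triple (infinitesimal subgroup, ordered quotient, class in $\mathrm{Ext}^1(A/I,I)$) Borel-invariantly into a single group of rank exactly $n$ is precisely the missing idea, not a routine ``pushout analogue of $g_n$.''

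The bottom stratum is where the proposal genuinely fails. When $I=\{0\}$ the map $\theta$ is injective, so $\theta(A)\subset\Aff S_u(A,A^+)\cong\mathbb{R}^k$ has rank $n+1$; projecting along one coordinate does not shed rank, since generically the kernel line meets $\theta(A)$ only in $0$, so the image still has rank $n+1$ and cannot be placed in $R(\mathbb{Q}^n)$, while ``rationalising the span'' simply returns the abstract group $A$ and discards exactly the data (the embedding given by the states) that determines $A^+$ by Theorem \ref{nature of theta}. Finally, Theorem \ref{cocycletheorem} cannot rescue this step: it requires the target to be a subgroup of the rational points of an algebraic $\mathbb{Q}$-group of dimension less than $n^2-1$, not a real group such as $\GL_k(\mathbb{R})$, and it yields conclusions only almost everywhere with respect to an invariant measure. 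In this paper, as in Thomas' work, such measure-theoretic rigidity is used to \emph{refute} Borel reductions; it cannot be ``promoted by exhaustion'' to an everywhere-defined Borel reduction, because a Borel reduction may not discard null sets and there is no invariant measure on all of $\mathcal{SDG}_{n+1}$ against which to exhaust. The conjecture therefore remains open, and your outline does not close it.
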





\end{document}